\let\amsthm@proof\proof
\let\amsthm@endproof\endproof
  \let\proof\amsthm@proof
  \let\endproof\amsthm@endproof
\DeclareMathOperator{\Av}{Av}
\newcommand\orange[1]{\textcolor{orange}{\bf{#1}}}
\newcommand{\card}[1]{\left|#1\right|}
\newcommand\cB{\mathcal{B}}
\newcommand{\dash}{\text{-}}
\newcommand{\ds}{\displaystyle}
\newcommand{\EF}{\E F}
\DeclareMathOperator{\Em}{Em}
\newcommand{\E}{\mathbb{E}}
\newcommand{\ET}{\E T}
\newcommand{\EI}{\E I}
\newcommand\Var{\mathbb{V}}
\newcommand{\F}{F}
\renewcommand{\S}{\mathcal S}
\DeclareMathOperator{\st}{st}
\newcommand{\T}{T}
\newcommand{\I}{I}
\newcommand{\VT}{\Var T}
\newcommand{\X}{\mathbf{X}}
\newcommand{\comp}[1]{\overline{#1}}
\newcommand{\tie}{\cellcolor{blue}$\equiv$}
\newcommand{\win}{\cellcolor{cyan}$>$}
\newcommand{\loss}{\cellcolor{magenta}$<$}
\newcommand{\rot}[1]{\rotatebox{90}{\textcolor{magenta}{#1}}}
\newcommand{\cya}[1]{\textcolor{cyan}{#1}}
\newcommand{\ul}{\underline}
\newcommand{\ol}{\overline}
\newtheorem{theorem}{Theorem}[section]
\newtheorem{conjecture}[theorem]{Conjecture}  
\newtheorem{corollary}[theorem]{Corollary}
\newtheorem{lemma}[theorem]{Lemma}
\newtheorem{observation}[theorem]{Observation}
\newtheorem{proposition}[theorem]{Proposition}
\newtheorem{problem}[theorem]{Problem}  
\theoremstyle{definition}
\newtheorem*{example}{Example}
\author{Sergi Elizalde\thanks{Partially supported by Simons Collaboration Grant \#929653.}
  \and Yixin Lin}
\title{Penney's game for permutations}
\affiliation{
  Department of Mathematics, Dartmouth College, Hanover, NH, US
  }
\keywords{Penney's game, permutation, consecutive pattern, non-transitive game}
\begin{document}
\publicationdata{vol. 28:1, Permutation Patterns 2025}{2026}{3}{10.46298/dmtcs.16361}{2025-08-18; 2025-08-18; 2026-04-29}{2026-04-30}
\maketitle
\begin{abstract}
We consider a permutation analogue of Penney's game for words. Two players, in order, each choose a permutation of length $k\ge3$; then a sequence of independent random values from a continuous distribution is generated, until the relative order of the last $k$ numbers coincides with one of the chosen permutations, making that player the winner.
 We compute the winning probabilities for all pairs of permutations of length $3$ and some pairs of length $4$, showing that, as in the original version for words, the game is non-transitive. Our proofs introduce new bijections for consecutive patterns in permutations. We also give some formulas to compute the winning probabilities more generally, and conjecture a winning strategy for the second player when $k$ is arbitrary.
\end{abstract}

\section{Introduction}

In Penney's game, also known as Penney Ante~\cite{Penney}, player~A selects a binary word of length $k\ge3$, then player~B selects another binary word of the same length. A fair coin is tossed repeatedly to determine a random binary word, until one of the players' words appears as a consecutive subword, making that player the winner.
It is known that, for any word picked by player~A, player~B can always pick a word that will be more likely to appear first. This property makes Penney's game a non-transitive game.

The exact odds of winning can be computed using Conway's formula~\cite{Conway}, in terms of the overlaps of the two words with themselves and with each other. The same method applies to words over any finite alphabet. 
Guibas and Odlyzko provided a proof of Conway's formula using generating functions~\cite{GuibasOdlyzko}, as well as a winning strategy for player~B, by describing how to choose a word that will be more likely to appear first than player~A's word.
Another proof of Conway's formula, using induction, was given by Collings~\cite{CollingsProof} soon after.
Extending Guibas and Odlyzko's approach, Felix~\cite{Felix} characterized the words that maximize the winning probabilty for player~B, given the word chosen by player~A, over an alphabet of arbitrary size.

In this paper, we consider an analogue of Penney's game for permutations. Now player~A selects a permutation of length $k\ge3$, then player~B selects another permutation of the same length. Then a sequence of independent random numbers is generated until the relative order of the $k$ most recent numbers agrees with one of the players' permutations, making that player the winner.

More specifically, let $\X=X_1,X_2,\dots$ be a sequence of i.i.d.\ continuous random variables; for example, we can assume without loss of generality that they have a uniform distribution in $[0,1]$. Let $\S_k$ denote the set of permutations of $[k]\coloneqq\{1,2,\dots,k\}$.
A {\em consecutive occurrence} of a permutation (often called a {\em pattern}) $\sigma\in\S_k$ is a subsequence $X_i,X_{i+1},\dots,X_{i+k-1}$ whose entries are in the same relative order as $\sigma_1,\sigma_2,\dots,\sigma_k$, that is, $\st(X_iX_{i+1} \dots X_{i+k-1})=\sigma$, where $\st$ denotes the standardization operation that replaces the smallest entry with $1$, the second smallest with $2$, etc.
After player~A chooses a permutation $\sigma$ and player~B chooses a permutation $\tau$, the random variables $X_1,X_2,\dots$ are drawn until a consecutive occurrence of $\sigma$ or $\tau$ appears, which determines the winner.
One can consider a more general version where $\sigma$ and $\tau$ may have different lengths. Using continuous random variables ensures that the event $X_i=X_j$ (where $i\neq j$) has probability zero.

In Section~\ref{Sec:ExpectedTime}, we provide a simple expression for the expected time until a given pattern appears for the first time.
In Section~\ref{Sec:GeneralProbability}, we study the probability that one pattern appears before another. We give a formula for this probability in terms of expectations, inspired in Conway's formula but significantly less practical, and another one that relates to permutation enumeration. 
We then explicitly compute this probability in some special cases, and we describe several families of pairs of patterns for which this probability is $1/2$, i.e., both patterns have equal probability of appearing first.
In Section~\ref{Sec:Length3}, we provide a complete table for the probabilities of seeing one pattern before another when both patterns have length~3, computed using bijections and recurrences. 
In Section~\ref{Sec:Length4}, we consider some pairs patterns of length~4, including the curious case of the patterns $2134$ and $3241$,
each of which has equal probability of appearing before the other, but for non-obvious reasons. Our proof of this fact introduces some new bijections for consecutive patterns in permutations, in combination with an inclusion-exclusion argument.
In Section~\ref{Sec:Conjecture}, we conclude that Penney's game for permutations is non-transitive. We conjecture that for any permutation $\sigma_1\dots\sigma_{k-1}\sigma_k\in\S_k$, where $k\ge3$, the permutation $\sigma_k\sigma_1\dots\sigma_{k-1}$ is more likely to appear first, which would provide
a winning strategy for player~B for permutations of arbitrary length.


\section{The first occurrence of a pattern}
\label{Sec:ExpectedTime}

In this section we analyze how long it takes for given pattern to appear for the first time in the random sequence $\X$. The analogous problem for words over a finite alphabet was studied by Nielsen~\cite{PNielsen}, motivated by frame synchronization methods in digital communication systems.

Given $\pi\in\S_n$ and $\sigma\in\S_k$, an occurrence of $\sigma$ in $\pi$ as a consecutive pattern is a subsequence of adjacent entries of $\pi$ that are in the same relative order as the entries of $\sigma$. For example, occurrences of $132$ and $231$ are often called peaks, and occurrences of $321$ are called double descents. If $\pi$ does not contain any occurrences of $\sigma$, we say that $\pi$ {\em avoids} $\sigma$. See~\cite{Elizalde_survey} for a survey of consecutive patterns in permutations.
Unless otherwise stated, all the notions of patterns, occurrences, containment and avoidance in this paper refer to the consecutive case. However, we will see later that the definitions and results in this section extend immediately to so-called {\em vincular patterns}.

Let $\alpha_n(\sigma)$ be the number of permutations in $\S_n$ that avoid $\sigma$ as a consecutive pattern, and denote the corresponding exponential generating function by \[P_\sigma(z)=\sum_{n\ge0}\alpha_n(\sigma)\frac{z^n}{n!}.\]  Define the random variable $\T_\sigma$ as the smallest $j$ such that $\textbf X=X_1,\dots,X_j$ contains a consecutive occurrence of $\sigma$. In other words, $\T_\sigma$ is the number of random draws until we see $\sigma$ for the first time. Our first result is that the expectation $\ET_\sigma$ and the variance $\VT_\sigma$ have surprisingly simple expressions in terms of this generating function.

\begin{theorem}\label{thm:ET}
	For every $\sigma$, \begin{align*} 
		\ET_\sigma&=P_\sigma(1),\\
		\VT_\sigma&=2 P'_\sigma(1)+ P_\sigma(1)- P_\sigma(1)^2.
	\end{align*}
\end{theorem}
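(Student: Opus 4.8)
The plan is to express the distribution of $\T_\sigma$ through the avoidance sequence $\alpha_n(\sigma)$, then extract the moments. The key observation is that the event $\{\T_\sigma > n\}$ says precisely that the first $n$ random draws $X_1,\dots,X_n$ contain no consecutive occurrence of $\sigma$. Because the $X_i$ are i.i.d.\ continuous, their standardization $\st(X_1\dots X_n)$ is uniformly distributed over $\S_n$, so $\P(\T_\sigma > n)=\alpha_n(\sigma)/n!$. This is the crux: it converts a probabilistic tail into a purely enumerative quantity, and it is exactly the quantity that $P_\sigma(z)$ packages. (Note $\alpha_0(\sigma)=\alpha_1(\sigma)=\cdots=\alpha_{k-1}(\sigma)=1$ trivially, since no pattern of length $k$ can occur before we have $k$ draws, consistent with $\P(\T_\sigma>n)=1$ for $n<k$.)

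First I would establish this tail formula carefully, invoking the uniformity of the standardization. Then I would compute the expectation via the standard tail-sum identity for nonnegative integer-valued random variables,
\begin{equation*}
\ET_\sigma=\sum_{n\ge0}\P(\T_\sigma>n)=\sum_{n\ge0}\frac{\alpha_n(\sigma)}{n!}=P_\sigma(1),
\end{equation*}
which immediately gives the first formula. The main thing to verify here is convergence: one needs $\alpha_n(\sigma)/n!\to 0$ fast enough, which follows because the probability of avoiding a fixed pattern of length $k$ decays (at least geometrically in blocks of length $k$), guaranteeing $P_\sigma(1)<\infty$ and justifying the interchange of summation.

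For the variance, I would use the companion tail identity $\E[\T_\sigma(\T_\sigma-1)]=\sum_{n\ge0}2n\,\P(\T_\sigma>n)$, or equivalently work with $\E\binom{\T_\sigma}{2}$. Differentiating the generating function gives $P'_\sigma(z)=\sum_{n\ge1}\alpha_n(\sigma)\frac{z^{n-1}}{(n-1)!}=\sum_{n\ge1}n\,\alpha_n(\sigma)\frac{z^{n-1}}{n!}$, so that
\begin{equation*}
2P'_\sigma(1)=\sum_{n\ge0}2n\,\frac{\alpha_n(\sigma)}{n!}=\sum_{n\ge0}2n\,\P(\T_\sigma>n)=\E[\T_\sigma^2]-\E[\T_\sigma].
\end{equation*}
Then $\VT_\sigma=\E[\T_\sigma^2]-(\ET_\sigma)^2=\big(2P'_\sigma(1)+\ET_\sigma\big)-(\ET_\sigma)^2=2P'_\sigma(1)+P_\sigma(1)-P_\sigma(1)^2$, which is the claimed expression.

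The genuinely substantive step is the reduction $\P(\T_\sigma>n)=\alpha_n(\sigma)/n!$; once that is in hand, everything else is a mechanical application of tail-sum moment formulas and the observation that evaluating $P_\sigma$ and $P_\sigma'$ at $z=1$ reproduces exactly the sums $\sum \alpha_n/n!$ and $\sum n\,\alpha_n/n!$. I expect the only real care needed beyond that reduction is the convergence argument for $P_\sigma(1)$ and $P'_\sigma(1)$, which I would dispatch by noting the geometric decay of avoidance probabilities over disjoint length-$k$ windows; this simultaneously justifies that both moments are finite and that the term-by-term manipulations are legitimate.
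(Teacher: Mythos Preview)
Your proposal is correct and follows essentially the same approach as the paper: both identify $\Pr(\T_\sigma>n)=\alpha_n(\sigma)/n!$ and then apply tail-sum identities to recover the first two moments from $P_\sigma$ and $P'_\sigma$ evaluated at $1$. The only cosmetic difference is that the paper computes $\E[\T_\sigma^2]$ directly via $\sum_{i\ge1}(2i-1)\Pr(\T_\sigma\ge i)$, whereas you route through $\E[\T_\sigma^2]-\E[\T_\sigma]=\sum_{n\ge0}2n\,\Pr(\T_\sigma>n)$; your added remarks on convergence are a nice touch that the paper omits.
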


\begin{proof}
	Let $A_j$ be the event that $X_1,X_2,\dots,X_j$ avoids the pattern $\sigma$. Then \[\Pr(T_\sigma\ge i)=\Pr(A_{i-1})=\frac{\alpha_{i-1}(\sigma)}{(i-1)!},\] from where 
 \[\ET_\sigma=\sum_{i\ge1}\Pr(T_\sigma\ge i)=\sum_{i\ge1}\frac{\alpha_{i-1}(\sigma)}{(i-1)!}=P_\sigma(1).\]
Similarly,
\[\ET_\sigma^2=\sum_{i\ge 1} (2i-1) \Pr(T_\sigma\ge i)=\sum_{n\ge0} (2n+1) \frac{\alpha_n(\sigma)}{n!}
=2 P'_\sigma(1)+ P_\sigma(1),\]
and so 
\[\VT_\sigma =\ET_\sigma^2-(\ET_\sigma)^2=2 P'_\sigma(1)+ P_\sigma(1)-P_\sigma(1)^2.\qedhere\]
\end{proof}

Expressions for the generating function $P_\sigma(z)$ for various patterns $\sigma$ have been obtained by Elizalde and Noy~\cite{ElizaldeNoy4, ElizaldeNoy5}. In particular, for patterns of length~3, \cite[Theorem 4.1]{ElizaldeNoy4} gives
\begin{equation}\label{eq:P123}
P_{123}(z)=\frac{\sqrt{3}}{2}\frac{e^{z/2}}{\cos(\frac{\sqrt{3}}{2}z+\frac{\pi}{6})},\qquad
P_{132}(z)=\frac{1}{1-\int_0^z e^{-t^2/2}\,dt}.
\end{equation}
For an arbitrary monotone pattern, the following formula appears in~\cite{DavidBarton}:
\begin{equation}\label{eq:Pmonotone} P_{12\dots k}(z)=\left(\sum_{j\ge0}\frac{z^{jk}}{(jk)!}-\sum_{j\ge0}\frac{z^{jk+1}}{(jk+1)!}\right)^{-1}.
\end{equation}

Using these expressions, it follows from Theorem~\ref{thm:ET} that, for example, 
\begin{align}\label{eq:ET123}
   & \ET_{12}=e, && \ET_{123}=\frac{\sqrt{3e}}{2\cos(\frac{\sqrt{3}}{2}+\frac{\pi}{6})}\approx 7.924, && \ET_{132}=\frac{1}{1-\int_0^1 e^{-t^2/2} dt}\approx 6.926,\\
   \nonumber
   & \VT_{12}=3e-e^2, && \VT_{123}\approx 27.981, && \VT_{132}\approx 17.148.
\end{align}

Recall that two  patterns $\sigma$ and $\tau$ are said to be {\em Wilf-equivalent} if
$P_\sigma(z)=P_\tau(z)$.

\begin{corollary}\label{cor:ET}
	If $\sigma$ and $\tau$ are Wilf-equivalent, then the random variables $\T_\sigma$ and $\T_\tau$ have the same distribution. In particular, $\ET_\sigma=\ET_\tau$ and $\VT_\sigma=\VT_\tau$.
\end{corollary}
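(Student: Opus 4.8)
The plan is to reduce the entire corollary to the single hypothesis $P_\sigma(z)=P_\tau(z)$, since everything we need is already packaged in Theorem~\ref{thm:ET} and its proof. The key observation is that the distribution of $\T_\sigma$ is determined \emph{entirely} by the counting sequence $\alpha_n(\sigma)$, hence by the generating function $P_\sigma(z)$. I would therefore first recall, straight from the proof of Theorem~\ref{thm:ET}, the tail formula
\[\Pr(\T_\sigma\ge i)=\frac{\alpha_{i-1}(\sigma)}{(i-1)!}\]
valid for every $i\ge1$. This is the crux: it exhibits the cumulative distribution function of $\T_\sigma$ as a function of the coefficients $\alpha_n(\sigma)$ alone, with no further dependence on $\sigma$.

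From here the argument is essentially a two-line deduction. Wilf-equivalence means $P_\sigma(z)=P_\tau(z)$, and since these are exponential generating functions, equating coefficients of $z^n/n!$ gives $\alpha_n(\sigma)=\alpha_n(\tau)$ for all $n\ge0$. Substituting into the tail formula yields $\Pr(\T_\sigma\ge i)=\Pr(\T_\tau\ge i)$ for every $i$, so $\T_\sigma$ and $\T_\tau$ have identical tail distributions and therefore identical distributions (both being $\mathbb{Z}_{\ge0}$-valued). The equalities $\ET_\sigma=\ET_\tau$ and $\VT_\sigma=\VT_\tau$ then follow either as an immediate consequence of equal distributions, or directly by plugging $P_\sigma(z)=P_\tau(z)$ into the formulas of Theorem~\ref{thm:ET}; I would mention both routes since the ``In particular'' phrasing suggests the moment statements are meant to be read off the distributional one.

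There is no real obstacle here, which matches the status of this statement as a corollary: the entire content was already established in proving Theorem~\ref{thm:ET}, and the only thing to verify is that the tail probabilities depend on $\sigma$ solely through $\alpha_n(\sigma)/n!$. The one point worth stating carefully is the passage from equal EGFs to equal coefficient sequences—this is just the uniqueness of power-series coefficients, but it is the logical hinge of the proof and deserves an explicit sentence. Everything else is bookkeeping.
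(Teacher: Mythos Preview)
Your proposal is correct and follows essentially the same approach as the paper: both extract $\alpha_n(\sigma)=\alpha_n(\tau)$ from Wilf-equivalence and plug into the tail/point-mass formula from the proof of Theorem~\ref{thm:ET} to conclude equality of distributions, with the moment equalities following either from this or directly from the formulas in Theorem~\ref{thm:ET}. The only cosmetic difference is that the paper writes out $\Pr(\T_\sigma=i)$ as a difference of two tail probabilities, whereas you work with the tails $\Pr(\T_\sigma\ge i)$ directly; these are equivalent formulations.
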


\begin{proof}
    The fact that $\ET_\sigma=\ET_\tau$ and $\VT_\sigma=\VT_\tau$ follows immediately from Theorem~\ref{thm:ET}. More generally, we have $$\Pr(T_\sigma=i)=\Pr(T_\sigma\ge i)-\Pr(T_\sigma\ge i-1)=
    \frac{\alpha_{i-1}(\sigma)}{(i-1)!}-\frac{\alpha_{i-2}(\sigma)}{(i-2)!}.
    $$
    Since $\sigma$ and $\tau$ are Wilf-equivalent, we have $\alpha_n(\sigma)=\alpha_n(\tau)$ for all $n$, and so the above expression equals $\Pr(T_\tau=i)$.
\end{proof}

To study the analogue for permutations of Penney's game, as described above, consecutive patterns provide the suitable setting.
However, the definition of $\T_\sigma$, as well as Theorem~\ref{thm:ET} and Corollary~\ref{cor:ET}, can easily be extended to the case where $\sigma$ is a vincular or a classical pattern. 
A vincular pattern (called generalized pattern in~\cite{BabsonSteingrimsson}) is a permutation where dashes may be inserted between some of its entries, indicating that the corresponding positions do not need to be adjacent in an occurrence of the pattern in a permutation. For example, an occurrence of the vincular pattern $1\dash32$ in $\pi$ is any subsequence $\pi_i\pi_j\pi_{j+1}$ with $i<j$ and $\pi_i<\pi_{j+1}<\pi_j$. Vincular patterns with dashes in all positions are called classical patterns, which have no adjacency requirement, and appear most frequently in the combinatorics literature~\cite{SimionSchmidt}.

In its extended version, Theorem~\ref{thm:ET} can be used compute that the expected number of draws before the first occurrence of the vincular pattern $1\dash23$ is 
\[\ET_{1\dash23}=\sum_{n\ge0} \frac{B_n}{n!}=e^{e-1}\approx5.575,\]
where $\alpha_n(1\dash23)=B_n$ is the $n$th Bell number~\cite{Claesson}. And if $\sigma$ is any classical pattern of length $3$, this expected number is 
\[\ET_{\sigma} = \sum_{n\ge0} \frac{C_n}{n!}=e^2I_2(2)\approx5.091,\] 
where $\alpha_n(\sigma)=C_n$ is the $n$th Catalan number~\cite{SimionSchmidt}, and $I_\nu(z)$ denotes the modified Bessel function of the first kind. 

For any vincular pattern $\sigma$, the value $\ET_\sigma$ gives a measure of how easy it is to avoid $\sigma$ in a random permutation. Unlike other measures 
commonly used in the literature, such as $\lim_n \alpha_n(\sigma)^{1/n}$ for classical patterns and $\lim_n \left(\frac{\alpha_n(\sigma)}{n!}\right)^{1/n}$ for consecutive patterns, the advantage of using $\ET_\sigma$ is that it can be defined uniformly for all vincular patterns, providing always a positive number, which allows us to compare different patterns.

The analogue of Theorem~\ref{thm:ET} for words, where an $m$-sided die is repeatedly rolled until a given word $w=w_1w_2\dots w_k\in[m]^k$ appears, was obtained by Nielsen in~\cite{PNielsen}. If $\T_w$ is the random variable counting the number of rolls until the first appearance of $w$, it is known that 
\[\ET_w=\sum_{i\in\cB_w} m^i,\]
where  $\cB_w=\{i\in[k]: w_1\dots w_i=w_{k-i+1}\dots w_{k}\}$ is the {\em bifix set} of $w$. It is interesting to note that $\ET_w$ is always an integer, unlike in the case of permutations.


\section{The probability of seeing one pattern before another}
\label{Sec:GeneralProbability}

In the case of words, Conway gave a beautiful formula for the probability that an $m$-ary word appears before another one in a sequence of tosses of an $m$-sided die, see Gardner's account~\cite{Conway}. Given two words $w\in[m]^k$ and $v\in[m]^\ell$ such that neither of them is a consecutive subword of the other, let
$$\cB_{v,w}=\{i\in[\min\{k,\ell\}]: w_1\dots w_i=v_{\ell-i+1}\dots v_{\ell}\},$$ and $$b(v,w)=\sum_{i\in\cB_{v,w}} m^{i-1}.$$ Note that $\cB_{w,w}=\cB_w$.

\begin{theorem}[Conway~\cite{Conway}]\label{thm:Conway}
With the above definitions, the probability that $w$ appears before $v$ in a random sequence is
$$\frac{b(v,v)-b(v,w)}{b(v,v)-b(v,w)+b(w,w)-b(w,v)}.$$
\end{theorem}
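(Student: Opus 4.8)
The plan is to prove Conway's formula by setting up a system of linear equations based on a gambling/martingale argument, which is the cleanest route even though the paper cites generating-function and inductive proofs. I would first recast the problem probabilistically: imagine a fresh gambler arriving before each die roll, betting on the successive letters of one target word, with odds chosen so that each individual bet is fair (the casino pays $m$-to-$1$ on each correct letter). Because all bets are fair, the expected total wealth in play at any stopping time equals the total amount wagered, so the game forms a martingale and the optional stopping theorem applies at the (almost surely finite) time $\T$ when either $w$ or $v$ first appears.

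The key computation is to evaluate, at the stopping time $\T$, the total payout to the gamblers betting on a fixed word $u\in\{v,w\}$. A gambler who started $i$ rolls before the end is still "alive" and collecting winnings precisely when the last $i$ letters produced so far coincide with the first $i$ letters of $u$; when the game stops because the observed suffix is some word $x\in\{v,w\}$, this alive-gambler contributes $m^i$ to the $u$-payout exactly for each $i$ in the overlap set recording when a prefix of $u$ equals a suffix of $x$. Summing over all gamblers, the expected payout on word $u$ when the terminating word is $x$ is $m\cdot b(x,u)$ in the notation of the statement (after accounting for the $m^{i-1}$ normalization), while the expected amount wagered on $u$ is $m\,\ET$ times the probability a gambler bets on $u$. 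I would therefore let $p$ denote the probability that $w$ appears before $v$, condition on which of the two words terminates the sequence, and write the fairness (zero expected net gain) condition separately for the gamblers on $w$ and on $v$.

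This yields two linear equations. Taking $u=w$: the expected winnings collected by $w$-gamblers, namely $p\,b(w,w)+(1-p)\,b(v,w)$, must equal the expected total staked, $\ET$; taking $u=v$ gives $p\,b(w,v)+(1-p)\,b(v,v)=\ET$. Subtracting eliminates the common quantity $\ET$ (so we never need to know it explicitly) and gives
\[
p\,b(w,w)+(1-p)\,b(v,w)=p\,b(w,v)+(1-p)\,b(v,v),
\]
which I would solve for $p$. Collecting terms produces
\[
p\bigl(b(w,w)-b(w,v)+b(v,v)-b(v,w)\bigr)=b(v,v)-b(v,w),
\]
and hence $p=\dfrac{b(v,v)-b(v,w)}{b(v,v)-b(v,w)+b(w,w)-b(w,v)}$, exactly the claimed expression.

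The step I expect to be the main obstacle, and the one requiring the most care, is the bookkeeping that identifies the gamblers' total winnings with the correlation quantities $b(x,u)$. One must verify that a gambler who entered $i$ steps before termination is paid if and only if the length-$i$ suffix of the realized sequence matches the length-$i$ prefix of $u$, that this match condition is exactly membership of $i$ in the overlap set defining $\cB_{x,u}$ when the terminal suffix is $x$, and that the geometric payout $m^i$ aligns with the $m^{i-1}$ weighting after the uniform per-roll stake is divided out. I would also need the hypothesis that neither word is a factor of the other to guarantee that the stopping word $x$ is unambiguous and that $\T<\infty$ almost surely (so optional stopping is legitimate and $\ET<\infty$). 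Once the payout identity is established, the rest is the elementary linear algebra above.
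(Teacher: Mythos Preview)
The paper does not prove this theorem; it is quoted as a known result of Conway, with pointers to the generating-function proof of Guibas--Odlyzko and the inductive proof of Collings. There is therefore no proof in the paper to compare your attempt against.

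Your martingale/gambling argument is a third standard route (usually attributed to Li, 1980), and it is correct in outline and conclusion. The only imprecision is in the normalization. With one fresh gambler arriving each round and staking one unit on the fixed target $u$, the total staked by $u$-gamblers through time $\T$ is exactly $\T$, while their holdings at time $\T$, given that the terminating word is $x$, equal $\sum_{i\in\cB_{x,u}} m^{i}=m\,b(x,u)$. The two optional-stopping identities are therefore
\[
m\bigl(p\,b(w,w)+(1-p)\,b(v,w)\bigr)=\ET \quad\text{and}\quad m\bigl(p\,b(w,v)+(1-p)\,b(v,v)\bigr)=\ET,
\]
not the versions you wrote without the factor $m$; and your phrase ``$m\,\ET$ times the probability a gambler bets on $u$'' does not match the setup, since in each stream every gambler bets on the same fixed word. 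None of this affects the conclusion, because the common right-hand side cancels on subtraction, but it should be cleaned up in a final write-up. The check you flagged as the main obstacle---that a $u$-gambler who entered more than $\min(|u|,|x|)$ steps before termination cannot still be alive, since otherwise one word would be a factor of the other---is exactly where the incomparability hypothesis enters, and it goes through as you describe.
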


For example, the probability that the binary word $w=100$ appears before $v=000$ is
$$\frac{7-0}{7-0+4-3}=\frac{7}{8}.$$

We now turn to the case of permutations. Given two patterns $\sigma\in\S_k$ and $\tau\in\S_\ell$ such that neither of them contains a consecutive occurrence of the other --- we call these {\em incomparable} patterns --- denote by $\Pr(\sigma\prec\tau)$ the probability that $\sigma$ appears before $\tau$ in the random sequence $\X$. There seems to be no simple expression for $\Pr(\sigma\prec\tau)$ for arbitrary $\sigma$ and $\tau$, and we will see in Section~\ref{Sec:Length3} that this value is not always a rational number. 

We start with some observations that follow trivially by symmetry. These will reduce the number of cases that we have to consider in Sections~\ref{Sec:Length3} and~\ref{Sec:Length4}. For $\sigma\in\S_k$, define its complement
$\comp{\sigma}\in\S_k$  by $\comp{\sigma}_i=k+1-\sigma_i$ for $1\le i\le k$.

\begin{observation}\label{obs} Let $\sigma$ and $\tau$ be two incomparable patterns. Then
\begin{enumerate}
	\item $\Pr(\tau\prec\sigma)=1-\Pr(\sigma\prec\tau)$,
	\item $\Pr(\sigma\prec\tau)=\Pr(\comp{\sigma}\prec\comp{\tau})$,
	\item $\Pr(\sigma\prec\comp{\sigma})=\frac12$.
\end{enumerate}
\end{observation}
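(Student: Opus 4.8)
The three statements in Observation~\ref{obs} are all symmetry arguments, so my plan is to identify the right measure-preserving transformation of the random sequence $\X$ in each case and track how it acts on patterns.

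For part~(1), I would argue that since $\sigma$ and $\tau$ are incomparable, a consecutive occurrence of $\sigma$ and a consecutive occurrence of $\tau$ cannot begin at the same window; more importantly, because the $X_i$ are i.i.d.\ continuous, with probability~$1$ the pattern that ``appears first'' is well defined, and the events $\{\sigma\prec\tau\}$ and $\{\tau\prec\sigma\}$ partition the sample space up to a null set. Hence their probabilities sum to~$1$. The only subtlety to address is ruling out the degenerate possibility that neither pattern ever appears; this has probability zero because, for instance, $\T_\sigma<\infty$ almost surely (each length-$k$ window independently realizes $\sigma$ with probability $1/k!>0$), so some pattern appears in finite time with probability~$1$.

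For part~(2), the key observation is that complementation corresponds to the transformation $X_i\mapsto 1-X_i$ (using the uniform model on $[0,1]$, or more generally $X_i\mapsto -X_i$ for any symmetric continuous distribution). If $\X=X_1,X_2,\dots$ is the original sequence, then $\X'=1-X_1,1-X_2,\dots$ is again an i.i.d.\ sequence from the same distribution, so it has the same law. Moreover, $\st$ is reversed by this map in the sense that a window of $\X$ standardizes to $\rho$ exactly when the corresponding window of $\X'$ standardizes to $\comp{\rho}$. Therefore the window where $\sigma$ first appears in $\X$ is precisely the window where $\comp{\sigma}$ first appears in $\X'$, and likewise for $\tau$ and $\comp\tau$; the event $\{\sigma\prec\tau\}$ for $\X$ maps bijectively to the event $\{\comp\sigma\prec\comp\tau\}$ for $\X'$. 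Since $\X$ and $\X'$ are equal in distribution, the two probabilities coincide.

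Part~(3) is then immediate by combining the previous two: setting $\tau=\comp\sigma$ in part~(2) gives $\Pr(\sigma\prec\comp\sigma)=\Pr(\comp\sigma\prec\sigma)$ (using $\comp{\comp\sigma}=\sigma$), while part~(1) gives $\Pr(\sigma\prec\comp\sigma)+\Pr(\comp\sigma\prec\sigma)=1$, whence $\Pr(\sigma\prec\comp\sigma)=1/2$. I should just check that $\sigma$ and $\comp\sigma$ are genuinely incomparable so that the notation $\Pr(\sigma\prec\comp\sigma)$ is defined; this holds because $\sigma$ and $\comp\sigma$ have the same length and are distinct (as $k\ge 2$), and two distinct patterns of equal length cannot contain one another. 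I do not anticipate a serious obstacle here—the only care needed is the almost-sure finiteness argument underpinning part~(1) and being explicit that complementation is realized by a distribution-preserving map of the underlying sequence.
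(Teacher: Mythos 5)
Your proof is correct and follows exactly the symmetry reasoning the paper intends: the paper states this observation with no proof at all, remarking only that the three facts ``follow trivially by symmetry,'' and your write-up (distribution-preserving map $X_i\mapsto 1-X_i$ realizing complementation, plus the partition argument for part~1 and the combination for part~3) is precisely that argument made explicit. One small wording fix: overlapping length-$k$ windows are \emph{not} independent, so to get $\T_\sigma<\infty$ almost surely you should appeal to the disjoint windows $X_{(j-1)k+1},\dots,X_{jk}$ for $j=1,2,\dots$, which are genuinely independent and each form an occurrence of $\sigma$ with probability $1/k!$.
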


Denoting the increasing monotone pattern by $\iota_k=12\dots k$, it is easy to see that
\begin{equation}\label{eq:P1k<21}
\Pr(\iota_k\prec 21)=\frac{1}{k!},
\end{equation}
since the only way for the pattern $\iota_k$ to appear first is if the initial $k$ random values are increasing.
In this section we give some general formulas for $\Pr(\sigma\prec\tau)$ that will help us compute 
these probabilities in specific cases.

In the case of words, Collings' proof~\cite{CollingsProof} of Conway's formula relies on the expected number of additional tosses to see one word assuming that the sequence of tosses starts with the other word.
For two incomparable permutations $\sigma\in\S_k$ and $\tau\in\S_\ell$, there are multiple ways to define an analogue of this notion. Next we define two different random variables, which we denote by $\I_{\sigma\to\tau}$ and $\F_{\sigma\to\tau}$.

Define $\I_{\sigma\to\tau}$ as the smallest $j$ such that $X_1,\dots,X_{k+j}$ contains a consecutive occurrence of $\tau$, conditioning on the fact that $X_1,\dots,X_k$ is an occurrence of $\sigma$. In other words, $\I_{\sigma\to\tau}$ is the number of further draws needed to see $\tau$, assuming that $\sigma$ occurs at the beginning of~$\X$ (the letter $\I$ stands for {\em initial}).
Disregarding the requirement that the two patterns are incomparable, we could define $\I_{\sigma\to\sigma}$ as the number of 
further draws needed to see a second occurrence of $\sigma$ (i.e., not counting the initial one). It is not hard to see that $\EI_{\sigma\to\sigma}=k!$ for any $\sigma\in\S_k$, since the expected proportion of occurrences of $\sigma$ in a long random sequence is $1/k!$, so each one is separated from the next by $k!$ on average. 

In general, for incomparable $\sigma\in\S_k$ and $\tau\in\S_\ell$, we have
\begin{align}\nonumber
	\EI_{\sigma\to\tau}&=\sum_{i\ge0}\Pr(\I_{\sigma\to\tau}\ge i)=\sum_{n\ge k}\frac{\#\{\pi\in\S_{n} \text{ that start with }\sigma\text{ and avoid }\tau\}}{\#\{\pi\in\S_{n} \text{ that start with }\sigma\}}\\ \label{eq:EU}
 &=k!\sum_{n\ge k}\frac{\#\{\pi\in\S_{n} \text{ that start with }\sigma\text{ and avoid }\tau\}}{n!},
\end{align}
noting that the number of $\pi\in\S_n$ that start with $\sigma$ (meaning that $\pi_1\pi_2\dots\pi_k$ is an occurrence of $\sigma$)  is simply $n!/k!$. Let us show that, in some cases, it is possible to compute $\EI_{\sigma\to\tau}$ using this formula.

\begin{proposition}\label{prop:EU1k21}	For all $k\ge2$,
\[\EI_{\iota_k\to21}=k!\left(e-\sum_{i=0}^{k-1}\frac{1}{i!}\right).\]
\end{proposition}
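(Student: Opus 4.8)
The plan is to apply formula~\eqref{eq:EU} directly with $\sigma=\iota_k$ and $\tau=21$. Since $\tau=21$, a permutation $\pi\in\S_n$ avoids $21$ as a consecutive pattern precisely when it has no consecutive descent, i.e.\ when $\pi$ is the increasing permutation $12\dots n$ itself; hence among permutations that start with $\iota_k$, the only one that also avoids $21$ is the fully increasing permutation, but that forces the count to collapse too far. So instead I must read the condition correctly: $\pi$ starts with $\iota_k$ means $\pi_1<\pi_2<\dots<\pi_k$, and $\pi$ avoids $21$ means $\pi_1<\pi_2<\dots<\pi_n$; these together just say $\pi$ is increasing, giving a single permutation for each $n$. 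This suggests the numerator is not ``avoids $21$ everywhere'' but rather reflects that $\I_{\iota_k\to21}$ counts draws until the \emph{first new} descent after the initial increasing run, so the relevant permutations are those beginning with an increasing run of length exactly $k$ followed by more increasing entries until position $n-1$, with the descent allowed only at the very end. I would therefore recount: the permutations $\pi\in\S_n$ that start with $\iota_k$ and avoid $21$ as a consecutive pattern in $\pi_1\dots\pi_{n-1}$ but not necessarily beyond --- more carefully, those where $X_1\dots X_{k+j}$ first completes a descent at the last step.

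Concretely, I would express the numerator of~\eqref{eq:EU} as the number of $\pi\in\S_n$ whose first $n-1$ entries are increasing (so that no $21$ occurs in positions $1$ through $n-1$) while starting with the pattern $\iota_k$; since starting with $\iota_k$ is implied once the first $n-1$ entries are increasing (as long as $n-1\ge k$), the count of such $\pi$ is the number of ways to place the final entry $\pi_n$ into an otherwise-increasing arrangement, which is $n-1$ choices (any position for the smallest descent-creating value except making it increasing), but I must restrict to those that genuinely start with $\iota_k$ and contain no earlier descent. The clean way is: the number of permutations of $[n]$ whose first $n-1$ values are in increasing order equals $n$ (choose the value of $\pi_n$ freely, the rest are forced increasing), of which exactly one is fully increasing; so the number starting increasing for $n-1$ steps and then avoiding $21$ throughout is $1$, and subtracting appropriately yields the summand $(n-1)/n!$ or similar. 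I would fix the exact combinatorial quantity by testing small $n$ against the known identity $\EI_{\iota_k\to21}=k!\bigl(e-\sum_{i=0}^{k-1}1/i!\bigr)$ and reconciling the telescoping tail of $\sum_{n\ge k}1/n!$.

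Once the numerator is pinned down as the count of length-$n$ permutations that start with an increasing run and first break monotonicity at the final entry --- which I expect to equal $n-1$ for $n>k$ after accounting for the starting constraint, but more likely reduces via the complementary viewpoint to counting increasing prefixes --- the sum in~\eqref{eq:EU} becomes a tail of the exponential series. Using $k!\sum_{n\ge k}\tfrac{1}{n!}$-type manipulations and the identity $\sum_{n\ge0}1/n!=e$, the expression $k!\bigl(e-\sum_{i=0}^{k-1}1/i!\bigr)=k!\sum_{i\ge k}1/i!$ emerges: I anticipate the numerator over $n!$ telescopes or reindexes so that $\sum_{n\ge k}\tfrac{\#\{\dots\}}{n!}=\sum_{i\ge k}\tfrac{1}{i!}$, and multiplying by $k!$ gives the claim. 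Alternatively, and perhaps more cleanly, I would use the tail-sum form $\EI_{\iota_k\to21}=\sum_{i\ge0}\Pr(\I_{\iota_k\to21}\ge i)$ together with the observation that, conditioned on starting with an increasing run of length $k$, the further draws continue to extend the increasing run until the first descent; the probability that the first $k+i$ values are still increasing is $k!/(k+i)!$ (by symmetry, given the first $k$ are increasing), so $\Pr(\I_{\iota_k\to21}\ge i)=k!/(k+i-1)!$ for $i\ge1$, and summing the resulting series directly produces $k!\sum_{i\ge k}1/i!$.

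The main obstacle is getting the combinatorial bookkeeping in the numerator exactly right, in particular correctly interpreting what ``start with $\iota_k$ and avoid $21$'' contributes to $\I_{\iota_k\to21}\ge i$: one must be careful that conditioning on the initial occurrence of $\iota_k$ does not merely demand global monotonicity but rather that the new draws keep extending the increasing run, and that the event $\{\I_{\iota_k\to21}\ge i\}$ corresponds to the first $k+i-1$ draws being increasing. I expect the symmetry argument giving $\Pr(\I_{\iota_k\to21}\ge i)=k!/(k+i-1)!$ (the conditional probability that $k+i-1$ i.i.d.\ continuous values are increasing given that the first $k$ are) to be the crispest route, after which summing $\sum_{i\ge1}k!/(k+i-1)!=k!\sum_{j\ge k}1/j!=k!\bigl(e-\sum_{i=0}^{k-1}1/i!\bigr)$ is routine.
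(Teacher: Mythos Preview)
Your very first observation was correct and is exactly the paper's proof: the numerator in~\eqref{eq:EU} counts permutations in $\S_n$ that start with $\iota_k$ and avoid $21$ as a consecutive pattern \emph{everywhere}, and the only such permutation is $12\dots n$. That count is $1$ for each $n\ge k$, and plugging it into~\eqref{eq:EU} gives
\[
\EI_{\iota_k\to21}=k!\sum_{n\ge k}\frac{1}{n!}=k!\left(e-\sum_{i=0}^{k-1}\frac{1}{i!}\right)
\]
immediately. There is nothing that ``collapses too far'': a single permutation per length is precisely what is needed.

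Your detour stems from misreading~\eqref{eq:EU}. That formula does not ask for permutations that avoid $\tau$ except at the final position; it asks for permutations avoiding $\tau$ outright, because $\Pr(\I_{\sigma\to\tau}\ge i)$ is the probability that no occurrence of $\tau$ has appeared yet after $i-1$ further draws. The middle paragraphs, where you try to recount permutations with a descent only at the end, are aiming at $\Pr(\I_{\sigma\to\tau}=i)$ rather than $\Pr(\I_{\sigma\to\tau}\ge i)$, and are unnecessary.

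Your final tail-sum argument is correct: conditioned on $X_1<\dots<X_k$, the event $\{\I_{\iota_k\to21}\ge i\}$ is exactly $\{X_1<\dots<X_{k+i-1}\}$, which has conditional probability $k!/(k+i-1)!$, and summing over $i\ge1$ gives $k!\sum_{j\ge k}1/j!$. This is the same computation as the paper's, just phrased probabilistically rather than combinatorially; it is not a genuinely different route.
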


\begin{proof}
Using equation~\eqref{eq:EU} with $\sigma=\iota_k$ and $\tau=21$, and noting that the only permutation in $\S_{n}$ that avoids $21$ is $12\dots n$, we have
\[
		\EI_{\iota_k\to21}
  =k!\sum_{n\ge k}\frac{1}{n!}=k!\left(e-\sum_{i=0}^{k-1}\frac{1}{i!}\right).\qedhere
\]
\end{proof}

In the next example, we compute $\EI_{\sigma\to\tau}$ by adapting the cluster method from~\cite{ElizaldeNoy5}.

\begin{theorem}\label{thm:EU21123}
\[\EI_{21\to123}=\sqrt3\tan\left(\frac{\sqrt3}{2}+\frac\pi6\right)-3\approx6.4554.\]
\end{theorem}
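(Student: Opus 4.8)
The plan is to reduce the computation to a single generating-function evaluation. Write $P(z)=P_{123}(z)$ and $\alpha_n=\alpha_n(123)$, and let $a_n$ be the number of permutations in $\S_n$ that avoid $123$ and start with a descent (equivalently, start with the pattern $21$), so that $A(z)\coloneqq\sum_{n\ge2}a_n z^n/n!$. Applying~\eqref{eq:EU} with $\sigma=21$ and $\tau=123$ gives at once
\[\EI_{21\to123}=2\sum_{n\ge2}\frac{a_n}{n!}=2A(1),\]
and this is finite because $z=1$ lies inside the disk of convergence of $P$, whose first singularity occurs where $\cos(\tfrac{\sqrt3}2 z+\tfrac\pi6)=0$, i.e.\ at $z=\tfrac{2\pi}{3\sqrt3}\approx1.209>1$. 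So the whole problem reduces to identifying $A(z)$ in closed form.

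The heart of the argument is the identity
\[P'(z)=P(z)\bigl(1+z+A(z)\bigr),\]
which I would prove combinatorially by decomposing a $123$-avoider according to the position of its smallest entry. Given $\pi\in\S_{m+1}$ avoiding $123$, let $p$ be the position with $\pi_p=1$ and split $\pi$ into the block $L=\pi_1\cdots\pi_{p-1}$ to the left of the minimum and the block $R=\pi_{p+1}\cdots\pi_{m+1}$ to its right. The key claim is that $\pi$ avoids $123$ if and only if $\st(L)$ and $\st(R)$ both avoid $123$ and, whenever $\card{R}\ge2$, the block $R$ begins with a descent. Indeed, a consecutive occurrence of $123$ not meeting position $p$ lies entirely in $L$ or entirely in $R$; and since $\pi_p=1$ is the global minimum, the only occurrences that can meet position $p$ are those at positions $(p,p+1,p+2)$ with $\pi_{p+1}<\pi_{p+2}$, as $\pi_p$ can only play the role of the ``$1$'' in the pattern. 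Thus the sole extra requirement is that $R$ not start with an ascent.

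Recording which of the $m$ values $\{2,\dots,m+1\}$ land in $L$, this decomposition becomes a bijection, yielding
\[\alpha_{m+1}=\sum_{i=0}^m\binom{m}{i}\alpha_i\,\tilde a_{m-i},\qquad \tilde a_0=\tilde a_1=1,\quad \tilde a_s=a_s\ (s\ge2),\]
which is exactly the displayed identity at the level of exponential generating functions: the right block is empty, a single point, or a descent-starting $123$-avoider, accounting for the factor $1+z+A(z)$. From $P'=P\,(1+z+A)$ we get $A(z)=\tfrac{P'(z)}{P(z)}-1-z$, hence $A(1)=\tfrac{P'(1)}{P(1)}-2$. Finally I would evaluate the logarithmic derivative using~\eqref{eq:P123}: since $\log P(z)=\mathrm{const}+\tfrac z2-\log\cos\bigl(\tfrac{\sqrt3}2 z+\tfrac\pi6\bigr)$, we have $\tfrac{P'(z)}{P(z)}=\tfrac12+\tfrac{\sqrt3}2\tan\bigl(\tfrac{\sqrt3}2 z+\tfrac\pi6\bigr)$, and therefore
\[\EI_{21\to123}=2A(1)=2\frac{P'(1)}{P(1)}-4=1+\sqrt3\tan\Bigl(\tfrac{\sqrt3}2+\tfrac\pi6\Bigr)-4=\sqrt3\tan\Bigl(\tfrac{\sqrt3}2+\tfrac\pi6\Bigr)-3.\]

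The main obstacle is establishing the decomposition identity, and specifically pinning down the exact boundary condition at the minimum: one must verify that global $123$-avoidance imposes no constraint coupling $L$ and $R$ other than forcing $R$ to begin with a descent, and that this condition is vacuous when $\card{R}\le1$. This boundary analysis is what plays the role of ``adapting the cluster method,'' turning the consecutive-pattern structure into the clean combinatorial differential equation $P'=P(1+z+A)$.
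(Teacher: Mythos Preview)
Your proof is correct and follows essentially the same approach as the paper: both decompose a nonempty $123$-avoider at the position of its global minimum to obtain the identity $P'_{123}=P_{123}(1+z+A)$, solve for $A(z)=P'_{123}/P_{123}-1-z$, and evaluate at $z=1$ via the logarithmic derivative of~\eqref{eq:P123}. Your write-up is in fact more detailed, spelling out the boundary analysis at the minimum and the convergence at $z=1$, but the method is the same.
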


\begin{proof}
Let $P_{123}(z)$ be exponential generating function for $123$-avoiding permutations, and let $D(z)$ be the exponential generating function for $123$-avoiding permutations that start with the pattern $21$. 

Every non-empty $123$-avoiding permutation $\pi$ can be decomposed uniquely as $\pi=\alpha 1\beta$, where $\st(\alpha)$ is an arbitrary $123$-avoiding permutation, and $\st(\beta)$ is an arbitrary $123$-avoiding permutation that starts with the pattern $21$, or otherwise has length $0$ or $1$. It follows that
    \[P_{123}'(z)=P_{123}(z)(D(z)+1+z),\] and so  
    \[D(z)=\frac{P'_{123}(z)}{P_{123}(z)}-1-z.\]

Equation~\eqref{eq:EU} now implies that
\[\EI_{21\to123}=2D(1)=2\frac{P'_{123}(1)}{P_{123}(1)}-4=\sqrt3\tan\left(\frac{\sqrt3}{2}+\frac\pi6\right)-3,
\]
using the expression for $P_{123}(z)$ given in equation~\eqref{eq:P123}.
\end{proof}

Unfortunately, finding $\EI_{\sigma\to\tau}$ and $\EI_{\tau\to\sigma}$ does not seem to help in computing $\Pr(\sigma\prec\tau)$.
Instead, one can define $\F_{\sigma\to\tau}$ as the number of further draws needed to see the pattern $\tau$ after the first occurrence of $\sigma$, assuming that $\sigma$ occurs before $\tau$ in $\X$ (the letter $\F$ stands for {\em first}). The variables $\I_{\sigma\to\tau}$ and $\F_{\sigma\to\tau}$ are different in general; for example, we will see later
that $\EI_{21\to123}\approx6.4554$ but $\EF_{21\to123}\approx 6.5092$.
The following result shows that the expectation of the random variables $\F_{\sigma\to\tau}$ is closely related to the probability that one pattern occurs before another, in analogy to Collings' formula for words~\cite{CollingsProof}.

\begin{theorem}\label{thm:precEF}
	For any two incomparable patterns $\sigma$ and $\tau$, \[\Pr(\sigma\prec\tau)=\frac{\EF_{\tau\to\sigma}+\ET_\tau-\ET_\sigma}{\EF_{\tau\to\sigma}+\EF_{\sigma\to\tau}}.\]
\end{theorem}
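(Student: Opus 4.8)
The plan is to relate everything to the \emph{end-of-game time} $\T=\min\{\T_\sigma,\T_\tau\}$, i.e.\ the draw at which the first of the two patterns appears, and to condition on which player wins. Write $p=\Pr(\sigma\prec\tau)$ and $q=\Pr(\tau\prec\sigma)=1-p$ (using Observation~\ref{obs}(1)). On the event $\{\sigma\prec\tau\}$, the pattern $\sigma$ is the one that ends the game, so $\T_\sigma=\T$ there. On the complementary event $\{\tau\prec\sigma\}$, the game ends with $\tau$ at time $\T=\T_\tau$, and then exactly $\F_{\tau\to\sigma}$ further draws are needed until $\sigma$ appears; this is precisely what the random variable $\F_{\tau\to\sigma}$ was defined to count. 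Hence, on $\{\tau\prec\sigma\}$ we have the additive decomposition $\T_\sigma=\T+\F_{\tau\to\sigma}$.

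Taking expectations of $\T_\sigma=\T_\sigma\mathbf 1_{\sigma\prec\tau}+\T_\sigma\mathbf 1_{\tau\prec\sigma}$ and inserting the two descriptions above gives
\[
\ET_\sigma=\E\!\left[\T\,\mathbf 1_{\sigma\prec\tau}\right]+\E\!\left[(\T+\F_{\tau\to\sigma})\,\mathbf 1_{\tau\prec\sigma}\right]
=\ET+\E\!\left[\F_{\tau\to\sigma}\,\mathbf 1_{\tau\prec\sigma}\right].
\]
Since $\F_{\tau\to\sigma}$ is defined \emph{conditionally} on the event $\{\tau\prec\sigma\}$, the last term is $\Pr(\tau\prec\sigma)\cdot\EF_{\tau\to\sigma}=q\,\EF_{\tau\to\sigma}$ by the definition of conditional expectation, so $\ET_\sigma=\ET+q\,\EF_{\tau\to\sigma}$. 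Running the identical argument with the roles of $\sigma$ and $\tau$ exchanged yields $\ET_\tau=\ET+p\,\EF_{\sigma\to\tau}$.

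Subtracting these two equations eliminates the unknown $\ET$:
\[
\ET_\sigma-\ET_\tau=q\,\EF_{\tau\to\sigma}-p\,\EF_{\sigma\to\tau}
=\EF_{\tau\to\sigma}-p\left(\EF_{\tau\to\sigma}+\EF_{\sigma\to\tau}\right),
\]
using $q=1-p$. Solving the resulting linear equation for $p$ gives exactly
$p=\bigl(\EF_{\tau\to\sigma}+\ET_\tau-\ET_\sigma\bigr)/\bigl(\EF_{\tau\to\sigma}+\EF_{\sigma\to\tau}\bigr)$, as claimed.

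The routine computation is trivial; the substance of the argument is justifying the decomposition $\T_\sigma=\T+\F_{\tau\to\sigma}$ on $\{\tau\prec\sigma\}$ and the identity $\E[\F_{\tau\to\sigma}\mathbf 1_{\tau\prec\sigma}]=q\,\EF_{\tau\to\sigma}$. The main obstacle is a careful treatment of this splitting: after $\tau$ occurs first, the number of additional draws until $\sigma$ appears depends on the sequence only through the relative order of the most recent entries, which is fixed to realize $\tau$ at the splitting moment; because the $X_i$ are i.i.d.\ continuous, the post-$\tau$ evolution is governed by the same pattern dynamics, so $\F_{\tau\to\sigma}$ is a bona fide random variable whose law matches the conditional law used in its definition. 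I would also record that all four expectations are finite --- $\ET_\sigma,\ET_\tau<\infty$ follow from Theorem~\ref{thm:ET} since $P_\sigma(1),P_\tau(1)$ converge, and $\EF_{\sigma\to\tau},\EF_{\tau\to\sigma}<\infty$ by the same geometric-type tail bounds --- so that the subtraction and the division are legitimate.
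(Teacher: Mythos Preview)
Your proof is correct and follows essentially the same approach as the paper: both derive the key identity $\ET_\sigma-\ET_\tau=q\,\EF_{\tau\to\sigma}-p\,\EF_{\sigma\to\tau}$ by conditioning on which pattern appears first, and then solve for $p$. Your version is slightly more explicit in introducing the stopping time $\T=\min\{\T_\sigma,\T_\tau\}$ and in justifying finiteness of the expectations, whereas the paper argues more informally, but the substance is identical.
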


\begin{proof}
The expected (signed) difference between the ending positions of the first occurrence of $\sigma$ and the first occurrence of $\tau$ is $\ET_\sigma-\ET_\tau$. 

On those occasions when $\tau$ precedes $\sigma$, the average further draws to get $\sigma$ from $\tau$ is $\EF_{\tau\to\sigma}$. Similarly, on those occasions when $\sigma$ precedes $\tau$, the average further draws to get $\tau$ from $\sigma$ is $\EF_{\sigma\to\tau}$. Therefore, 
\[
	\ET_\sigma-\ET_\tau=\Pr(\tau\prec\sigma)\,\EF_{\tau\to\sigma}-\Pr(\sigma\prec\tau)\,\EF_{\sigma\to\tau}.
 \]
Using part 1 of Observation~\ref{obs} and solving for $\Pr(\sigma\prec\tau)$, we get the stated formula.
\end{proof}

The downside of Theorem~\ref{thm:precEF} is that obtaining expressions for $\EF_{\sigma\to\tau}$ for arbitrary patterns is difficult. The next result shows that it can be done in some cases.

\begin{proposition}\label{prop:EFiota21} For all $k\ge2$,
	\begin{align*}\EF_{\iota_k\to21}&=k!\left(e-\sum_{i=0}^{k-1}\frac{1}{i!}\right),\\
 \EF_{21\to\iota_k}&=\frac{k!}{k!-1}\left(
 P_{\iota_k}(1)
 -\sum_{i=0}^{k-1}\frac{1}{i!}\right),
 \end{align*}
 where $P_{\iota_k}(1)$ is given by equation~\eqref{eq:Pmonotone}.
\end{proposition}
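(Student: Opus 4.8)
The plan is to compute $\EF_{\iota_k\to21}$ directly from its definition as a tail sum, and then to obtain $\EF_{21\to\iota_k}$ essentially for free by inverting the identity in Theorem~\ref{thm:precEF}. For the first formula, I would begin by identifying the conditioning event. Since an occurrence of $21$ is exactly a descent and an occurrence of $\iota_k$ is an increasing run of length $k$, the event $\iota_k\prec21$ occurs if and only if $X_1<X_2<\dots<X_k$, in which case the first occurrence of $\iota_k$ ends at position $k$ and no descent has yet appeared. Conditioned on this event, I claim that $\F_{\iota_k\to21}\ge m$ holds precisely when $X_1<X_2<\dots<X_{k+m-1}$, because making $m-1$ further draws without producing a descent means the whole prefix $X_1,\dots,X_{k+m-1}$ is increasing. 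Hence
\[\Pr(\F_{\iota_k\to21}\ge m)=\frac{\Pr(X_1<\dots<X_{k+m-1})}{\Pr(X_1<\dots<X_k)}=\frac{k!}{(k+m-1)!}.\]
Summing over $m\ge1$ and reindexing by $n=k+m-1$ gives $\EF_{\iota_k\to21}=k!\sum_{n\ge k}1/n!=k!\bigl(e-\sum_{i=0}^{k-1}1/i!\bigr)$, as claimed. This coincides with $\EI_{\iota_k\to21}$ from Proposition~\ref{prop:EU1k21}, which is to be expected, since conditioning on $\iota_k\prec21$ forces the first occurrence of $\iota_k$ to sit at the very start.

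For the second formula, rather than attempt a direct computation of $\EF_{21\to\iota_k}$ --- which is more delicate, since after the first descent the sequence retains ``memory'' and a new increasing run may begin from the low value just drawn --- I would instead solve for it algebraically. Applying Theorem~\ref{thm:precEF} with $\sigma=21$ and $\tau=\iota_k$ gives
\[\Pr(21\prec\iota_k)=\frac{\EF_{\iota_k\to21}+\ET_{\iota_k}-\ET_{21}}{\EF_{\iota_k\to21}+\EF_{21\to\iota_k}}.\]
I would then substitute the known ingredients: $\Pr(21\prec\iota_k)=1-\tfrac1{k!}=\tfrac{k!-1}{k!}$ by equation~\eqref{eq:P1k<21} and part~1 of Observation~\ref{obs}; $\ET_{\iota_k}=P_{\iota_k}(1)$ and $\ET_{21}=e$ from Theorem~\ref{thm:ET} and equation~\eqref{eq:ET123}; and the value of $\EF_{\iota_k\to21}$ just obtained. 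Solving the resulting linear equation for $\EF_{21\to\iota_k}$ yields
\[\EF_{21\to\iota_k}=\frac{\EF_{\iota_k\to21}+k!\bigl(P_{\iota_k}(1)-e\bigr)}{k!-1}=\frac{k!}{k!-1}\left(P_{\iota_k}(1)-\sum_{i=0}^{k-1}\frac1{i!}\right),\]
where the two $e$ terms cancel in the numerator after expanding $\EF_{\iota_k\to21}$.

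The only real obstacle is the clean identification of the tail event for $\F_{\iota_k\to21}$, namely the verification that conditioning on $\iota_k\prec21$ coincides with conditioning on an initial increasing run of length $k$ and that ``$m-1$ further draws with no $21$'' collapses to full monotonicity of the prefix. Once this is in place, the first formula is a short tail-sum computation and the second formula is pure algebra via Theorem~\ref{thm:precEF}. I would take care to record that the cancellation of the $e$ terms is exactly what makes the second expression assume the stated compact form.
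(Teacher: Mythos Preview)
Your proof of the first formula is correct and essentially identical to the paper's: both observe that conditioning on $\iota_k\prec21$ forces $\X$ to start with $\iota_k$, so $\F_{\iota_k\to21}=\I_{\iota_k\to21}$, and the tail sum is the one already computed in Proposition~\ref{prop:EU1k21}.

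For the second formula your route is correct but genuinely different from the paper's. The paper argues combinatorially: it conditions on the pattern $\sigma\in\S_k\setminus\{\iota_k\}$ formed by the first $k$ draws, expresses $\EF_{21\to\iota_k}$ as an average of $\EI_{\sigma\to\iota_k}+k-j-1$ (where $j$ is the first descent of $\sigma$), writes $\ET_{\iota_k}$ as a similar average, and subtracts the two expressions to eliminate the unknown $\EI_{\sigma\to\iota_k}$ terms. You instead take the independently established value $\Pr(21\prec\iota_k)=1-1/k!$ from equation~\eqref{eq:P1k<21} and invert Theorem~\ref{thm:precEF} algebraically. Your approach is shorter and avoids the counting of permutations by first-descent position; the paper's approach, on the other hand, is self-contained in the sense that it does not feed $\Pr(\iota_k\prec21)$ back in, and indeed the paper then uses Proposition~\ref{prop:EFiota21} together with Theorem~\ref{thm:precEF} as a consistency check that recovers $\Pr(123\prec21)=1/6$. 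In your argument that check becomes a tautology, which is harmless but worth noting.
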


\begin{proof}
The first formula follows from Proposition~\ref{prop:EU1k21}, using the fact that $\F_{\iota_k\to21}=\I_{\iota_k\to21}$.
This is because if $\iota_k$ occurs before $21$ in $\X$, then $\X$ must start with $\iota_k$.

To compute $\EF_{21\to\iota_k}$, we note that the condition that $21$ appears before $\iota_k$ in $\X$ is equivalent to the fact that the first $k$ entries of $\X$ form a pattern in $\S_k\setminus\{\iota_k\}$. Suppose that the pattern formed by these entries is $\sigma\in\S_k\setminus\{\iota_k\}$, and let $f(\sigma)$ be the first descent of $\sigma$, i.e., the smallest $j$ such that $\sigma_j>\sigma_{j+1}$. In this case, the variables $F_{21\to\iota_k}$ and $I_{\sigma\to\iota_k}$ differ by $k-(j+1)$, since 
both count further draws until we see $\iota_k$, starting after $X_{j+1}$ and after $X_{k}$, respectively.
Summing over the possible patterns, it follows that
\begin{equation}\label{eq:EF211k} \EF_{21\to\iota_k}=\frac{1}{k!-1}\sum_{j=1}^{k-1}\sum_{\substack{\sigma\in\S_k\setminus\{\iota_k\}\\ f(\sigma)=j}}(\EI_{\sigma\to\iota_k}+k-j-1).
\end{equation}

Similarly, one can express $\ET_{\iota_k}$ by summing over the possible patterns $\sigma\in\S_k$ formed by the first $k$ entries of $\X$. If $\sigma=\iota_k$, then $\T_{\iota_k}=k$; otherwise, $\T_{\iota_k}=\I_{\sigma\to\iota_k}+k$, since $\I_{\sigma\to\iota_k}$ counts the further draws needed to see $\iota_k$ after the initial occurrence of $\sigma$. Thus,
\begin{equation}\label{eq:ET1k} \ET_{\iota_k}=\frac{1}{k!}\left(k+\sum_{\sigma\in\S_k\setminus\{\iota_k\}}(\EI_{\sigma\to\iota_k}+k)\right). \end{equation}

Comparing equations~\eqref{eq:EF211k} and~\eqref{eq:ET1k} ,
\begin{equation}\label{eq:compareETF}
k!\,\ET_{\iota_k}-(k!-1)\,\EF_{21\to\iota_k} =k+\sum_{j=1}^{k-1}\sum_{\substack{\sigma\in\S_k\setminus\{\iota_k\}\\ f(\sigma)=j}}(j+1) = k+\sum_{j=1}^{k-1} \frac{k!}{(j-1)!}.
\end{equation}
In the last equality, we used the fact that, in order to have $f(\sigma)=j$, the first $j+1$ entries of $\sigma$ must satisfy $\sigma_1<\sigma_2<\dots<\sigma_j>\sigma_{j+1}$. Of the $(j+1)!$ permutations of the first $j+1$ entries, exactly $j$ satisfy this condition. Thus, 
$$\card{\{\sigma\in\S_k\setminus\{\iota_k\}:f(\sigma)=j\}}=j\,\frac{k!}{(j+1)!}.$$

Equation~\eqref{eq:compareETF} can be written as 
$$\ET_{\iota_k}=\left(1-\frac{1}{k!}\right)\EF_{21\to\iota_k}+\sum_{i=0}^{k-1} \frac{1}{i!}.$$
Solving for $\EF_{21\to\iota_k}$ and using Theorem~\ref{thm:ET}, we obtain the stated expression.
\end{proof}

For example, for $k=3$, Proposition~\ref{prop:EFiota21} gives 
$$\EF_{123\to21}=6e-15\approx1.3097,$$
and, using equation~\eqref{eq:P123} for $P_{123}(z)$, 
$$\EF_{21\to123}=\frac{3\sqrt{3e}}{5\cos\left(\frac{\sqrt3}{2}+\frac\pi6\right)}-3\approx 6.5092.$$

Plugging the above expressions for $\EF_{123\to21}$ and $\EF_{21\to123}$, along with the expressions for $\ET_{21}=\ET_{12}$ and $\ET_{123}$ from equation~\eqref{eq:ET123}, into Theorem~\ref{thm:precEF}, we deduce, after simplifying, that
$$\Pr(123\prec21)=\frac16,$$
which agrees with equation~\eqref{eq:P1k<21}.

Another approach to computing the probability that $\sigma$ appears before $\tau$ is to count the relevant permutations. We will use this approach throughout the paper.

Denote by $\Av_n^\sigma(\sigma,\tau)$ the set of permutations in $\S_n$ that end with an occurrence of $\sigma$, and avoid both $\sigma$ and $\tau$ elsewhere. Define $\Av_n^\tau(\sigma,\tau)$ similarly.

\begin{theorem}\label{thm:PrGF}
	Let $\sigma\in\S_k$ and $\tau\in\S_\ell$ be incomparable patterns.
	Then \[\Pr(\sigma\prec\tau)=\sum_{n\ge k}\frac{\card{\Av_n^\sigma(\sigma,\tau)}}{n!}.\]
\end{theorem}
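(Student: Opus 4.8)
The plan is to identify the event $\{\sigma\prec\tau\}$ with a disjoint union, taken over the possible ending positions $n$ of the first occurrence of $\sigma$, of events that depend only on the relative order of the first $n$ random values, and then to exploit the fact that this relative order is uniformly distributed over $\S_n$.

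First I would recast the definition. Writing $\T_\sigma$ and $\T_\tau$ for the ending positions of the first consecutive occurrences of $\sigma$ and $\tau$ (as in Section~\ref{Sec:ExpectedTime}), the event $\sigma\prec\tau$ is exactly $\{\T_\sigma<\T_\tau\}$. Both variables are finite almost surely (for instance because $\ET_\sigma,\ET_\tau<\infty$ by Theorem~\ref{thm:ET}), and the incomparability of $\sigma$ and $\tau$ rules out ties: if an occurrence of $\sigma$ and an occurrence of $\tau$ ended at the same position, then the shorter pattern would be a consecutive pattern of the longer one, contradicting incomparability, while continuity of the $X_i$ already excludes repeated values. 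Hence we may write $\{\sigma\prec\tau\}=\bigsqcup_{n\ge k}\{\T_\sigma=n<\T_\tau\}$, the sum starting at $n=k$ since an occurrence of $\sigma\in\S_k$ cannot end before position $k$.

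Next I would describe the event $\{\T_\sigma=n<\T_\tau\}$ purely in terms of $\pi\coloneqq\st(X_1\cdots X_n)\in\S_n$. Saying that the first occurrence of $\sigma$ ends at position $n$ and that no occurrence of $\tau$ ends at or before $n$ translates into three conditions on $\pi$: its final window $\pi_{n-k+1}\cdots\pi_n$ is an occurrence of $\sigma$; it has no occurrence of $\sigma$ ending before position $n$; and it has no occurrence of $\tau$ at all. Here incomparability is used once more to see that the final window, being an occurrence of $\sigma$, cannot simultaneously contain or be contained in an occurrence of $\tau$ ending at $n$, so ``avoids $\tau$ elsewhere'' coincides with ``avoids $\tau$ everywhere''. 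These are precisely the conditions defining $\Av_n^\sigma(\sigma,\tau)$, whence $\{\T_\sigma=n<\T_\tau\}$ equals the event $\{\st(X_1\cdots X_n)\in\Av_n^\sigma(\sigma,\tau)\}$.

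Finally, since $X_1,\dots,X_n$ are i.i.d.\ continuous, the standardization $\st(X_1\cdots X_n)$ is uniformly distributed on $\S_n$, so each relative order occurs with probability $1/n!$ and $\Pr(\T_\sigma=n<\T_\tau)=\card{\Av_n^\sigma(\sigma,\tau)}/n!$. Summing over $n\ge k$ then yields the stated formula. The only step requiring genuine care, rather than routine bookkeeping, is the translation in the third paragraph: one must verify that the probabilistic description ``first occurrence of $\sigma$ at time $n$, with no earlier $\sigma$ and no $\tau$ up to time $n$'' matches the combinatorial set $\Av_n^\sigma(\sigma,\tau)$ exactly, and in particular that the incomparability hypothesis removes every ambiguity about occurrences of the two patterns ending at a common position.
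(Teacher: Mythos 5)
Your proof is correct and follows essentially the same route as the paper: both decompose the event according to the position $n$ at which the winning pattern's first occurrence ends, identify that event with $\{\st(X_1\cdots X_n)\in\Av_n^\sigma(\sigma,\tau)\}$, and use the uniformity of $\st(X_1\cdots X_n)$ on $\S_n$ to get the summand $\card{\Av_n^\sigma(\sigma,\tau)}/n!$. The paper phrases the decomposition as conditioning on the event $L_n$ that the game ends at time $n$, while you take the disjoint union of $\{\T_\sigma=n<\T_\tau\}$ directly, and you spell out the tie-exclusion and the ``avoids $\tau$ elsewhere equals avoids $\tau$ everywhere'' points that the paper leaves implicit; these are presentational differences only.
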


\begin{proof}
Let $L_n$ be the event that the first occurrence of either of the patterns $\sigma$ or $\tau$ appears exactly after $n$ draws $X_1,\dots,X_n$. 
Observe that \[\Pr(L_n)=\frac{\card{\Av_n^\sigma(\sigma,\tau)} +\card{\Av_n^\tau(\sigma,\tau)}}{n!},\]
and
\[ \Pr(\sigma\prec\tau\big|L_n)=\frac{\card{\Av_n^\sigma(\sigma,\tau)}}{\card{\Av_n^\sigma(\sigma,\tau)} +\card{\Av_n^\tau(\sigma,\tau)}}. \]
Therefore,
\[
	\Pr(\sigma\prec\tau)=\sum_{n}\Pr(L_n) \Pr(\sigma\prec\tau\big|L_n)
	=\sum_{n}\frac{\card{\Av_n^\sigma(\sigma,\tau)}}{n!}. \qedhere
\]
\end{proof}

For example, since the set $\Av_n^{\iota_k}(\iota_k,21)$ is empty unless $n=k$, in which case it has size $1$, Theorem~\ref{thm:PrGF} gives $\Pr(\iota_k\prec21)=1/k!$, agreeing with equation~\eqref{eq:P1k<21}.

Our next result is a simple consequence of Theorem~\ref{thm:PrGF}. We will use it to find several pairs of patterns $\sigma,\tau$ that have equal probability of appearing first, that is, 
$$\Pr(\sigma\prec\tau)=\Pr(\tau\prec\sigma)=\frac12.$$
We write $\sigma\equiv\tau$ to denote this property, and we say that $\sigma$ and $\tau$ are a {\em tied pair}. For example, Observation~\ref{obs} implies that $\sigma\equiv\comp{\sigma}$ for all $\sigma$.
It is important to note that the relation $\equiv$ is not transitive. 
For example, if $\sigma=132$, $\tau=123$ and $\comp{\tau}=321$, then $\sigma\equiv\tau$, $\tau\equiv\comp{\tau}$, but $\sigma\not\equiv\comp{\tau}$, as we will see in Section~\ref{Sec:Length3}. We noticed that, in fact, there is no pair $\sigma,\tau$ of patterns of length $3$ or $4$ satisfying $\sigma\equiv\tau$ and $\sigma\equiv\comp{\tau}$. An example of non-transitivity that does not involve complements arises in length $4$: if $\sigma=1324$, $\tau=1342$ and $\rho=1432$, we will see that $\sigma\equiv\tau$ and $\tau\equiv\rho$, but $\sigma\not\equiv\rho$.

\begin{corollary}\label{cor:tie}
    If there is a bijection between $\Av_n^{\sigma}(\sigma,\tau)$ and $\Av_n^{\tau}(\sigma,\tau)$ for all $n$, then $\sigma\equiv\tau$.
\end{corollary}

\begin{theorem}\label{thm:iotarho}
For all $2\le i\le k-1$, we have $$\iota_k\equiv12\cdots(i-1)(i+1)\cdots ki.$$
\end{theorem}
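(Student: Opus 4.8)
The plan is to apply Corollary~\ref{cor:tie}: writing $\tau=12\cdots(i-1)(i+1)\cdots ki$, I will construct an explicit bijection $\phi$ between $\Av_n^{\iota_k}(\iota_k,\tau)$ and $\Av_n^{\tau}(\iota_k,\tau)$ for every $n\ge k$ (note that $\iota_k$ and $\tau$ are distinct patterns of the same length, hence incomparable). The map fixes the first $n-k$ entries and only reshapes the final $k$. Given $\pi\in\Av_n^{\iota_k}(\iota_k,\tau)$, its final block $\pi_{n-k+1}\cdots\pi_n$ is an increasing run; if $a_1<a_2<\dots<a_k$ are those values, I keep $\pi_1\cdots\pi_{n-k}$ unchanged and replace the final block by $a_1,\dots,a_{i-1},a_{i+1},\dots,a_k,a_i$, i.e.\ I move the $i$th smallest value to the last position. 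Then $\st$ of the final block is $\tau$ by construction, and the inverse map simply re-sorts the last $k$ entries into increasing order, so $\phi$ is a bijection at the level of permutations; the real content is that it restricts to a bijection between the two prescribed sets.

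To check that $\phi(\pi)\in\Av_n^{\tau}(\iota_k,\tau)$, I would observe that only windows (blocks of $k$ consecutive positions) meeting the last $k$ positions can change, and that the unique window containing position $n$ is the final block, now a legitimate occurrence of $\tau$. For a window $B_j$ starting at some $j\le n-k$, let $m$ be the number of its entries in positions $\ge n-k+1$, so $1\le m\le k-1$ and its last prefix entry is always $\pi_{n-k}$. If $m\le i-1$, these $m$ trailing entries are $a_1,\dots,a_m$ in both $\pi$ and $\phi(\pi)$, so $B_j$ is unchanged and inherits avoidance from $\pi$. If $m\ge i$, the trailing entries of $B_j$ in $\phi(\pi)$ are $a_1,\dots,a_{i-1},a_{i+1},\dots,a_{m+1}$, an increasing run of length at least $2$; this rules out $B_j=\tau$, whose only descent sits in the final position, and were $B_j=\iota_k$ (hence fully increasing), the entries $\pi_j,\dots,\pi_{n-k},a_1$ would already be increasing in $\pi$, forcing $\pi$ to contain $\iota_k$ at position $j$, contrary to $\pi\in\Av_n^{\iota_k}(\iota_k,\tau)$. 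The inverse direction is symmetric, using that the first $k-1$ entries of a $\tau$-block are increasing and begin with $a_1$, so that the same case analysis applies verbatim.

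The main obstacle, and the step needing the most care, is exactly these straddling windows: moving $a_i$ out of the final block and shifting $a_{i+1},\dots,a_k$ leftward alters the multiset of values occupying a window $B_j$ that crosses the boundary, so a priori a new occurrence of $\iota_k$ or $\tau$ could be manufactured. The resolution is the invariant that the reshaped suffix $a_1,\dots,a_{i-1},a_{i+1},\dots,a_k$ stays increasing and keeps its minimum $a_1$; this collapses the question of whether a straddling window is increasing to the single junction comparison $\pi_{n-k}<a_1$, which is identical in $\pi$ and $\phi(\pi)$, and it forces any would-be $\tau$-occurrence to be fully increasing and hence not $\tau$. Once this is verified for $\phi$ and its inverse, Corollary~\ref{cor:tie} gives $\iota_k\equiv\tau$.
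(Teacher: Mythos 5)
Your proof is correct and is essentially the same as the paper's: you use the identical bijection (move the entry playing the role of $i$ in the final increasing run to the last position, with the inverse re-sorting the last $k$ entries back into increasing order) and conclude via Corollary~\ref{cor:tie}. The only difference is presentational: your case analysis on the overlap size $m$ spells out in detail the paper's terser claim that the move creates no new occurrences of $\iota_k$ or $\tau$, both arguments resting on the same junction comparison at position $n-k$.
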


\begin{proof}
Let $\sigma=12\cdots(i-1)(i+1)\cdots ki$.
We will construct a bijection between $\Av_n^{\iota_k}(\iota_k,\sigma)$ and $\Av_n^{\sigma}(\iota_k,\sigma)$ for all $n$, and apply Corollary~\ref{cor:tie}. These sets are empty for $n<k$, and the bijection is trivial for $n=k$, so let us assume that $n>k$.

Given $\pi\in\Av_n^{\iota_k}(\iota_k,\sigma)$, its last $k+1$ entries must satisfy
\[ \pi_{n-k}>\pi_{n-k+1}<\pi_{n-k+2}<\dots<\pi_{n},\]
since the only occurrence of $\iota_k$ is at the end of $\pi$. The entry playing the role of $i$ in this occurrence is $\pi_{n-k+i}$. We map $\pi$ to the permutation $\pi'$ obtained by moving this entry to the very end, that is
$$\pi'=\pi_1\pi_2\dots\pi_{n-k+i-1}\pi_{n-k+i+1}\pi_{n-k+i+2}\dots\pi_n\pi_{n-k+i}.$$
See Figure~\ref{fig:iotarho} for an illustration. The resulting permutation belongs to $\Av_n^{\sigma}(\iota_k,\sigma)$, as it ends with an occurrence of $\sigma$, while no other occurrences of $\iota_k$ or $\sigma$ have been created by this move.

\begin{figure}[htb]\centering
\begin{tikzpicture}
	\draw (0,0) -- (1,1);
	\draw[dashed] (1,1) -- (2,2);
	\draw (2,2) -- (2.5,2.5);
	\draw[dashed] (-3,0.5) rectangle (-0.5,3);
	\draw (0,0) -- (-0.5,0.5);
	\draw (8,0) -- (9,1);
	\draw[dashed] (9,1) -- (10,2);
	\draw (10,2) -- (10.5,2.5) -- (11,1.5);
	\draw[dashed] (5,0.5) rectangle (7.5,3);
	\draw (8,0) -- (7.5,0.5);
	\draw [stealth-stealth, thick](3,1) -- (4.5,1);
	\filldraw[black] (0,0) circle (1pt) node[anchor=west]	{1};
	\filldraw[black] (0.5,0.5) circle (1pt) node[anchor=west]	{2};
	\filldraw[black] (1,1) circle (1pt) node[anchor=west]	{3};
	\filldraw[red] (1.5,1.5) circle (1pt) node[anchor=south]{$i$};
	\filldraw[black] (2,2) circle (1pt) node[anchor=west]	{$k-1$};
	\filldraw[black] (2.5,2.5) circle (1pt) node[anchor=west]	{$k$};
	\filldraw[black] (8,0) circle (1pt) node[anchor=west]	{1};
	\filldraw[black] (8.5,0.5) circle (1pt) node[anchor=west]	{2};
	\filldraw[black] (9,1) circle (1pt) node[anchor=west]	{3};
	\filldraw[black] (10,2) circle (1pt) node[anchor=east]	{$k-1$};
	\filldraw[black] (10.5,2.5) circle (1pt) node[anchor=west]	{$k$};
	\filldraw[red] (11,1.5) circle (1pt) node[anchor=south]{$i$};
\end{tikzpicture}
\caption{The bijection in the proof of Theorem~\ref{thm:iotarho}.}
\label{fig:iotarho}
\end{figure}
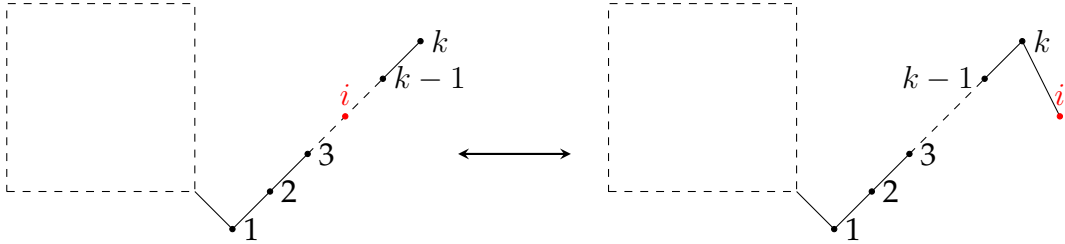

Next we describe the inverse map. Given $\pi'\in\Av_n^{\sigma}(\iota_k,\sigma)$, its last $k+1$ entries must satisfy
\[ \pi'_{n-k}>\pi'_{n-k+1}<\pi'_{n-k+2}<\dots<\pi'_{n-k+i-1}<\pi'_n<\pi'_{n-k+i}<\pi'_{n-k+i+1}<\dots<\pi'_{n-1},\]
using that $\pi'$ avoids $\iota_k$. We map $\pi'$ to the permutation
$$\pi=\pi'_1\pi'_2\dots\pi'_{n-k+i-1}\pi'_n\pi'_{n-k+i}\pi'_{n-k+i+1}\dots\pi'_{n-1},$$ that is, we move entry $\pi'_n$ to the left by inserting it into the preceding increasing run. Note that, since $\pi'_{n-k}>\pi'_{n-k+1}$, this move does not create any occurrence of $\iota_k$ or $\sigma$ in the permutation. It is clear that these two operations are inverses of each other.
\end{proof}

In analogy to the case of words, for any $\sigma,\tau\in\S_k$, we define the set
$$\cB_{\sigma,\tau}=\{i\in[k-1]:\st(\sigma_{k-i+1}\dots\sigma_k)=\st(\tau_1\dots\tau_{i})\}.$$
Note that $1$ always belongs to this set. If neither $\cB_{\sigma,\tau}$ nor $\cB_{\tau,\sigma}$ contain any larger elements, we say that $\sigma$ and $\tau$ are {\em non-overlapping}, following~\cite{ElizaldeNoy5,DwyerSergi}.
More generally, for any $j\ge2$, we say that $\sigma$ and $\tau$ are {\em non-$j$-overlapping} if
neither $\cB_{\sigma,\tau}$ nor $\cB_{\tau,\sigma}$  contain any elements greater than or equal to $j$, that is, if $\cB_{\sigma,\tau}\cup\cB_{\tau,\sigma}\subseteq[j-1]$. In particular, non-$2$-overlapping means the same as non-overlapping.

We say that $\sigma$ is {\em non-$j$-self-overlapping} if $\cB_{\sigma,\sigma}\subseteq[j-1]$.
For $j=2$, we will simply say that $\sigma$ is non-self-overlapping.

\begin{theorem}\label{thm:NoOverlap}
    Let $\sigma,\tau\in\S_k$, and suppose that there exists $j\in[k-1]$ such that
    \begin{enumerate}[label=(\roman*)]
        \item  $\sigma_i=\tau_i$ for $1\le i\le j$,
        \item $\sigma$ and $\tau$ are non-$(j+1)$-overlapping, and
        \item each of $\sigma$ and $\tau$ is non-$(j+1)$-self-overlapping. 
    \end{enumerate}
    Then $\sigma\equiv\tau$.
\end{theorem}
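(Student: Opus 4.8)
The plan is to apply Corollary~\ref{cor:tie} by constructing, for each $n$, a bijection between $\Av_n^\sigma(\sigma,\tau)$ and $\Av_n^\tau(\sigma,\tau)$. These sets are empty for $n<k$, and for $n=k$ they are the singletons $\{\sigma\}$ and $\{\tau\}$ (so the bijection is forced), hence I focus on $n>k$. The map is the natural one suggested by hypothesis~(i): given $\pi\in\Av_n^\sigma(\sigma,\tau)$, whose last $k$ entries form an occurrence of $\sigma$, I keep the entries in positions $1,\dots,n-k+j$ fixed and rearrange only the $k-j$ entries in positions $n-k+j+1,\dots,n$ so that the final block of $k$ entries now forms an occurrence of $\tau$. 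Because $\sigma_i=\tau_i$ for $1\le i\le j$, the first $j$ entries of the final block already match $\tau$, the set of values occupying the last $k-j$ positions is the same whether the block realizes $\sigma$ or $\tau$, and so there is a unique reordering of them realizing $\tau_{j+1}\dots\tau_k$. This makes the map well defined; its inverse is the same construction with the roles of $\sigma$ and $\tau$ exchanged, and since hypotheses~(i)--(iii) are symmetric in $\sigma$ and $\tau$, that inverse is well defined too.

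The crux is verifying that the resulting $\pi'$ genuinely lies in $\Av_n^\tau(\sigma,\tau)$, i.e.\ that the rearrangement does not create any occurrence of $\sigma$ or $\tau$ other than the intended final $\tau$. I would classify occurrences in $\pi'$ by their final position $m$. Occurrences with $m\le n-k+j$ involve only positions on which $\pi$ and $\pi'$ agree, so they cannot appear, since $\pi$ avoids both patterns before its final block. The occurrence ending at $m=n$ is the intended final $\tau$. The essential case is $n-k+j+1\le m\le n-1$: since the changed block has length $k-j<k$, any such occurrence, say of pattern $\rho\in\{\sigma,\tau\}$, must straddle the boundary and overlaps the final $\tau$-block in exactly the positions $n-k+1,\dots,m$, a block of length $i\coloneqq m-n+k\ge j+1$.

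Reading the relative order of the values in these $i$ overlap positions in two ways---as the length-$i$ suffix of $\rho$ and as the length-$i$ prefix of $\tau$---forces $\st(\rho_{k-i+1}\dots\rho_k)=\st(\tau_1\dots\tau_i)$, that is, $i\in\cB_{\rho,\tau}$. Now the hypotheses rule this out: when $\rho=\sigma$, hypothesis~(ii) gives $\cB_{\sigma,\tau}\subseteq[j]$, and when $\rho=\tau$, hypothesis~(iii) gives $\cB_{\tau,\tau}\subseteq[j]$; either way $i\ge j+1$ is impossible. Hence $\pi'\in\Av_n^\tau(\sigma,\tau)$. The symmetric analysis of the inverse map---where the final block realizes $\sigma$ and the relevant constraints become $\cB_{\tau,\sigma}\subseteq[j]$ from~(ii) and $\cB_{\sigma,\sigma}\subseteq[j]$ from~(iii)---shows the two maps are mutually inverse bijections, so Corollary~\ref{cor:tie} yields $\sigma\equiv\tau$.

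I expect the main obstacle to be the bookkeeping in the overlap step: pinning down precisely which positions a straddling occurrence shares with the final block, confirming that this shared block has length exactly $i=m-n+k$, and checking that all four overlap sets $\cB_{\sigma,\tau}$, $\cB_{\tau,\sigma}$, $\cB_{\sigma,\sigma}$, $\cB_{\tau,\tau}$ get used, each in the correct direction of the bijection. A subsidiary point worth stating cleanly is the length argument ($k-j<k$) guaranteeing that no occurrence fits entirely inside the changed block, so that every problematic occurrence truly straddles the boundary and the $\cB$-reasoning applies.
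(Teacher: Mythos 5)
Your proposal is correct and follows essentially the same route as the paper's proof: the same bijection (fix everything through position $n-k+j$ and rearrange the final block to realize $\tau$, which is well defined by hypothesis~(i)), and the same verification that any stray occurrence would overlap the final block in $i\ge j+1$ positions, forcing $i\in\cB_{\sigma,\tau}$ or $i\in\cB_{\tau,\tau}$ (and $\cB_{\tau,\sigma}$, $\cB_{\sigma,\sigma}$ for the inverse), contradicting hypotheses~(ii) and~(iii). Your case analysis by ending position $m$ is just a slightly more explicit bookkeeping of the argument the paper gives.
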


\begin{proof}
By Corollary~\ref{cor:tie}, it suffices to describe a bijection between $\Av_n^\sigma(\sigma,\tau)$ and $\Av_n^\tau(\sigma,\tau)$ for all $n$.

    Given $\pi\in\Av_n^\sigma(\sigma,\tau)$, its last $k$ entries must satisfy $\st(\pi_{n-k+1}\dots\pi_n)=\sigma$.
    We map $\pi$ to the permutation $\omega$ obtained by rearranging the last $k$ entries of $\pi$ to be in the same relative order as $\tau$, that is, $\omega\in\S_n$ is the only permutation such that $\omega_i=\pi_i$ for $i\in[n-k]$, and  \begin{equation}\label{eq:omegatau}\st(\omega_{n-k+1}\dots\omega_n)=\tau.\end{equation}
    Note that condition (i) implies that $\omega_i=\pi_i$ for $n-k+1\le i\le n-k+j$. 
    
    Clearly, $\omega$ ends with $\tau$. Thus, to show that $\omega\in\Av_n^\tau(\sigma,\tau)$, it remains to prove that $\omega$ avoids $\sigma$ and $\tau$ elsewhere.
    Suppose, for contradiction, that $\omega$ has an occurrence of $\sigma$. Since the common prefix $\omega_1\dots\omega_{n-k+j}=\pi_1\dots\pi_{n-k+j}$ avoids $\sigma$, such an occurrence must overlap with $\omega_{n-k+1}\dots\omega_n$ in at least $j+1$ positions. But, using equation~\eqref{eq:omegatau}, this contradicts condition (ii).
    Similarly, if $\omega$ had an occurrence of $\tau$ other than the one at the end, it would contradict condition (iii) for $\tau$.
    
   The inverse map can be described similarly: given $\omega\in\Av_n^\tau(\sigma,\tau)$, we rearrange its last $k$ entries to put them in the same relative order as $\sigma$.
\end{proof}

\begin{example}
The patterns $24513$ and $24531$ agree in the first $3$ positions, they are non-$4$-overlapping, and each of them is non-$4$-self-overlapping. Thus, by Theorem~\ref{thm:NoOverlap}, we have $24513\equiv24531$.
On the other hand, we will see in Section~\ref{Sec:Length4} that
$2314\not\equiv2341$. These patterns agree in the first $2$ positions, but they fail the non-$3$-overlapping condition, since $\cB_{2341,2314}=\{1,3\}$.
\end{example}

The case $j=1$ of Theorem~\ref{thm:NoOverlap} is already quite powerful. 
In this case, $\sigma$ and $\tau$ are non-overlapping and non-self-overlapping, and they start with the same letter. An example of this situation is the tie $2341\equiv 2431$. Another example is the family of patterns in the next corollary. Notice that this result cannot be deduced directly from Theorem~\ref{thm:iotarho}, since $\equiv$ is not a transitive relation.

\begin{corollary}
    \label{cor:rhorho'}
	For all $2\le i<i'\le k-1$, we have $$12\cdots (i-1)(i+1)\cdots ki\equiv 12\cdots(i'-1)(i'+1)\cdots ki'.$$ 
\end{corollary}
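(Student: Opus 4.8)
The plan is to recognize this as precisely the $j=1$ case of Theorem~\ref{thm:NoOverlap} and to verify its three hypotheses directly; since $\equiv$ is not transitive, we cannot simply chain $\sigma\equiv\iota_k\equiv\tau$ through Theorem~\ref{thm:iotarho}, so the argument must go through the overlap conditions. Write $\sigma=12\cdots(i-1)(i+1)\cdots ki$ and $\tau=12\cdots(i'-1)(i'+1)\cdots ki'$, both of which are obtained from $\iota_k$ by relocating a single value (namely $i$, resp.\ $i'$) to the final position. Because $i,i'\ge2$, both patterns begin with the entry $1$, so condition~(i) of Theorem~\ref{thm:NoOverlap} holds with $j=1$.

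The heart of the argument is one structural observation about patterns of this shape, from which the non-overlapping condition~(ii) and both non-self-overlapping instances of condition~(iii) follow at once. For either pattern, all entries except the last form an increasing run, so every prefix of length $m\le k-1$ is increasing and therefore standardizes to $\iota_m$. In the other direction, for $2\le m\le k-1$ the suffix of length $m$ terminates in the value $i$ (resp.\ $i'$), while its penultimate entry is $\sigma_{k-1}=k$ (resp.\ $\tau_{k-1}=k$); hence every such suffix has a descent in its last two positions and cannot standardize to the increasing pattern $\iota_m$.

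Combining these, I would recall that $\cB_{\mu,\nu}=\{m\in[k-1]:\st(\mu_{k-m+1}\cdots\mu_k)=\st(\nu_1\cdots\nu_m)\}$, and run the observation over each of the four pairs $(\mu,\nu)$ with $\mu,\nu\in\{\sigma,\tau\}$. In every case the right-hand standardization is a prefix, hence equal to $\iota_m$, whereas for $m\ge2$ the left-hand standardization is a proper suffix and is not increasing; so the defining equality fails for all $m\ge2$. This forces $\cB_{\sigma,\tau}=\cB_{\tau,\sigma}=\cB_{\sigma,\sigma}=\cB_{\tau,\tau}=\{1\}$, which is exactly the statement that $\sigma$ and $\tau$ are non-overlapping (condition~(ii) for $j+1=2$) and that each is non-self-overlapping (condition~(iii) for $j+1=2$). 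Theorem~\ref{thm:NoOverlap} then delivers $\sigma\equiv\tau$.

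The only point demanding care — and the place a sloppy write-up could go wrong — is making sure the "descent in the last two positions" observation is valid for \emph{every} suffix length $m\ge2$, including long suffixes that reach back into the initial block $1,2,\dots,i-1$. This is unproblematic because the relevant descent occurs between the penultimate entry $k$ and the final entry $i<k$, which is present regardless of how far back the suffix extends; consequently no genuine case analysis on $m$ is needed beyond the inequality $m\ge2$. Thus the apparent obstacle — controlling all overlaps of the two fairly intricate-looking patterns — collapses to the single remark that a prefix is increasing while a proper suffix is not.
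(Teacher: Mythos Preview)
Your proposal is correct and follows exactly the approach the paper intends: the corollary is stated immediately after the remark that the $j=1$ case of Theorem~\ref{thm:NoOverlap} applies whenever the two patterns start with the same letter and are non-overlapping and non-self-overlapping, and you have carefully verified these hypotheses. The paper gives no further details beyond that remark, so your write-up in fact supplies more than the paper does, and your key observation---that every proper prefix of $\sigma$ or $\tau$ standardizes to $\iota_m$ while every suffix of length $\ge2$ ends in the descent $k,i$ (resp.\ $k,i'$)---is precisely the right way to see that all four bifix sets reduce to $\{1\}$.
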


Next we give a family of tied pairs of patterns that can overlap in $2$ positions, but not in $3$ or more positions. This results follows from Theorem~\ref{thm:NoOverlap} with $j=2$.

\begin{corollary} For all $k\ge5$, we have 
\[134\dots(k-2)k2(k-1)\equiv134\dots(k-2)(k-1)2k.\]
\end{corollary}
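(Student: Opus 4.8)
The plan is to apply Theorem~\ref{thm:NoOverlap} with $j=2$ to the two patterns
\[\sigma=1\,3\,4\cdots(k-2)\,k\,2\,(k-1)\qquad\text{and}\qquad\tau=1\,3\,4\cdots(k-2)\,(k-1)\,2\,k,\]
so that $\sigma\equiv\tau$ follows immediately. Writing out their entries, $\sigma_1=\tau_1=1$ and $\sigma_j=\tau_j=j+1$ for $2\le j\le k-3$, while $\sigma_{k-2}=k$, $\sigma_{k-1}=2$, $\sigma_k=k-1$ and $\tau_{k-2}=k-1$, $\tau_{k-1}=2$, $\tau_k=k$. In particular both patterns begin with $1\,3$, so condition~(i) holds for $j=2$ with nothing to check. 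It then remains to verify the two non-$3$-overlapping conditions~(ii) and~(iii); that is, I must show that none of $\cB_{\sigma,\tau}$, $\cB_{\tau,\sigma}$, $\cB_{\sigma,\sigma}$, $\cB_{\tau,\tau}$ contains an integer $\ge 3$.

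The observation that makes all four conditions fall out uniformly in $k$ is that each of $\sigma$ and $\tau$ has a single descent, located at position $k-2$ (the entry $2$ sits immediately after the largest entry of the initial monotone block). Since matching consecutive patterns must have identical descent sets, I would compare descent positions. For $3\le i\le k-1$, a suffix of length $i$ of either pattern contains position $k-2$, and its unique descent therefore lies at relative position $i-2\ge 1$. A prefix of length $i$, on the other hand, is strictly increasing whenever $i\le k-2$ (it lies entirely inside the initial monotone block), and for $i=k-1$ it has its unique descent at position $k-2=i-1$.

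Comparing these: for every $i$ with $3\le i\le k-2$ the suffix has a descent while the prefix has none, and for $i=k-1$ the suffix descent sits at position $k-3$ whereas the prefix descent sits at position $k-2$. In all cases the descent sets disagree, so no suffix of length $\ge 3$ can standardize to a prefix of the same length. This reasoning applies verbatim to all four sets $\cB_{\sigma,\tau}$, $\cB_{\tau,\sigma}$, $\cB_{\sigma,\sigma}$, $\cB_{\tau,\tau}$, establishing~(ii) and~(iii), after which Theorem~\ref{thm:NoOverlap} yields $\sigma\equiv\tau$. The only point requiring genuine care --- and thus the main (minor) obstacle --- is the boundary case $i=k-1$, where the prefix is no longer monotone and one must confirm that its descent has shifted by exactly one position relative to the suffix; everything else is immediate from the single-descent structure (and here $k\ge 5$ guarantees the relevant positions are valid).
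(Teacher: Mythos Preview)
Your proof is correct and follows exactly the approach indicated in the paper, which simply states that the result follows from Theorem~\ref{thm:NoOverlap} with $j=2$. You have supplied the verification of hypotheses~(ii) and~(iii) that the paper leaves implicit; the descent-set comparison is a clean way to handle all four overlap sets at once.
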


We conclude this section with another family of tied pairs of overlapping patterns, which cannot be deduced from Theorems~\ref{thm:iotarho} and~\ref{thm:NoOverlap}. 

\begin{theorem}\label{thm:1k2k-1general} 
Let $k\ge4$, and let $\sigma=1k\alpha(k-2)(k-1)$ and $\tau=1(k-1)\beta(k-2)k$, where $\alpha$ and $\beta$ are permutations of $\{2,3,\dots,k-3\}$.
Then $\sigma\equiv\tau$.
\end{theorem}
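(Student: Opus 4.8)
The plan is to apply Corollary~\ref{cor:tie}: it suffices to construct, for every $n$, a bijection between $\Av_n^\sigma(\sigma,\tau)$ and $\Av_n^\tau(\sigma,\tau)$. First I would record the overlap structure. A direct computation gives $\cB_{\sigma,\sigma}=\cB_{\tau,\tau}=\cB_{\sigma,\tau}=\cB_{\tau,\sigma}=\{1,2\}$: both patterns begin and end with an ascent (their first two entries are $1k$ and $1(k-1)$, their last two are $(k-2)(k-1)$ and $(k-2)k$), so two occurrences can overlap in exactly $2$ positions and no more. In particular neither pattern is non-self-overlapping, so Theorem~\ref{thm:NoOverlap} does not apply and the bijection must cope with length-$2$ overlaps. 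I would also note that $\sigma$ and $\tau$ share the global minimum at position $1$ and the value $k-2$ at position $k-1$, and differ only by (a) swapping the two largest values, which sit at positions $2$ and $k$, and (b) reordering the middle block from $\alpha$ to $\beta$.

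The natural candidate is the restandardization map $\Phi$ that fixes the prefix $\pi_1\dots\pi_{n-k}$ and rearranges the last $k$ entries so that $\st(\pi'_{n-k+1}\dots\pi'_n)=\tau$. Writing $M>M'>c$ for the three largest entries of the final window, $\Phi$ moves $M$ from position $n-k+2$ to position $n$, moves $M'$ from position $n$ to position $n-k+2$, keeps $c$ at position $n-1$ and the minimum at position $n-k+1$, and reorders the middle block. I would then check, window by window, that $\Phi(\pi)$ has no occurrence of $\sigma$ or $\tau$ except the terminal one. Windows contained in the unchanged prefix are automatically safe; the window ending at $c$ is safe because the final-window minimum would then occupy an interior position, contradicting that $\sigma$ and $\tau$ start with their minimum; and windows ending inside the reordered middle block are safe because their only large final-window entry is $M'$, whose position cannot match the role either pattern demands. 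For the one remaining window, ending at position $n-k+2$ (which misses position $n$, so $M'$ is its largest final-window entry), a comparison argument applies: were it an occurrence in $\Phi(\pi)$, restoring $M$ in place of $M'$ would normally reproduce that occurrence already in $\pi$, contradicting $\pi\in\Av_n^\sigma(\sigma,\tau)$.

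The comparison breaks down at exactly this window — the $2$-overlap neighbor sharing two positions with the terminal occurrence — and this is the main obstacle. If the prefix contains a value $V$ with $M'<V<M$, then replacing $M'$ by $M$ promotes $M$ to window maximum, overtaking $V$ and converting a $\sigma$-shaped window into a $\tau$-shaped one (or vice versa); but the resulting window carries the middle order inherited from the prefix rather than the required $\beta$ (or $\alpha$), so it is \emph{not} an occurrence in $\pi$, and no contradiction arises. This is a genuine failure precisely when $\alpha\neq\beta$, which first occurs at $k=6$: for $\sigma=162345$ and $\tau=153246$ one checks that $\pi=1\,9\,2\,3\,4\,10\,5\,6\,7\,8\in\Av_{10}^\sigma(\sigma,\tau)$, yet $\Phi(\pi)=1\,9\,2\,3\,4\,8\,6\,5\,7\,10$ acquires a spurious occurrence of $\sigma$ in positions $1$ through $6$. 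Thus $\Phi$ can turn a lone terminal occurrence into a length-$2$ cluster and is not well defined into $\Av_n^\tau(\sigma,\tau)$. When $\alpha=\beta$, which is forced for $k\in\{4,5\}$, the comparison argument does go through, so $\Phi$ already settles those cases.

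To handle $\alpha\neq\beta$ I would make the correspondence sensitive to this cluster structure. Since consecutive occurrences overlap in exactly $2$ positions and both patterns glue compatibly along an ascent, the occurrences at the tail of a permutation form a chain $O_1,\dots,O_m$ with each $O_i\in\{\sigma,\tau\}$; membership in $\Av_n^\sigma$ versus $\Av_n^\tau$ amounts to the terminal type $O_m$ being $\sigma$ versus $\tau$, together with maximality and the absence of other occurrences. Flipping the terminal type forces value changes that propagate leftward, because the last entry of each $O_{i-1}$ is the second entry of $O_i$, so the flips are coupled and cannot be performed independently. The plan is to reconstruct the whole cluster from its value multiset and its type sequence, biject type sequences ending in $\sigma$ with those ending in $\tau$, and use an inclusion–exclusion over occurrences in the spirit of the cluster method of~\cite{ElizaldeNoy5} to neutralize the maximality and ``no other occurrence'' conditions. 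The crux is exactly this coupling: showing that flipping the terminal occurrence extends to a well-defined, invertible operation on the entire cluster — reconciling the top-value swap with the middle-block reordering at each $2$-overlap — and that after inclusion–exclusion the counts $\card{\Av_n^\sigma(\sigma,\tau)}$ and $\card{\Av_n^\tau(\sigma,\tau)}$ coincide.
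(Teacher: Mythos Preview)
Your restandardization map $\Phi$ is precisely the map the paper uses, and your overlap computation $\cB_{\sigma,\sigma}=\cB_{\tau,\tau}=\cB_{\sigma,\tau}=\cB_{\tau,\sigma}=\{1,2\}$ matches the paper's. The paper then argues that the only window of $\omega=\Phi(\pi)$ that could carry a spurious occurrence is $\omega_{n-2k+3}\dots\omega_{n-k+2}$, and disposes of it by comparing with the corresponding window of $\pi$ after replacing $\omega_{n-k+2}=\pi_n$ by the larger value $\pi_{n-k+2}$. Your counterexample with $\sigma=162345$, $\tau=153246$, $\pi=1\,9\,2\,3\,4\,10\,5\,6\,7\,8$ is valid and exposes a genuine gap in that step: in the paper's sub-case $\pi_{n-2k+4}<\pi_{n-k+2}$, the replacement produces the pattern $1(k-1)\alpha(k-2)k$, which equals $\tau=1(k-1)\beta(k-2)k$ only when $\alpha=\beta$; here one gets $152346\neq\tau$, so no contradiction arises in $\pi$, yet $\Phi(\pi)=1\,9\,2\,3\,4\,8\,6\,5\,7\,10$ acquires an occurrence of $\sigma$ in positions $1$--$6$. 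Thus the paper's argument, as written, proves the theorem only for $\alpha=\beta$ (hence automatically for $k\in\{4,5\}$, which covers all the length-$4$ ties attributed to this theorem in Figure~\ref{fig:TieLength4}), and in that regime your verification of $\Phi$ coincides with the paper's.

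Your proposal, however, does not close the gap for $\alpha\neq\beta$ either. The cluster/inclusion--exclusion outline is reasonable, but the essential work --- exhibiting an explicit involution on chains of $2$-overlapping occurrences that flips the terminal type, checking it is well defined and invertible despite the $\alpha$/$\beta$ middle-block mismatch at each overlap, and then carrying out the sieve to conclude $\card{\Av_n^\sigma(\sigma,\tau)}=\card{\Av_n^\tau(\sigma,\tau)}$ --- is only sketched. For general $\alpha\neq\beta$, neither the paper's argument nor your proposal constitutes a complete proof.
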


\begin{figure}[htb]\centering
\begin{tikzpicture}
	\filldraw[black] (0,0) circle (2pt) node[left]	{1};
	\filldraw[black] (0.5,3.5) circle (2pt) node[left]	{$k$};
	\draw[dashed] (.75,.25) rectangle (2.75,2.25);
	\draw (1.75,1.25) node {$\alpha$};
	\filldraw[black] (3,2.5) circle (2pt) node[right]	{$k-2$};
	\filldraw[black] (3.5,3) circle (2pt) node[right]	{$k-1$};
	\draw (5.75,1.5) node {$\equiv$};
	\begin{scope}[shift={(8,0)}]
		\filldraw[black] (0,0) circle (2pt) node[left]	{1};
	\filldraw[black] (0.5,3) circle (2pt) node[left]	{$k-1$};
	\draw[dashed] (.75,.25) rectangle (2.75,2.25);
	\draw (1.75,1.25) node {$\beta$};
	\filldraw[black] (3,2.5) circle (2pt) node[right]	{$k-2$};
	\filldraw[black] (3.5,3.5) circle (2pt) node[right]	{$k$};
	\end{scope}
\end{tikzpicture}
\caption{The patterns $\sigma$ and $\tau$ in Theorem~\ref{thm:1k2k-1general}.}
\label{fig:1k2k-1general}
\end{figure}

\begin{proof}
As before, it suffices to construct a bijection between $\Av_n^\sigma(\sigma,\tau)$ and $\Av_n^\tau(\sigma,\tau)$ for all~$n$.
Given $\pi\in\Av_n^\sigma(\sigma,\tau)$, its last $k$ entries must satisfy
\begin{equation}\label{eqn:pi1k2k-1}
        \st(\pi_{n-k+1}\pi_{n-k+2}\dots\pi_n)=\sigma.
\end{equation}
We map $\pi$ to the permutation $\omega$ obtained by reordering these last $k$ entries so that they form an occurrence of $\tau$, that is,
$$\st(\omega_{n-k+1}\omega_{n-k+2}\dots\omega_n)=\tau,$$
while keeping $\omega_i=\pi_i$ for all $i\in[n-k]$. Note that $\omega_{n-k+1}=\pi_{n-k+1}$, $\omega_{n-1}=\pi_{n-1}$, $\omega_{n-k+2}=\pi_n$, and $\omega_n=\pi_{n-k+2}$. 

By construction, $\omega$ ends with an occurrence of $\tau$. To show that $\omega\in\Av_n^\tau(\sigma,\tau)$, it remains to show that it has no other occurrence of $\sigma$ or $\tau$.

Since the common prefix $\omega_1\dots\omega_{n-k+1}=\pi_1\dots\pi_{n-k+1}$ avoids the patterns $\sigma$ and $\tau$, any occurrence of such patterns must overlap with $\omega_{n-k+1}\dots\omega_n$ in at least two positions. In fact, the overlap must be in exactly two positions, noting that $\cB_{\sigma,\tau}=\cB_{\tau,\tau}=\{1,2\}$. So, the only possible occurrence of $\sigma$ and $\tau$ (other than the one at the end) in $\omega$ must be $\omega_{n-2k+3}\omega_{n-2k+4}\dots\omega_{n-k+2}$.

Consider first the case that $\st(\omega_{n-2k+3}\omega_{n-2k+4}\dots\omega_{n-k+2})=\sigma$, or equivalently, 
\begin{equation}\label{eqn:gamma1k2k-1} 
    \st(\pi_{n-2k+3}\pi_{n-2k+4}\dots\pi_{n-k+1}\pi_{n})=\sigma.
\end{equation}
By equation~\eqref{eqn:pi1k2k-1}, $\pi_n<\pi_{n-k+2}$. We now have two cases depending on the relationship between $\pi_{n-k+2}$ and $\pi_{n-2k+4}$. If $\pi_{n-k+2}<\pi_{n-2k+4}$, equation~\eqref{eqn:gamma1k2k-1} implies that
$$\st(\pi_{n-2k+3}\pi_{n-2k+4}\dots\pi_{n-k+1}\pi_{n-k+2})=\sigma,$$ contradicting the fact that $\pi\in\Av_n^\sigma(\sigma,\tau)$.
If $\pi_{n-2k+4}<\pi_{n-k+2}$, equation~\eqref{eqn:gamma1k2k-1} implies that 
$$\st(\pi_{n-2k+3}\pi_{n-2k+4}\dots\pi_{n-k+1}\pi_{n-k+2})=\tau,$$ 
again a contradiction.

Consider now the case that $\st(\omega_{n-2k+3}\omega_{n-2k+4}\dots\omega_{n-k+2})=\tau$.
Since $\omega_{n-k+2}=\pi_n<\pi_{n-k+2}$, this implies that
$$\st(\pi_{n-2k+3}\pi_{n-2k+4}\dots\pi_{n-k+1}\pi_{n-k+2})=\tau,$$ the same contradiction as before.

The inverse map has a very similar description: given $\omega\in\Av_n^\tau(\sigma,\tau)$, we let $\pi$ be the permutation obtained by switching the entries $\pi_{n-k+2}$ and $\pi_n$. It is clear that $\pi$ ends with an occurrence of $\sigma$. The fact that $\pi$ avoids $\sigma$ and $\tau$ elsewhere can be prove using an argument analogous to the one given above, noting now that $\cB_{\tau,\sigma}=\cB_{\sigma,\sigma}=\{1,2\}$.
\end{proof}

For example, the above theorem implies that $1734256\equiv1623457$. Figure~\ref{fig:1k2k-1general} shows the general shape of the patterns in this theorem. 

It is an interesting open problem to characterize all pairs of patterns $\sigma,\tau$ for which $\sigma\equiv\tau$.


\section{Winning probabilities for patterns of length 3}
\label{Sec:Length3}

In this section we focus on patterns of length~3. We give formulas for the probabilities $\Pr(\sigma\prec\tau)$ for all pairs $\sigma,\tau\in\S_3$. A summary of the numerical values is given in Table~\ref{tab:length3}. It is enough to compute the entries that are highlighted in \textcolor{orange}{orange}, since the others follow from Observation~\ref{obs}. In the rest of this section, we compute each of these six values.

\begin{table}[tbh]
    \centering
\begin{tabular}{c||c|c|c|c|c|c}
\diagbox{$\sigma$}{$\tau$} & 123 & 132 & 213 & 231 & 312 & 321 \\ \hline\hline
	123 & -- & \orange{0.5} & \orange{0.412} & \orange{0.550} & \orange{0.342} & 0.5\\ \hline
	132 & 0.5 & -- & \orange{0.461} & \orange{0.476} & 0.5 & 0.658 \\ \hline
	213 & 0.588 & 0.539 & -- & 0.5 & 0.524 & 0.450 \\ \hline
	231 & 0.450 & 0.524 & 0.5 & -- & 0.539 & 0.588 \\ \hline
	312 & 0.658 & 0.5 & 0.476 & 0.461 & -- & 0.5 \\ \hline
	321 & 0.5 & 0.342 & 0.550 & 0.412 & 0.5 & -- \\
\end{tabular}
\caption{The probabilities $\Pr(\sigma\prec\tau)$ for all pairs of patterns of length $3$. 
Except for the values $0.5$, which are exact, only the first 3 significant digits are given.}
\label{tab:length3}
\end{table}


\subsection{$123$ vs.\ $132$}

Theorem~\ref{thm:iotarho} with $k=3$ and $i=2$ implies that $123\equiv132$. All the other tied pairs of patterns of length $3$ follow now from Observation~\ref{obs}, and are depicted in Figure~\ref{fig:TieLength3}.

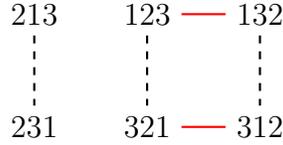
\begin{figure}[htb]
    \centering
    \begin{tikzpicture}[scale=.75]
        \node (213) at (0,1) {$213$};
        \node (231) at (0,-1) {$231$};
        \node (123) at (2,1) {$123$};
        \node (132) at (4,1) {$132$};
        \node (321) at (2,-1) {$321$};
        \node (312) at (4,-1) {$312$};
        \draw [dashed,thick] (213)--(231);
        \draw [dashed,thick] (123)--(321);
        \draw [dashed,thick] (132)--(312);
        \draw [red,thick] (123)--(132);
        \draw [red,thick] (321)--(312);
    \end{tikzpicture}
    \caption{All tied pairs of patterns in $\S_3$. Dashed edges indicate pairs of the form $\sigma\equiv\comp{\sigma}$.}
    \label{fig:TieLength3}
\end{figure}


\subsection{$132$ vs.\ $231$}

\begin{theorem}\label{thm:132vs231}
	\[\Pr(132\prec231)=\frac{e^2-2e-1}{2}\approx 0.476.\]
\end{theorem}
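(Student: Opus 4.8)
The plan is to apply Theorem~\ref{thm:PrGF}, which reduces the problem to counting, for each $n$, the set $\Av_n^{132}(132,231)$ of permutations of $[n]$ that end with an occurrence of $132$ and contain no occurrence of $132$ or $231$ among their earlier consecutive triples. The observation that unlocks everything is that $132$ and $231$ are exactly the two patterns of length $3$ whose middle entry is largest; thus a consecutive occurrence of either one is precisely a \emph{peak}, i.e.\ an index $i$ with $\pi_{i-1}<\pi_i>\pi_{i+1}$. Consequently a permutation avoids both patterns everywhere if and only if it has no peak at all, and $\Av_n^{132}(132,231)$ consists of exactly those permutations whose unique peak is at position $n-1$ (the peak forced by the terminal occurrence), subject to the extra condition that the final triple be $132$ rather than $231$, that is $\pi_{n-2}<\pi_n$.

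Such a permutation first strictly decreases, then strictly increases up to the peak $\pi_{n-1}$, and finally drops once to $\pi_n$. Writing $a\ge0$ for the length of the initial descending run and $b\ge1$ for the length of the ascending run, we have $a+b=n-2$, and one checks that the valley necessarily holds the value $1$. I would then count the permutations of a fixed shape $(a,b)$ as follows. The values $\{2,\dots,n\}$ get split into the set $P$ of size $a$ filling the descending prefix (which must then appear in decreasing order) and the set $M$ of size $b+1$ filling the ascending run together with the last entry $\pi_n$. Given $M$, the requirement $\pi_{n-2}<\pi_n<\pi_{n-1}$ forces $\pi_{n-1}=\max M$, forces $\pi_n$ to be the second largest element of $M$, and leaves the remaining $b-1$ elements to fill the ascending run in increasing order; hence the whole permutation is determined by the unordered set $M$ alone. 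The number of permutations of shape $(a,b)$ is therefore $\binom{n-1}{a}$, and summing over the admissible range $a=0,\dots,n-3$ gives
\[ \card{\Av_n^{132}(132,231)}=\sum_{a=0}^{n-3}\binom{n-1}{a}=2^{n-1}-n. \]

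Finally I would feed this count into Theorem~\ref{thm:PrGF} and evaluate the resulting series termwise, using $\sum_{n\ge3}2^{n-1}/n!=(e^2-5)/2$ and $\sum_{n\ge3}n/n!=e-2$, to obtain
\[ \Pr(132\prec231)=\sum_{n\ge3}\frac{2^{n-1}-n}{n!}=\frac{e^2-5}{2}-(e-2)=\frac{e^2-2e-1}{2}. \]
The two series evaluations and the binomial identity are routine. The main obstacle is the structural step: one must verify carefully that the rearrangement described creates no peak other than the terminal one, so that the only occurrence of $132$ or $231$ really is the one at the end, and that the decomposition into the pair $(P,M)$ is a genuine bijection. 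In particular, it has to be checked that placing arbitrarily large values in the descending prefix never produces a spurious occurrence, and that the single inequality $\pi_{n-2}<\pi_n$ is exactly what separates the $132$-endings counted here from the $231$-endings; this asymmetry between the two patterns is the source of the final answer.
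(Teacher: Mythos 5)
Your proof is correct and follows essentially the same route as the paper: both invoke Theorem~\ref{thm:PrGF}, identify occurrences of $132$ and $231$ with peaks to pin down the unimodal-plus-final-drop structure, count $\card{\Av_n^{132}(132,231)}=2^{n-1}-n$ by choosing the entries of the descending prefix, and evaluate the same series. The only cosmetic difference is that the paper establishes the count via the trick of swapping the last two entries (making the permutation visibly determined by the prefix set), whereas you argue directly that $\pi_{n-1}$ and $\pi_n$ must be the largest and second-largest elements of the complementary set $M$.
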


\begin{proof}
We will compute the numbers $\card{\Av_n^{132}(132,231)}$ and apply Theorem~\ref{thm:PrGF}.
A permutation avoids $132$ and $231$ if and only if it has no peaks, 
and so $\pi\in\Av_n^{132}(132,231)$ if and only if there exists some $i\in[n-2]$ such that
\[ \pi_1>\pi_2>\dots>\pi_i<\pi_{i+1}<\dots<\pi_{n-2}<\pi_{n-1}>\pi_{n}\]
where $\pi_{n-2}<\pi_{n}$. Switching the last two entries, this condition is equivalent to 
\[ \pi_1>\pi_2>\dots>\pi_i<\pi_{i+1}<\dots<\pi_{n-2}<\pi_{n}<\pi_{n-1}.\]
For given $i$, a permutation satisfying this condition is determined by the choice of the entries to the left of $\pi_i$. It follows that 
\[\card{\Av_n^{132}(132,231)}=\sum_{i=1}^{n-2}{\binom{n-1}{i-1}}=2^{n-1}-n.\] Hence, by Theorem~\ref{thm:PrGF}, the desired probability is 
\[
	\Pr(132\prec231)=\sum_{n\ge3}\frac{2^{n-1}-n}{n!}=\frac12\sum_{n\ge3}\frac{2^{n}}{n!}-\sum_{n\ge2}\frac{1}{n!}=\frac{e^2-2e-1}{2}. \qedhere
\]
\end{proof}

We remark that the expression for $\Pr(\sigma\prec\tau)$ given in Theorem~\ref{thm:132vs231} is not a rational number, in contrast with Conway's formula for the case of words (Theorem~\ref{thm:Conway}). This suggests that we cannot expect to have a similar simple formula for the case of permutations.


\subsection{$123$ vs.\ $213$}

In order to determine $\card{\Av_n^{123}(123,213)}$, it will be helpful to compute $\card{\Av_n^{312}(123,213)}$ first.

\begin{lemma}\label{lem:bn}
	Let $b_n=\card{\Av_n^{312}(123,213)}$. Then for all $n\ge5$, \[b_n=b_{n-1}+(n-1) b_{n-2},\] with initial conditions $b_0=b_1=b_2=0$, $b_3=1$ and $b_4=4$.
The corresponding exponential generating function is
\[ B(x)=e^{x+\frac{x^2}{2}}\left(1-\int_0^x e^{-t-\frac{t^2}{2}}dt\right)-1-\frac{x^2}{2}.\]
\end{lemma}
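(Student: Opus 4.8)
The plan is to characterize the permutations in $\Av_n^{312}(123,213)$ structurally, derive the recurrence by a combinatorial decomposition based on the position or role of the largest entry (or the final entry), and then solve the resulting differential equation to obtain $B(x)$.

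First I would understand what it means for a permutation to avoid both $123$ and $213$. A permutation avoids both of these length-$3$ patterns exactly when it has no occurrence of either, i.e., no subsequence of three consecutive entries whose two smallest are followed by... more precisely, avoiding $123$ forbids three consecutive increasing entries, and avoiding $213$ forbids three consecutive entries in the relative order ``medium, small, large.'' Together these say: whenever $\pi_j<\pi_{j+1}$ (an ascent at position $j$), we cannot have $\pi_{j+1}<\pi_{j+2}$ (that would be $123$) and we cannot have $\pi_{j+1}>\pi_{j+2}>\pi_j$... I would work out that avoiding both $123$ and $213$ means every ascent $\pi_j<\pi_{j+1}$ must be immediately followed by $\pi_{j+2}<\pi_{j+1}$ with in fact $\pi_{j+2}<\pi_j$ as well, so ascents are quite constrained; equivalently the permutation is built from descending runs of a restricted type. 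The key is to extract from this the local picture at the \emph{end} of the permutation, since we require the last three entries to form the pattern $312$ (so $\pi_{n-2}>\pi_n$, $\pi_{n-1}<\pi_n$, with $\pi_{n-1}$ smallest).

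Next I would set up the recurrence. The natural approach is to track how the last occurrence of $312$ can be extended or how a new entry is appended. Given the ``$+(n-1)b_{n-2}$'' term, the recurrence strongly suggests a decomposition where we either remove a single entry (giving $b_{n-1}$) or remove two entries together with a choice of one of $n-1$ values for one of them (giving $(n-1)b_{n-2}$). Concretely, I would examine the smallest entry $1$ or the value in the penultimate ``valley'' position: one case where the terminal $312$ pattern is ``independent'' of the rest and can be peeled off by deleting the final small entry, producing a valid member of $\Av_{n-1}^{312}(123,213)$, and another case where two entries must be removed simultaneously, with the factor $n-1$ counting the placement/value of the removed entry consistent with the avoidance constraints. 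I would verify this decomposition is a bijection by checking that the avoidance conditions are preserved in both directions and that the initial conditions $b_3=1$, $b_4=4$ match (direct enumeration: the only length-$3$ permutation ending in and equal to $312$ is $312$ itself, and the four length-$4$ cases can be listed).

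Finally, to get the generating function I would translate the recurrence $b_n=b_{n-1}+(n-1)b_{n-2}$ into an ODE for $B(x)=\sum b_n x^n/n!$. The term $b_{n-1}$ contributes $B(x)$ after the usual shift, and $(n-1)b_{n-2}$ contributes a term involving $xB(x)$ and its integral/derivative; carefully accounting for the low-order initial conditions $b_0=b_1=b_2=0$, $b_3=1$ produces an inhomogeneous first-order linear ODE of the form $B'(x)=(1+x)B(x)+g(x)$ for an explicit polynomial correction $g$. Solving this by the integrating factor $e^{-x-x^2/2}$ yields the closed form, and the constant of integration and the additive $-1-x^2/2$ are pinned down by matching the initial terms. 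The main obstacle I anticipate is not the ODE-solving (routine) but establishing the combinatorial recurrence rigorously: making the bijective decomposition precise and proving that both the deletion and insertion operations exactly preserve membership in $\Av^{312}_n(123,213)$, since the avoidance of $123$ and $213$ interacts delicately with the forced shape of the last few entries and with where the removed entries sit.
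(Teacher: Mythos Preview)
Your instinct to aim for the recurrence and then solve the ODE is correct, and the ODE portion is indeed routine. The gap is in the combinatorial decomposition itself: you are looking at the wrong end of the permutation.

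You propose to peel off entries from the terminal $312$. But deleting $\pi_n$ from a permutation ending in the pattern $312$ leaves a permutation whose last two entries satisfy $\pi_{n-2}>\pi_{n-1}$, so the new final triple $\pi_{n-3}\pi_{n-2}\pi_{n-1}$ can \emph{never} be a $312$ (that pattern needs the middle entry smallest). Hence this deletion does not land in $\Av_{n-1}^{312}(123,213)$, and there is no clean single-entry case producing the $b_{n-1}$ term from the tail. Your two-entry case is left entirely unspecified. The ``delicate interaction'' you anticipate is real, and working from the right end does not untangle it.

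The paper's proof works from the \emph{left} end, using one clean observation you did not isolate: in any permutation avoiding both $123$ and $213$, the maximum entry $n$ must occupy position $1$ or $2$. (If $n=\pi_j$ with $j\ge 3$, then $\pi_{j-2}\pi_{j-1}\pi_j$ has its largest entry last, hence is $123$ or $213$.) With this in hand the recurrence is immediate: either $\pi=n\pi'$ with $\pi'\in\Av_{n-1}^{312}(123,213)$, or $\pi=i\,n\,\pi'$ with $i\in[n-1]$ arbitrary and $\st(\pi')\in\Av_{n-2}^{312}(123,213)$, giving $b_{n-1}+(n-1)b_{n-2}$. You did list ``position of the largest entry'' as a candidate pivot, but then abandoned it; that was the right thread to pull.
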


\begin{proof}
In any permutation that avoids $123$ and $213$, the entry $n$ must occur in one of the first two positions. Thus, for $n\ge5$, any $\pi\in\Av_n^{312}(123,213)$ is of the form $\pi=n\pi'$, where 
$\pi'\in\Av_{n-1}^{312}(123,213)$, or of the form $\pi=in\pi'$, where $i\in[n-1]$ and $\st(\pi')$ belongs to $\Av_{n-2}^{312}(123,213)$.
Since we have $n-1$ choices for $i$, this decomposition gives the stated recurrence.
The initial conditions are obtained easily noting that any $\pi\in\Av_n^{312}(123,213)$ must end with an occurrence of $312$.

	From the recurrence, we obtain
 \[	\sum_{n\ge5}b_n\frac{x^{n-1}}{(n-1)!}=\sum_{n\ge5}b_{n-1}\frac{x^{n-1}}{(n-1)!}+\sum_{n\ge5}b_{n-2}\frac{x^{n-1}}{(n-2)!},\]
 which is equivalent to
\[ B'(x)-\frac{x^2}{2}-4\frac{x^3}{3!}=B(x)-\frac{x^3}{3!}+xB(x),\]
 \[ B'(x)=(1+x)B(x)+\frac{x^2+x^3}{2}.\]
 Solving this differential equation and using that $B(0)=0$, we obtain the stated formula for $B(x)$.
\end{proof}

The same sequence, up to initial terms, appears in work of Kitaev and Mansour~\cite[Theorem 24(ii)]{KitaevMansour} (see also \cite[A059480]{OEIS}), where it has an interpretation in terms of vincular patterns. Specifically, they consider permutations that avoid the patterns 
$12\dash3$ and $21\dash3$, and end with a $12$. Let us show that these coincide with our permutations.

\begin{proposition} For $n\ge3$, we have $\Av_n^{312}(123,213)=\Av_n^{12}(12\dash3,21\dash3)$.
\end{proposition}

\begin{proof}
If $n\ge3$ and $\pi\in\Av_n^{12}(12\dash3,21\dash3)$, the last three entries of $\pi$ must form an occurrence of $312$, since otherwise they would create 
an occurrence of $12\dash3$ or $21\dash3$.
Additionally, avoiding $12\dash3$ or $21\dash3$ requires avoiding $123$ or $213$ by definition, so $\pi\in\Av_n^{312}(123,213)$.

Conversely, given $\pi\in\Av_n^{312}(123,213)$, we want to show that it avoids $12\dash3$ and $21\dash3$. Suppose, for the sake of contradiction, that it contains occurrences of these patterns, and pick an occurrence $\pi_i\pi_{i+1}\pi_{j}$ of minimum width, defined as $j-i+1\ge3$. If $j=i+2$ or $\pi_{i+2}\ge\pi_j$, then $\pi_i\pi_{i+1}\pi_{i+2}$ would be an occurrence of $123$ or a $213$, so we must have $j>i+2$ and $\pi_{i+2}<\pi_j$. But then $\pi_{i+1}\pi_{i+2}\pi_{j}$ would be an occurrence of $12\dash3$ and $21\dash3$ with smaller width than the previously chosen one, contradicting its minimality.
\end{proof}

In the following result, $b_n$ and $B(x)$ are the ones given by Lemma~\ref{lem:bn}.

\begin{theorem}\label{thm:cn}
	Let $a_n=\card{\Av_n^{123}(123,213)}$. Then for all $n\ge5$, \[a_n=a_{n-1}+(n-1) a_{n-2}+b_{n-1},\] with initial conditions $a_0=a_1=a_2=0$, $a_3=1$ and $a_4=2$.
	The corresponding exponential generating function is 
 \[A(x) = e^{x+\frac{x^2}{2}}\left((1+x)-(2+x)\int_0^x e^{-t-\frac{t^2}{2}}dt\right)-1.\]
\end{theorem}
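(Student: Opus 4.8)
The plan is to mirror the structure used in Lemma~\ref{lem:bn}, establishing the recurrence for $a_n=\card{\Av_n^{123}(123,213)}$ by a combinatorial decomposition based on the position of the largest entry, and then converting that recurrence into a first-order linear ODE that can be solved in closed form. As before, the key structural fact is that any permutation avoiding both $123$ and $213$ must have its maximum entry $n$ in one of the first two positions (otherwise some increasing-then-anything or any pair followed by a later larger value would create a $123$ or $213$). So for $\pi\in\Av_n^{123}(123,213)$ with $n\ge5$, I would split into the cases $\pi=n\pi'$ and $\pi=in\pi'$ with $i\in[n-1]$.

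The case $\pi=n\pi'$ contributes $a_{n-1}$: deleting the leading $n$ leaves a permutation of $\S_{n-1}$ that still ends with an occurrence of $123$ and avoids $123,213$ elsewhere, and no occurrences of the patterns can involve the initial $n$ since $n$ is a maximum in the first position (no pattern of length $3$ starting with its largest letter is $123$ or $213$). The case $\pi=in\pi'$ splits further. Here the entries $i$ and $n$ form a $12$ at the front, so for the suffix to still terminate in a $123$ and avoid the patterns internally one needs $\st(\pi')\in\Av_{n-2}^{123}(123,213)$, \emph{unless} the prefix $in$ interacts with $\pi'$ to create a forbidden pattern. The subtlety, and what I expect to be the main obstacle, is accounting correctly for occurrences of $123$ and $213$ that straddle the boundary: a value in $\pi'$ exceeding $i$ would complete an occurrence of $12\dash3$-type behavior, which is exactly why the naive count $a_{n-2}$ must be corrected. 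The extra term $b_{n-1}$ should capture precisely those $\pi=in\pi'$ where removing $n$ (and standardizing) yields a permutation ending in $312$ rather than $123$ — i.e.\ configurations counted by $b_{n-1}=\card{\Av_{n-1}^{312}(123,213)}$ from Lemma~\ref{lem:bn}. I would verify this bijective correspondence carefully, checking that the leading $i$ together with the suffix reassembles into a $312$-terminated avoider of the right length, and confirm the recurrence $a_n=a_{n-1}+(n-1)a_{n-2}+b_{n-1}$ against the stated initial values $a_3=1$, $a_4=2$ directly by enumeration.

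Once the recurrence is in hand, the generating-function step is routine. Multiplying by $x^{n-1}/(n-1)!$ and summing over $n\ge5$ turns the three terms into $A'(x)$, $A(x)$, $xA(x)$, and a $B$-contribution, after matching up the index shifts exactly as in the proof of Lemma~\ref{lem:bn}. Collecting low-order terms using the initial conditions yields an inhomogeneous linear ODE of the form $A'(x)=(1+x)A(x)+g(x)$, where the forcing term $g(x)$ involves $B(x)$ and a few explicit monomials. Since the homogeneous part has the same integrating factor $e^{-x-x^2/2}$ used for $B(x)$, the solution will again feature $e^{x+x^2/2}$ and the integral $\int_0^x e^{-t-t^2/2}\,dt$; substituting the closed form of $B(x)$ from Lemma~\ref{lem:bn} and imposing $A(0)=0$ should collapse everything to the stated expression
\[
A(x)=e^{x+\frac{x^2}{2}}\left((1+x)-(2+x)\int_0^x e^{-t-\frac{t^2}{2}}\,dt\right)-1.
\]
I would double-check the constant and linear coefficients of the resulting series against $a_0=a_1=a_2=0$ to catch any arithmetic slip in the ODE setup, since the main risk here is an off-by-one in the index shifts rather than any conceptual difficulty.
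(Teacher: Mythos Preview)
Your decomposition has a genuine error. You assert that in any $\pi\in\Av_n^{123}(123,213)$ the maximum $n$ must lie in one of the first two positions, but this is false here: the permutation is required to \emph{end} with an occurrence of $123$, so $\pi_n=n$ is perfectly possible. The correct structural fact (and the one the paper uses) is that $n$ lies in position $1$, position $2$, or position $n$, since if $n=\pi_j$ with $3\le j\le n-1$ then $\pi_{j-2}\pi_{j-1}\pi_j$ would be a $123$ or $213$ not at the end.

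Consequently your attribution of the $b_{n-1}$ term is wrong. In the case $\pi=in\pi'$ there is in fact no boundary interaction to worry about: with $n$ in position $2$, the windows $\pi_1\pi_2\pi_3$ and $\pi_2\pi_3\pi_4$ have their maximum in the first or second slot and so can never be $123$ or $213$. Hence that case contributes exactly $(n-1)a_{n-2}$, uncorrected. The term $b_{n-1}$ comes instead from the case $\pi_n=n$: writing $\pi=\pi'n$, the final three entries of $\pi'$ are $\pi_{n-3}\pi_{n-2}\pi_{n-1}$ with $\pi_{n-2}<\pi_{n-1}$, and avoidance of $123,213$ at positions $n-3,n-2,n-1$ forces $\pi_{n-3}>\pi_{n-1}$, so $\pi'\in\Av_{n-1}^{312}(123,213)$. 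Once the recurrence is fixed this way, your ODE plan goes through exactly as you describe.
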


\begin{proof}
In any $\pi\in\Av_n^{123}(123,213)$ with $n\ge5$, the entry $n$ must appear in the first, the second, or the last position. If it appears in the first or the second position, we argue as in the proof of Lemma~\ref{lem:bn} to obtain the terms $a_{n-1}+(n-1)a_{n-2}$. If it appears in the last position, then we can write $\pi=\pi'n$, where $\pi'\in \Av_{n-1}^{312}(123,213)$; indeed, since $\pi_{n-2}<\pi_{n-1}$, we must have $\pi_{n-3}>\pi_{n-1}$, because otherwise $\pi_{n-3}\pi_{n-2}\pi_{n-1}$ would form an occurrence of $123$ or $213$.

From the recurrence, we obtain \[
	\sum_{n\ge5}a_n\frac{x^{n-1}}{(n-1)!}=\sum_{n\ge5}a_{n-1}\frac{x^{n-1}}{(n-1)!}+\sum_{n\ge5}a_{n-2}\frac{x^{n-1}}{(n-2)!}+\sum_{n\ge5}b_{n-1}\frac{x^{n-1}}{(n-1)!},\]
which is equivalent to
\[ A'(x)-\frac{x^2}{2}-2\frac{x^3}{3!}=A(x)-\frac{x^3}{3!}+xA(x)+B(x)-\frac{x^3}{3!},\]
\[A'(x)=(1+x)A(x)+B(x)+\frac{x^2}{2}.\]
Solving this differential equation and using that $A(0)=0$, we get 
\[A(x)=e^{x+\frac{x^2}{2}}\left(\int_0^x(B(t)+\frac{t^2}{2})e^{-t-\frac{t^2}{2}}dt\right),\]
which, using Lemma~\ref{lem:bn}, equals the stated expression.
\end{proof}

The desired probability can now be computed from Theorems~\ref{thm:PrGF} and~\ref{thm:cn}.

\begin{corollary}    
\[\Pr(123\prec213)=A(1)=e^{\frac32}\left(2-3\int_0^1e^{-t-\frac{t^2}{2}}dt\right)-1 \approx0.412.\]
\end{corollary}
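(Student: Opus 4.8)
The plan is to combine the enumeration result of Theorem~\ref{thm:cn} with the probabilistic formula of Theorem~\ref{thm:PrGF}. Theorem~\ref{thm:PrGF} states that $\Pr(123\prec213)=\sum_{n\ge3}\frac{\card{\Av_n^{123}(123,213)}}{n!}=\sum_{n\ge3}\frac{a_n}{n!}$, where $a_n$ is exactly the sequence whose exponential generating function $A(x)$ was computed in Theorem~\ref{thm:cn}. Since $A(x)=\sum_{n\ge0}a_n\frac{x^n}{n!}$ and the terms for $n<3$ vanish (as $a_0=a_1=a_2=0$), the desired probability is simply the evaluation $A(1)$. The whole computation therefore reduces to substituting $x=1$ into the closed form
\[A(x)=e^{x+\frac{x^2}{2}}\left((1+x)-(2+x)\int_0^x e^{-t-\frac{t^2}{2}}\,dt\right)-1.\]

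First I would verify that the sum defining the probability and the power-series expansion of $A(x)$ genuinely agree term by term: both are $\sum_{n}a_n/n!$ evaluated at the argument $1$, so no regrouping or rearrangement is needed beyond observing that the exponential generating function evaluated at $1$ is precisely the probability series. This identification is immediate once one recalls the definition of an exponential generating function. Then I would set $x=1$: the prefactor $e^{x+x^2/2}$ becomes $e^{3/2}$, the polynomial factor $(1+x)$ becomes $2$, the factor $(2+x)$ becomes $3$, and the upper limit of the integral becomes $1$, yielding
\[\Pr(123\prec213)=A(1)=e^{\frac32}\left(2-3\int_0^1 e^{-t-\frac{t^2}{2}}\,dt\right)-1.\]

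The remaining work is purely numerical: one evaluates $\int_0^1 e^{-t-t^2/2}\,dt$ (which has no elementary closed form, but is a definite integral that can be computed to high precision) and multiplies through by $e^{3/2}$ to obtain the approximate value $0.412$. There is essentially no obstacle here; the entire difficulty of this result was already absorbed into Theorem~\ref{thm:cn}, where the recurrence for $a_n$ was established and the associated differential equation solved. The corollary is a direct readout, and the only care required is in the bookkeeping of the constants when specializing $x=1$ and in confirming that the definite integral is evaluated accurately enough to justify the stated three significant digits.
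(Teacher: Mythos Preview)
Your proposal is correct and follows exactly the paper's approach: the paper simply states that the probability is computed from Theorems~\ref{thm:PrGF} and~\ref{thm:cn}, which amounts precisely to recognizing $\Pr(123\prec213)=\sum_{n\ge 3}a_n/n!=A(1)$ and then evaluating the closed form at $x=1$.
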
 


\subsection{$123$ vs.\ $231$}

In the remaining three cases we do not have closed formulas for the probability.
The following result is proved similarly to \cite[Proposition 1]{KitaevMansour}, where
Kitaev and Mansour give a recurrence for the number of permutations avoiding $123$ and $231$, but without the condition of ending with an occurrence of $123$.

\begin{theorem}
	Let $d(n,i)=|\{\pi\in\Av_n^{123}(123,231):\pi_1=i\}|$. Then for all $n\ge5$ and all $i\in[n]$, \[d(n,i)=\sum_{j=1}^{i-1}d(n-1,j)+\sum_{j=i}^{n-2}(n-1-j)d(n-2,j),\] with $d(3,1)=1$, $d(3,2)=d(3,3)=0$; and $d(4,1)=0$, $d(4,2)=d(4,3)=d(4,4)=1$.
\end{theorem}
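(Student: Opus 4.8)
The plan is to classify each $\pi\in\Av_n^{123}(123,231)$ with $\pi_1=i$ according to whether position $1$ is a descent or an ascent, and to strip entries from the front in each case. I would first record the key local structure: since $\pi$ avoids both $123$ and $231$ away from its terminal window, whenever $\pi_t<\pi_{t+1}$ and the triple $\pi_t\pi_{t+1}\pi_{t+2}$ is not the final one, the entry $\pi_{t+2}$ is forced to satisfy $\pi_t<\pi_{t+2}<\pi_{t+1}$, because the only pattern extending an ascending pair that is neither $123$ nor $231$ is $132$. In particular, if $\pi_1<\pi_2$ then $\pi_1<\pi_3<\pi_2$.

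In the descent case $\pi_2<\pi_1=i$, I would delete $\pi_1$ and standardize, obtaining some $\pi'\in\Av_{n-1}^{123}(123,231)$; since $\pi_2<i$ its value is unchanged, so $\pi'_1=\pi_2=j\in[i-1]$. Deleting a front entry only removes windows, so both the avoidance away from the end and the terminal occurrence of $123$ are preserved, and the map is reversed by prepending the value $i$, which recreates a front descent $i>j$ whose triple is automatically pattern-free. This accounts for the first sum $\sum_{j=1}^{i-1}d(n-1,j)$.

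In the ascent case $\pi_2>\pi_1=i$ I would delete the first two entries and standardize, obtaining $\pi''\in\Av_{n-2}^{123}(123,231)$ with $\pi''_1=j$. Using $i<\pi_3<\pi_2$, the removal of the values $i$ and $\pi_2$ leaves the $i-1$ smallest body values equal to $1,\dots,i-1$, which forces $\pi_3>i$ and hence $j\ge i$. The crux is the reverse count: given $\pi''$ with $\pi''_1=j\ge i$, I must count the admissible reinserted pairs $(\pi_1,\pi_2)=(i,\pi_2)$. Writing $\pi_2=i+s$ and tracking whether the $j$-th smallest body value falls before or after the gap created at $i+s$, the requirement $\pi_3<\pi_2$ becomes $s>j-i+1$, leaving exactly $(n-1-j)$ valid values of $\pi_2$. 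Each such choice produces a genuine member of $\Av_n^{123}(123,231)$: the new window at positions $(1,2,3)$ is a $132$, and the window at $(2,3,4)$ begins with the descent $\pi_2>\pi_3$, so neither creates a forbidden pattern, while the body contributes only the patterns allowed by $\pi''$. This yields the second sum.

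The main obstacle I anticipate is the bookkeeping in the ascent case: confirming that $i<\pi_3<\pi_2$ are the only binding constraints, that no spurious occurrence of $123$ or $231$ arises at the junction of the inserted prefix and the body, and that the number of valid $\pi_2$ is precisely $(n-1-j)$. Once these are verified, the two cases partition the set counted by $d(n,i)$ and yield the stated recurrence for $n\ge5$; the listed initial conditions are immediate, since any permutation in $\Av_n^{123}(123,231)$ must end with a $123$.
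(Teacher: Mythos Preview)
Your proposal is correct and follows essentially the same approach as the paper: both split on whether $\pi_1>\pi_2$ or $\pi_1<\pi_2$, strip one or two front entries respectively, and count the valid reinsertions (the paper phrases the ascent-case count as ``$\pi_2$ can take any of the values $\{j+2,\dots,n\}$'', which is exactly your $n-1-j$ choices). Your write-up is more explicit about verifying that the reinserted prefix creates no forbidden $123$ or $231$ at the junction, which the paper leaves implicit.
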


\begin{proof}
To count permutations $\pi=\pi_1\pi_2\dots\pi_n\in\Av_n^{123}(123,231)$ with $\pi_1=i$, we consider two cases.

If $\pi_1>\pi_2$, then $\pi_1\pi_2\pi_3$ cannot form a $123$ or a $231$. Thus, $\st(\pi_2\dots\pi_n)$ can be any permutation in $\Av_{n-1}^{123}(123,231)$ that starts with an entry $j<i$. This gives the first term of the recurrence.

If $\pi_1<\pi_2$, in order for $\pi$ to avoid $123$ and $231$, $\pi_1\pi_2\pi_3$ must form a pattern $132$. In this case, $\st(\pi_3\dots\pi_n)$ can be any permutation in $\Av_{n-2}^{123}(123,231)$ that starts with an entry $j\ge i$. Given such a permutation, after inserting $\pi_1=i$ and shifting the values greater than or equal to $i$ up by one, 
the entry $\pi_2$ can take any of the values $\{j+2,j+3,\dots,n\}$, leaving $n-j-1$ choices.
\end{proof}

\begin{corollary}\label{cor:123231}
 \[\Pr(123\prec231)=\sum_{n\ge3}\frac{1}{n!}\sum_{i=1}^n d(n,i)\approx0.550.\]
\end{corollary}

An interesting consequence of Corollary~\ref{cor:123231} is that, among $123$ and $231$, the pattern $123$ is more likely to appear first in the random sequence $\X$. However, counterintuitively, the expected time until the first occurrence of this pattern is actually larger, that is,
$\ET_{123}>\ET_{231}$, by equation~\eqref{eq:ET123} and Corollary~\ref{cor:ET}.


\subsection{$132$ vs.\ $213$}

For this case and the next we obtain three-parameter recurrences, which are proved similarly.

\begin{theorem}
	Let $s(n; i, j)=|\{\pi\in\Av_n^{132}(132,213):\pi_1=i,\pi_2=j\}|$. Then, for all $n\ge4$ and $i,j\in[n]$ with $i\neq j$, 
 \[
		s(n; i, j)=
  \begin{cases}
		\ds\sum_{\ell=1}^{j-1}s(n-1; j, \ell)+\sum_{\ell=j+1}^{i-1}s(n-1; j, \ell) & \text{if $i>j$},\\
		\ds\sum_{\ell=1}^{i-1}s(n-1; j-1, \ell)+\sum_{\ell=j}^{n-1}s(n-1; j-1, \ell) & \text{ if $i<j$},
  \end{cases}
	\] with initial conditions $s(3; 1,3)=1$, and $s(3; i, j)=0$ for all other $i,j$.
\end{theorem}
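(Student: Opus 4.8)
The plan is to set up a bijective/combinatorial recurrence by classifying permutations in $\Av_n^{132}(132,213)$ according to their first two entries, exactly in the spirit of the earlier single-parameter and two-parameter recurrences. First I would observe what avoiding both $132$ and $213$ forces locally: in any such permutation, once we fix $\pi_1=i$ and $\pi_2=j$, the relative order of $\pi_1,\pi_2$ together with the constraints propagates to the remaining suffix. The natural move is to delete $\pi_1$ (in the descent case $i>j$) or to delete both $\pi_1$ and $\pi_2$ (in the ascent case $i<j$), reindex via $\st$, and track the first one or two entries of the shortened permutation, since $s$ records precisely that data.

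The two cases split as in the statement. When $i>j$ (so $\pi_1>\pi_2$), the triple $\pi_1\pi_2\pi_3$ is automatically free of $123$ and $231$—wait, here the forbidden patterns are $132$ and $213$, so I must instead check that removing $\pi_1$ cannot destroy or create occurrences of $132$ or $213$ ending with the guaranteed $132$ at the tail. Deleting $\pi_1$ leaves a permutation of length $n-1$ whose first entry is the old $\pi_2=j$; the new second entry, call it $\ell$, becomes $\pi_3$, and the avoidance of $132$ and $213$ in the original permutation translates into $\ell\neq j$ together with $\ell$ ranging over values incompatible with forming a forbidden pattern using $\pi_1$. That is what produces the two sums over $\ell\in[1,j-1]$ and $\ell\in[j+1,i-1]$: the excluded range $[i,n-1]$ (suitably shifted) is exactly the set of second entries that would have created a $132$ or $213$ with $\pi_1=i$. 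When $i<j$ (so $\pi_1<\pi_2$), avoiding $132$ and $213$ forces $\pi_1\pi_2\pi_3$ to standardize to a specific pattern, after which deleting both $\pi_1$ and $\pi_2$ and applying $\st$ shifts the first entry to $j-1$ and lets the new second entry $\ell$ range over the complementary allowed set, giving the sums over $\ell\in[1,i-1]$ and $\ell\in[j,n-1]$.

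The key step, and the main obstacle, is verifying carefully that each deletion is a genuine bijection onto the indicated index set: that no forbidden pattern is created or destroyed, that the tail occurrence of $132$ is preserved, and that the shifting of values under $\st$ sends the allowed ranges for $\pi_3$ exactly to the summation bounds written for $\ell$. In particular, the split of the allowed $\ell$-values into two intervals (rather than one) reflects that the value $j$ (respectively $i$) must be omitted because it is already used, and the upper cutoffs $i-1$ and $n-1$ encode precisely which second entries avoid completing a $132$ or $213$ with the deleted prefix. Once the case analysis is pinned down, the recurrence follows by summing over the reachable values of the new leading entries, and the initial conditions at $n=3$ are immediate since the only element of $\Av_3^{132}(132,213)$ is the permutation $231$, giving $s(3;2,3)=1$ and all others zero—so I would double-check the stated base case $s(3;1,3)=1$ against the definition, adjusting the reindexing convention if needed so that it matches.
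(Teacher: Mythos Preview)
Your overall strategy matches the paper's: remove $\pi_1$, standardize the suffix, and track the first two entries of the resulting permutation of length $n-1$. The case $i>j$ is essentially correct as you describe it. However, the case $i<j$ as written does not work. You propose deleting \emph{both} $\pi_1$ and $\pi_2$, but that produces a permutation of length $n-2$, whereas the recurrence refers to $s(n-1;j-1,\ell)$. The correct move (and what the paper does) is to delete only $\pi_1$ here as well: since $i<j$, standardizing sends the old $\pi_2=j$ to $j-1$, which is precisely the first argument in $s(n-1;j-1,\ell)$. Furthermore, avoiding $132$ and $213$ when $\pi_1<\pi_2$ does \emph{not} force $\pi_1\pi_2\pi_3$ into a single pattern; it only forces $\pi_3<i$ or $\pi_3>j$ (so $\st(\pi_1\pi_2\pi_3)\in\{231,123\}$), and after removing $\pi_1=i$ and standardizing, these two ranges for $\pi_3$ become exactly $\ell\in[1,i-1]$ and $\ell\in[j,n-1]$.

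Your base-case check is also off: the unique element of $\Av_3^{132}(132,213)$ is the permutation $132$ itself (it is the only length-$3$ permutation ending in the pattern $132$), not $231$. Hence $s(3;1,3)=1$, as stated in the theorem; there is no reindexing to reconcile.
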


\begin{proof}
	Let $\pi=\pi_1\pi_2\cdots\pi_n\in\Av_n^{132}(132,213)$ with $\pi_1=i$ and $\pi_2=j$. If $i>j$, in order for $\pi$ to avoid $213$, we must have $\pi_3<i$. In this case, $\st(\pi_2\dots\pi_n)$ can be any permutation in $\Av_{n-1}^{132}(132,213)$ that starts with $j$ followed by an entry $\ell<i$.
 
	If $i<j$, in order for $\pi$ to avoid $132$, we must have $\pi_3<i$ or $\pi_3>j$. In this case, 
    $\st(\pi_2\dots\pi_n)$ can be any permutation in $\Av_{n-1}^{132}(132,213)$ that starts with $j-1$ (this entry becomes $\pi_2=j$ after $\pi_1=i$ is inserted) followed by an entry $\ell$ such that $\ell<i$ or $\ell\ge j$.
\end{proof}

\begin{corollary}
     \[\Pr(132\prec213)=\sum_{n\ge3}\sum_{\substack{i,j=1\\ i\neq j}}^n\frac{s(n;i,j)}{n!}\approx0.461.\]
\end{corollary}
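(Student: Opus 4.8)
The plan is to combine Theorem~\ref{thm:PrGF} with the recurrence established in the preceding theorem; the argument is essentially bookkeeping once both ingredients are in place. First I would apply Theorem~\ref{thm:PrGF} with $\sigma=132$ and $\tau=213$, both of length $k=3$, which immediately gives
\[\Pr(132\prec213)=\sum_{n\ge3}\frac{\card{\Av_n^{132}(132,213)}}{n!}.\]

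The key remaining step is to express $\card{\Av_n^{132}(132,213)}$ in terms of the three-parameter quantities $s(n;i,j)$. By definition, $s(n;i,j)$ counts those permutations in $\Av_n^{132}(132,213)$ whose first two entries are $\pi_1=i$ and $\pi_2=j$. Since every permutation has a well-defined pair of distinct leading entries, these sets form a partition of $\Av_n^{132}(132,213)$ as $(i,j)$ ranges over all pairs with $i\neq j$. Summing therefore recovers the total count:
\[\card{\Av_n^{132}(132,213)}=\sum_{\substack{i,j=1\\ i\neq j}}^n s(n;i,j).\]
Substituting this into the displayed formula for $\Pr(132\prec213)$ yields exactly the stated double sum.

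Finally, the numerical estimate $\approx 0.461$ follows by using the recurrence to compute the values $s(n;i,j)$ for small $n$, summing them at each level, and truncating the outer series. Convergence is rapid because $\card{\Av_n^{132}(132,213)}\le n!$, so the tail $\sum_{n>N}\card{\Av_n^{132}(132,213)}/n!$ is bounded by $\sum_{n>N}1$ after dividing, and in fact the avoiding counts grow much slower than $n!$, forcing the terms to decay. I expect no genuine obstacle here: the entire proof is a combination of two results already in hand, and the only non-trivial effort is the routine numerical evaluation of the resulting rapidly convergent series, which does not appear to admit a closed form (consistent with the remark following Theorem~\ref{thm:132vs231} that such probabilities need not be rational or elementary).
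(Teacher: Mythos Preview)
Your proposal is correct and matches the paper's approach: the corollary is stated without proof precisely because it follows immediately from Theorem~\ref{thm:PrGF} together with the observation that the sets $\{\pi\in\Av_n^{132}(132,213):\pi_1=i,\pi_2=j\}$ partition $\Av_n^{132}(132,213)$, which is exactly what you wrote. Your convergence remark is a bit loose (the bound $\sum_{n>N}1$ diverges), but the underlying point---that $\card{\Av_n^{132}(132,213)}/n!\to 0$ fast enough---is fine and not needed for the formal statement anyway.
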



\subsection{$123$ vs.\ $312$}

\begin{theorem}
	Let $t(n; i, j)=|\{\pi\in\Av_n^{123}(123,312):\pi_1=i,\pi_2=j\}|$. Then, for all $n\ge4$ and $i,j\in[n]$ with $i\neq j$,
\[		t(n; i, j)=
  \begin{cases}
		\ds\sum_{\ell=1}^{j-1}t(n-1; j, \ell)+\sum_{\ell=i}^{n-1}t(n-1; j, \ell) & \text{if $i>j$},\\
		\ds\sum_{\ell=1}^{j-2}t(n-1; j-1, \ell) & \text{ if $i<j$},
  \end{cases}
  \]
	with initial conditions $t(3;1,2)=1$, and $t(3;i,j)=0$ for all other $i,j$.
 \end{theorem}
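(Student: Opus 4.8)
The plan is to establish the recurrence by peeling off the first entry. Given $\pi=\pi_1\pi_2\cdots\pi_n\in\Av_n^{123}(123,312)$ with $\pi_1=i$ and $\pi_2=j$, I would pass to the permutation $\rho=\st(\pi_2\cdots\pi_n)\in\S_{n-1}$. Removing a single entry cannot create a new consecutive occurrence, and the last three entries of $\pi$ are untouched, so $\rho$ still ends with an occurrence of $123$ and avoids $123$ and $312$ everywhere else; hence $\rho\in\Av_{n-1}^{123}(123,312)$. Conversely, since we are dealing with \emph{consecutive} patterns, prepending an entry to a permutation in $\Av_{n-1}^{123}(123,312)$ can only create new length-$3$ occurrences at positions $1,2,3$. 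Thus $\pi\mapsto\rho$, together with a record of the first two entries of $\rho$, is a bijection, and the only extra condition to impose is that $\pi_1\pi_2\pi_3$ avoids both $123$ and $312$.

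The two cases of the recurrence correspond to whether $\pi_1\pi_2$ is a descent or an ascent. If $i>j$, then $\pi_1\pi_2$ is a descent, so $123$ is automatically avoided at the front, and $312$ is avoided precisely when $\pi_3<\pi_2=j$ or $\pi_3>\pi_1=i$. Deleting $\pi_1=i$ leaves $j$ fixed (as $j<i$), so $\rho_1=j$; tracking the standardization of $\pi_3$ into $\rho_2=\ell$ shows that $\pi_3<j$ contributes $\ell\in\{1,\dots,j-1\}$ (value preserved), while $\pi_3>i$ contributes $\ell\in\{i,\dots,n-1\}$ (value decremented by one), yielding $\sum_{\ell=1}^{j-1}t(n-1;j,\ell)+\sum_{\ell=i}^{n-1}t(n-1;j,\ell)$. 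If instead $i<j$, then $\pi_1\pi_2$ is an ascent, so $312$ is automatically avoided at the front, and avoiding $123$ forces $\pi_3<\pi_2=j$. Now deleting $\pi_1=i$ shifts $j$ down to $\rho_1=j-1$, and the admissible values $\pi_3\in\{1,\dots,j-1\}\setminus\{i\}$ standardize to exactly $\ell\in\{1,\dots,j-2\}$, giving $\sum_{\ell=1}^{j-2}t(n-1;j-1,\ell)$.

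I expect the main obstacle to be the careful bookkeeping of the standardization in each case: one must verify that after removing $\pi_1=i$, the entry $\pi_3$ lands in precisely the index range appearing in the corresponding sum, accounting for whether $\pi_3$ lies below or above $i$ (which determines whether its value is preserved or decremented by one), and that the resulting map is a genuine bijection in both directions. The base case is immediate, since the only permutation in $\Av_3^{123}(123,312)$ is $123$ itself, giving $t(3;1,2)=1$ and $t(3;i,j)=0$ otherwise.
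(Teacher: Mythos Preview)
Your proof is correct and follows essentially the same approach as the paper: both remove $\pi_1$, standardize $\pi_2\cdots\pi_n$, and analyze the constraint on $\pi_3$ imposed by forbidding $123$ or $312$ at the front, splitting into the cases $i>j$ and $i<j$. Your write-up is in fact more explicit than the paper's about why the map is a bijection and about the standardization bookkeeping, but the underlying argument is identical.
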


\begin{proof}
	Let $\pi=\pi_1\pi_2\cdots\pi_n\in\Av_n^{123}(123,312)$ with $\pi_1=i$ and $\pi_2=j$. If $i>j$, in order for $\pi$ to avoid $312$, we must have $\pi_3<j$ or $\pi_3>i$. In this case, $\st(\pi_2\dots\pi_n)$ can be any permutation in $\Av_{n-1}^{123}(123,312)$ that starts with $j$ followed by an entry  $\ell$ such that $\ell<j$ or $\ell\ge i$.
 
	If $i<j$, in order for $\pi$ to avoid $123$, we must have $\pi_3<j$. In this case, $\st(\pi_2\dots\pi_n)$ can be any permutation in $\Av_{n-1}^{123}(123,312)$ that starts with $j-1$ (this entry becomes $\pi_2=j$ after $\pi_1=i$ is inserted) followed by an entry $\ell<j-1$.
\end{proof}

\begin{corollary}
 \[\Pr(123\prec312)=\sum_{n\ge3}\sum_{\substack{i,j=1\\ i\neq j}}^n\frac{t(n;i,j)}{n!}\approx0.342.\]
\end{corollary}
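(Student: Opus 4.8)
The plan is to read this corollary as an immediate consequence of Theorem~\ref{thm:PrGF} applied to the incomparable pair $\sigma=123$, $\tau=312$, combined with the recurrence for $t(n;i,j)$ established in the preceding theorem. Since neither $123$ nor $312$ contains the other as a consecutive pattern, Theorem~\ref{thm:PrGF} applies directly and gives
\[
\Pr(123\prec312)=\sum_{n\ge3}\frac{\card{\Av_n^{123}(123,312)}}{n!}.
\]
First I would establish the refinement step: by definition $t(n;i,j)$ counts exactly those $\pi\in\Av_n^{123}(123,312)$ with $\pi_1=i$ and $\pi_2=j$. As $(i,j)$ ranges over all pairs in $[n]$ with $i\neq j$ — the condition $i\neq j$ being automatic, since the first two entries of any permutation are distinct — these subsets are disjoint and exhaust $\Av_n^{123}(123,312)$. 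Hence $\card{\Av_n^{123}(123,312)}=\sum_{i\neq j}t(n;i,j)$, and substituting this into the previous display yields precisely the stated double sum. This proves the equality part of the corollary with no further work.

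It then remains to certify the numerical value $\approx0.342$. For this I would use the recurrence of the preceding theorem, together with its initial data at $n=3$ and $n=4$, to tabulate the arrays $t(n;i,j)$ for successively larger $n$, accumulating the partial sums $n!^{-1}\sum_{i\neq j}t(n;i,j)$ at each stage. Convergence is guaranteed because the left-hand side is a probability, hence at most $1$, and all terms are nonnegative. Moreover the terms decay geometrically: the quantity $n!^{-1}\sum_{i\neq j}t(n;i,j)$ is the probability that the first occurrence of either pattern happens exactly at draw $n$ and is an occurrence of $123$, which is bounded by the probability that neither pattern has yet appeared after $n-3$ draws, and this survival probability decays geometrically in $n$. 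Consequently, truncating the series at a modest value of $n$ and bounding the tail explicitly certifies the first three significant digits.

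The only point requiring care — rather than a genuine obstacle — is the partition step: I must confirm that the recurrence and its initial conditions really enumerate permutations ending in an occurrence of $123$ while avoiding both $123$ and $312$ elsewhere, so that summing $t(n;i,j)$ over all admissible first-two-entry pairs reproduces $\card{\Av_n^{123}(123,312)}$ with neither double counting nor omission. Since that enumeration is exactly what the preceding theorem provides, once the bookkeeping is checked the equality is immediate from Theorem~\ref{thm:PrGF}, and the remaining content is purely the evaluation of a rapidly converging nonnegative series.
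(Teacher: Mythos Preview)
Your proposal is correct and is exactly the intended argument: the corollary is stated without proof in the paper precisely because it follows immediately from Theorem~\ref{thm:PrGF} together with the partition $\card{\Av_n^{123}(123,312)}=\sum_{i\neq j}t(n;i,j)$, just as you describe. One small slip: the preceding theorem gives initial data only at $n=3$ (the recurrence is valid for $n\ge4$), not at $n=3$ and $n=4$ as you wrote, but this does not affect the argument.
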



\section{Some patterns of length 4}
\label{Sec:Length4}

In this section we consider patterns of length 4. Even though we will not give formulas for $\Pr(\sigma\prec\tau)$ for all pairs $\sigma,\tau\in\S_4$, we will describe all the tied pairs, i.e., pairs where this probability is $1/2$.
Table~\ref{tab:length4} summarizes some of the numerical data, computed using
Theorem~\ref{thm:PrGF}, with the series truncated at $n=11$. The table describes, for each pair of patterns of length $4$, which pattern is more likely to appear first in a random sequence.

\begin{table}[tbh]
    \centering
    \scalebox{0.8}{\begin{tabular}{c||c|c|c|c|c|c|c|c|c|c|c|c|c|c|c|c|c|c|c|c|c|c|c|c|}
         \backslashbox{\cya{$\sigma$}}{\textcolor{magenta}{$\tau$}} & \rot{1234} & \rot{1243} & \rot{1324} & \rot{1342} & \rot{1423} & \rot{1432} & \rot{2134} & \rot{2143} & \rot{2314} & \rot{2341} & \rot{2413} & \rot{2431} & \rot{3124} & \rot{3142} & \rot{3214} & \rot{3241} & \rot{3412} & \rot{3421} & \rot{4123} & \rot{4132} & \rot{4213} & \rot{4231} & \rot{4312} & \rot{4321} \\\hline\hline
         \cya{1234} & &  \tie & \loss & \tie & \loss & \loss & \loss & \loss & \loss & \win  & \loss & \loss & \loss & \loss & \loss & \loss & \loss & \loss & \loss & \loss & \loss & \loss & \loss &  \tie\\ \hline
         \cya{1243} &  \tie & & \win &  \tie & \win & \win & \loss & \win & \win & \loss & \win & \win & \loss & \win & \win & \win & \loss & \loss & \loss & \win & \win & \win &  \tie & \win \\  \hline
         \cya{1324} & \win & \loss & &  \tie &  \tie & \win & \win & \win & \loss & \loss & \win & \win & \loss & \win & \loss & \win & \win & \loss & \loss & \loss & \loss &  \tie & \loss & \win \\ \hline
         \cya{1342} &  \tie &  \tie &  \tie & & \loss &  \tie & \loss & \win & \win & \loss & \win & \loss & \loss & \win & \win & \win & \win & \win & \loss & \win &  \tie & \win & \loss & \win \\ \hline
         \cya{1423} & \win & \loss &  \tie & \win & &  \tie & \loss & \loss & \loss & \win & \win & \win & \win & \loss & \loss & \loss & \win & \win & \win &  \tie & \loss & \win & \loss & \win \\ \hline
         \cya{1432} & \win & \loss & \loss &  \tie &  \tie & & \win & \loss & \loss & \loss & \win & \loss & \win & \loss & \win & \win & \loss & \loss &  \tie & \loss & \win & \win & \win & \win \\ \hline
         \cya{2134} & \win & \win & \loss & \win & \win & \loss & &  \tie & \loss & \win & \win & \loss & \win & \win & \loss &  \tie & \win &  \tie & \win & \loss & \loss & \win & \win & \win \\ \hline
         \cya{2143} & \win & \loss & \loss & \loss & \win & \win &  \tie & & \loss & \loss & \win & \win & \win & \win & \loss & \win &  \tie & \loss & \win & \loss & \loss & \loss & \win & \win \\ \hline
         \cya{2314} & \win & \loss & \win & \loss & \win & \win & \win & \win & & \loss & \win & \win & \loss & \win & \loss &  \tie & \loss &  \tie & \loss & \win & \loss & \loss & \loss & \win \\ \hline
         \cya{2341} & \loss & \win & \win & \win & \loss & \win & \loss & \win & \win & & \win &  \tie & \loss & \win &  \tie & \win & \win & \win & \loss & \win & \loss & \win & \loss & \win \\ \hline
         \cya{2413} & \win & \loss & \loss & \loss & \loss & \loss & \loss & \loss & \loss & \loss & &  \tie & \win &  \tie & \loss & \loss & \loss & \loss & \win & \win & \loss & \loss & \loss & \win \\ \hline
         \cya{2431} & \win & \loss & \loss & \win & \loss & \win & \win & \loss & \loss &  \tie &  \tie & &  \tie & \loss & \win & \win & \loss & \loss & \loss & \loss & \win & \win & \win & \win \\ \hline
         \cya{3124} & \win & \win & \win & \win & \loss & \loss & \loss & \loss & \win & \win & \loss &  \tie & &  \tie &  \tie & \loss & \loss & \win & \win & \loss & \win & \loss & \loss & \win \\ \hline
         \cya{3142} & \win & \loss & \loss & \loss & \win & \win & \loss & \loss & \loss & \loss &  \tie & \win &  \tie & & \loss & \loss & \loss & \loss & \loss & \loss & \loss & \loss & \loss & \win \\ \hline
         \cya{3214} & \win & \loss & \win & \loss & \win & \loss & \win & \win & \win &  \tie & \win & \loss &  \tie & \win & & \win & \win & \loss & \win & \loss & \win & \win & \win & \loss \\ \hline
         \cya{3241} & \win & \loss & \loss & \loss & \win & \loss &  \tie & \loss &  \tie & \loss & \win & \loss & \win & \win & \loss & & \win & \win & \win & \win & \loss & \win & \loss & \win \\ \hline
         \cya{3412} & \win & \win & \loss & \loss & \loss & \win & \loss &  \tie & \win & \loss & \win & \win & \win & \win & \loss & \loss & &  \tie & \win & \win & \loss & \loss & \loss & \win \\ \hline
         \cya{3421} & \win & \win & \win & \loss & \loss & \win &  \tie & \win &  \tie & \loss & \win & \win & \loss & \win & \win & \loss &  \tie & & \loss & \win & \win & \loss & \win & \win \\ \hline
         \cya{4123} & \win & \win & \win & \win & \loss &  \tie & \loss & \loss & \win & \win & \loss & \win & \loss & \win & \loss & \loss & \loss & \win & &  \tie &  \tie & \loss & \loss & \win \\ \hline
         \cya{4132} & \win & \loss & \win & \loss &  \tie & \win & \win & \win & \loss & \loss & \loss & \win & \win & \win & \win & \loss & \loss & \loss &  \tie & & \win &  \tie & \loss & \win \\ \hline
         \cya{4213} & \win & \loss & \win &  \tie & \win & \loss & \win & \win & \win & \win & \win & \loss & \loss & \win & \loss & \win & \win & \loss &  \tie & \loss & &  \tie &  \tie &  \tie \\ \hline
         \cya{4231} & \win & \loss &  \tie & \loss & \loss & \loss & \loss & \win & \win & \loss & \win & \loss & \win & \win & \loss & \loss & \win & \win & \win &  \tie &  \tie & & \loss & \win \\ \hline
         \cya{4312} & \win &  \tie & \win & \win & \win & \loss & \loss & \loss & \win & \win & \win & \loss & \win & \win & \loss & \win & \win & \loss & \win & \win &  \tie & \win & &  \tie \\ \hline
         \cya{4321} &  \tie & \loss & \loss & \loss & \loss & \loss & \loss & \loss & \loss & \loss & \loss & \loss & \loss & \loss & \win & \loss & \loss & \loss & \loss & \loss &  \tie & \loss &  \tie & \\ \hline
    \end{tabular}}
    \caption{The probabilities $\Pr(\sigma\prec\tau)$ for all pairs of length $4$. Cells in \textcolor{blue}{blue} represent ties, cells in \textcolor{magenta}{magenta} represent values $<1/2$ (i.e., \textcolor{magenta}{$\tau$} is more likely to appear first), and cells in \textcolor{cyan}{cyan} represent values $>1/2$ (i.e., \cya{$\sigma$} is more likely to appear first).}
    \label{tab:length4}
\end{table}

The numerical data suggests that, in addition to the twelve pairs of the form $\sigma\equiv\comp{\sigma}$, there are $11$ non-trivial tied pairs $\sigma\equiv\tau$, along with the corresponding pairs $\comp{\sigma}\equiv\comp{\tau}$ that follow from these by Observation~\ref{obs}. All the tied pairs of patterns of length~4 are depicted in Figure~\ref{fig:TieLength4}.

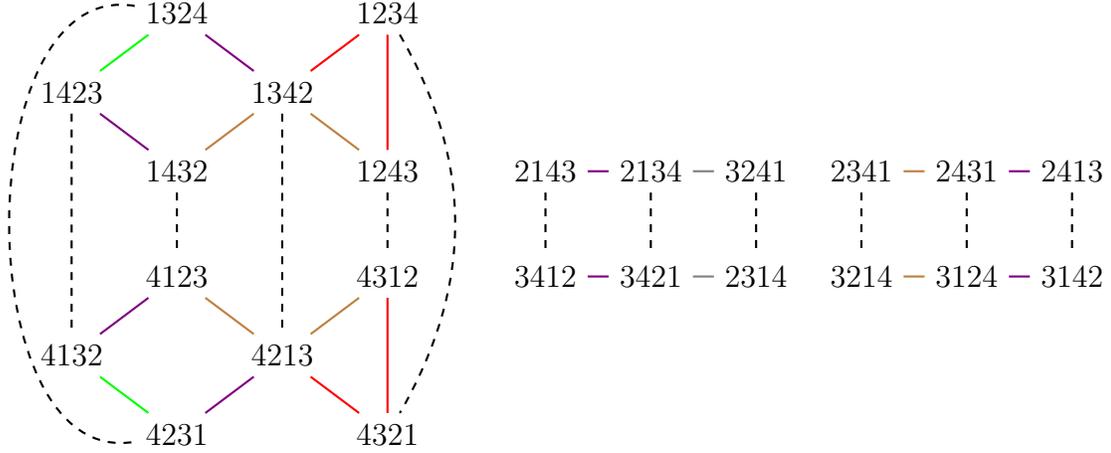
\begin{figure}[htb]
    \centering
    \begin{tikzpicture}[scale=.685]
        \node (1423) at (0,5) {$1423$};
        \node (1324) at (2,6.5) {$1324$};
        \node (1432) at (2,3.5) {$1432$};
        \node (1342) at (4,5) {$1342$};
        \node (1234) at (6,6.5) {$1234$};
        \node (1243) at (6,3.5) {$1243$};
        \node (2143) at (9,3.5) {$2143$};
        \node (2134) at (11,3.5) {$2134$};
        \node (3241) at (13,3.5) {$3241$};
        \node (2341) at (15,3.5) {$2341$};
        \node (2431) at (17,3.5) {$2431$};
        \node (2413) at (19,3.5) {$2413$};
        \draw [green,thick] (1423)--(1324);
        \draw [violet,thick] (1423)--(1432);
        \draw [violet,thick] (1324)--(1342);
        \draw [brown,thick] (1432)--(1342);
        \draw [red,thick] (1342)--(1234);
        \draw [brown,thick] (1342)--(1243); 
        \draw [red,thick] (1234)--(1243);
        \draw [violet,thick] (2143)--(2134);
        \draw [gray,thick] (2134)--(3241);
        \draw [brown,thick] (2341)--(2431);
        \draw [violet,thick] (2431)--(2413);
        \node (4132) at (0,0) {$4132$};
        \node (4123) at (2,1.5) {$4123$};
        \node (4231) at (2,-1.5) {$4231$};
        \node (4213) at (4,0) {$4213$};
        \node (4312) at (6,1.5) {$4312$};
        \node (4321) at (6,-1.5) {$4321$};
        \node (3412) at (9,1.5) {$3412$};
        \node (3421) at (11,1.5) {$3421$};
        \node (2314) at (13,1.5) {$2314$};
        \node (3214) at (15,1.5) {$3214$};
        \node (3124) at (17,1.5) {$3124$};
        \node (3142) at (19,1.5) {$3142$};
        \draw [violet,thick] (4132)--(4123);
        \draw [green,thick] (4132)--(4231);
        \draw [brown,thick] (4123)--(4213);
        \draw [violet,thick] (4231)--(4213);
        \draw [brown,thick] (4213)--(4312); 
        \draw [red,thick] (4213)--(4321);
        \draw [red,thick] (4312)--(4321);
        \draw [violet,thick] (3412)--(3421);
        \draw [gray,thick] (3421)--(2314);
        \draw [brown,thick] (3214)--(3124);
        \draw [violet,thick] (3124)--(3142);
        \draw [dashed,thick] (1423)--(4132);
        \draw [dashed,thick] (1432)--(4123);
        \draw [dashed,thick] (1324)to [bend right=100](4231);
        \draw [dashed,thick] (1342)--(4213);
        \draw [dashed,thick] (1234) to [bend left](4321);
        \draw [dashed,thick] (1243)--(4312);
        \draw [dashed,thick] (2143)--(3412);
        \draw [dashed,thick] (2134)--(3421);
        \draw [dashed,thick] (3241)--(2314);
        \draw [dashed,thick] (2341)--(3214);
        \draw [dashed,thick] (2431)--(3124);
        \draw [dashed,thick] (2413)--(3142);
    \end{tikzpicture}
    \caption{All tied pairs of patterns in $\S_4$. Dashed edges indicate pairs of the form $\sigma\equiv\comp{\sigma}$. The colors of the solid edges refer to the theorem proving that they are tied patterns: \textcolor{red}{red} for Theorem~\ref{thm:iotarho}, \textcolor{brown}{brown} for Theorem~\ref{thm:NoOverlap} with $j=1$ (including Corollary~\ref{cor:rhorho'}), \textcolor{violet}{violet} for Theorem~\ref{thm:NoOverlap} with $j=2$, \textcolor{green}{green} for Theorem~\ref{thm:1k2k-1general}, and \textcolor{gray}{gray} for Theorem~\ref{thm:2134_3241}.}
    \label{fig:TieLength4}
\end{figure}

The results from Section~\ref{Sec:GeneralProbability} (Theorems \ref{thm:iotarho}, \ref{thm:NoOverlap}, \ref{thm:1k2k-1general} and Corollary~\ref{cor:rhorho'}), along with Observation~\ref{obs}, explain all of these ties except for the pair $2134\equiv3241$ (and its complement $3421\equiv2314$). In this section we will give a proof of this relation.
Interestingly, there does not appear to be an obvious bijection between $\Av_n^{2134}(2134,3241)$ and $\Av_n^{3241}(3241,2134)$. Instead, our proof will construct a bijection between different sets, and then apply the principle of inclusion-exclusion.

Following~\cite{DwyerSergi}, for $\sigma,\tau\in\S_k$ and $\pi\in\S_n$, define 
$$\Em(\{\sigma,\tau\},\pi)=\{i:\st(\pi_i\dots\pi_{i+k-1})\in\{\sigma,\tau\}\}.$$ 
This set keeps track of the positions of the occurrences of $\sigma$ and $\tau$ in $\pi$. For any $S\subseteq\Em(\{\sigma,\tau\},\pi)$, we call 
the ordered pair $(\pi,S)$ a \emph{marked permutation}. The  occurrences of $\sigma$ and $\tau$ in positions in $S$ are called \emph{marked occurrences}.
Suppose that the elements of $S$ are $i_1<i_2<\dots<i_s$. We say that $(\pi,S)$ is a \emph{$\sigma,\tau$-tight cluster} (or simply a  \emph{tight cluster} when the patterns are clear from the context) if $i_1=1$, $i_s=n-k+1$, and 
$i_{j+1}-i_j\le k-2$ for all $j$. Note that, in a tight cluster, adjacent marked occurrences are required to overlap in at least two positions, whereas in the usual definition of a cluster (see e.g.~\cite{ElizaldeNoy5}), they could overlap in one position.

\begin{lemma}\label{lem:moccurrences}
For all $S\subseteq[n-3]$, there is a bijection 
\begin{equation}\label{eq:bij}
\{\pi\in\S_n:\Em(\{2134,3241\},\pi)\supseteq S\}\longrightarrow
\{\pi'\in\S_n:\Em(\{2314,3421\},\pi')\supseteq S\}
\end{equation}
such that, if $\pi$ maps to $\pi'$, then
$\pi_1=\pi'_1$ and $\pi_n=\pi'_n$.
Additionally, if $n-3\in S$, then we have
$\st(\pi_{n-3}\pi_{n-2}\pi_{n-1}\pi_n)=2134$ if and only if $\st(\pi'_{n-3}\pi'_{n-2}\pi'_{n-1}\pi'_n)=2314$.
\end{lemma}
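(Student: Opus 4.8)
The plan is to realize the bijection by a single explicit involution. First I would record the key structural characterization: the two source patterns $2134$ and $3241$ are exactly the windows $w_1w_2w_3w_4$ with $w_2<w_1<w_3$ and $w_4\notin(w_2,w_3)$ (the case $w_4>w_3$ giving $2134$ and $w_4<w_2$ giving $3241$), while the target patterns $2314$ and $3421$ are exactly those with $w_3<w_1<w_2$ and $w_4\notin(w_3,w_2)$. Comparing the two, interchanging the middle two entries of a window sends $2134\leftrightarrow2314$ and $3241\leftrightarrow3421$. This motivates defining $\phi_S$ to be the map that, for every $j\in S$, swaps the entries in positions $j+1$ and $j+2$, and setting $\pi'=\phi_S(\pi)$.

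The next step is to check that $\phi_S$ is well defined and an involution. I would show that in any permutation two occurrences of patterns from $\{2134,3241\}$ (and likewise from $\{2314,3421\}$) cannot start at consecutive positions: if positions $i$ and $i+1$ were both occurrences, the condition $w_2<w_1<w_3$ applied to the two windows would force both $\pi_{i+1}<\pi_{i+2}$ and $\pi_{i+2}<\pi_{i+1}$. Hence the elements of $S$ are pairwise at distance at least $2$, so the index pairs $\{j+1,j+2\}$ with $j\in S$ are pairwise disjoint; thus $\phi_S$ is a product of commuting transpositions and is its own inverse. Since every $j\in S$ satisfies $1\le j\le n-3$, positions $1$ and $n$ are never moved, which gives $\pi_1=\pi'_1$ and $\pi_n=\pi'_n$ immediately.

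The heart of the argument, and the main obstacle, is to prove that if $\pi$ lies in the left-hand set then each $j\in S$ is an occurrence of $\{2314,3421\}$ in $\pi'$. The difficulty is that occurrences may overlap in two positions (distance exactly $2$ is allowed), so the first entry of the window at $j$ is displaced ($\pi'_j=\pi_{j-1}$) precisely when $j-2\in S$, and its last entry is displaced ($\pi'_{j+3}=\pi_{j+4}$) precisely when $j+2\in S$. I would therefore verify the two required inequalities $\pi'_{j+2}<\pi'_j<\pi'_{j+1}$ and $\pi'_{j+3}\notin(\pi'_{j+2},\pi'_{j+1})$ by cases: the first inequality depends only on whether $j-2\in S$, the second only on whether $j+2\in S$, so the conditions decouple. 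When $j-2\notin S$ the first inequality is just the source condition $\pi_{j+1}<\pi_j<\pi_{j+2}$; when $j-2\in S$, combining $\pi_{j+1}\notin(\pi_{j-1},\pi_j)$ from the window at $j-2$ with $\pi_{j+1}<\pi_j$ from the window at $j$ yields $\pi_{j+1}<\pi_{j-1}$, and $\pi_{j-1}<\pi_j<\pi_{j+2}$ gives the upper bound. The case $j+2\in S$ for the last entry is handled symmetrically, using that the window at $j+2$ forces $\pi_{j+4}>\pi_{j+2}>\pi_{j+1}$. Because $\phi_S$ is an involution and the identical computation read in reverse carries the right-hand set into the left-hand one, $\phi_S$ is the desired bijection.

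Finally, for the supplementary claim I would specialize to $j=n-3$. Since $S\subseteq[n-3]$, the index $j+2=n-1$ is not in $S$, so the last entry of the window is fixed and $\pi'_n=\pi_n$. In the source window the largest of the first three entries is the value $\pi_{n-1}$ (window-position $3$), and in the target window the largest of the first three entries is $\pi'_{n-2}=\pi_{n-1}$ (window-position $2$); in both cases this maximum equals the number $\pi_{n-1}$ regardless of any displacement of the first entry. Consequently, in either permutation the last entry $\pi_n$ is the largest of the four window entries exactly when $\pi_n>\pi_{n-1}$. As $2134$ is the source pattern whose last entry is largest and $2314$ is the target pattern whose last entry is largest, this proves that $\st(\pi_{n-3}\pi_{n-2}\pi_{n-1}\pi_n)=2134$ if and only if $\st(\pi'_{n-3}\pi'_{n-2}\pi'_{n-1}\pi'_n)=2314$, completing the proof.
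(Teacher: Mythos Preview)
Your proposal is correct and in fact defines the same bijection as the paper: swap the entries in positions $j+1$ and $j+2$ for each $j\in S$. The paper arrives at this map by first partitioning $S$ into maximal arithmetic progressions of common difference~$2$, showing that the restriction of $\pi$ to each such block is a rigid tight cluster (determined entirely by whether its last marked occurrence is $2134$ or $3241$), and then observing that the desired transformation on each cluster is exactly the product of these adjacent swaps. You instead posit the involution from the outset, motivated by the observation that $2134\leftrightarrow2314$ and $3241\leftrightarrow3421$ under a middle-entry swap, and verify window by window that the target conditions hold, splitting only on whether $j-2$ or $j+2$ lies in $S$. Your route is slightly more economical, since it avoids characterizing the full cluster structure; the paper's route, on the other hand, makes the rigidity of the overlapping occurrences explicit, which explains conceptually why the simple swap works. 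Both arguments hinge on the same non-$3$-overlapping observation to rule out consecutive elements of $S$, and both use the preservation of the outermost positions of each window to decouple the blocks and fix $\pi_1,\pi_n$.
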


\begin{proof}
For the sets in equation~\eqref{eq:bij} to be nonempty, $S$ cannot have two consecutive entries, since the patterns $2134$ and $3241$ are non-$3$-overlapping, and so are the patterns $2314$ and $3421$.

Partition $S$ into maximal arithmetic sequences with difference $2$. In other words, consider the finest partition such that if $j,j+2\in S$ for some $j$, then $j$ and $j+2$ belong to the same block. Suppose that $B=\{j,j+2,j+4,\dots,j+2(t-1)\}$ is one of the blocks, for some $j,t\ge1$.

Let $\pi$ be a permutation in left-hand side of~\eqref{eq:bij}, and let \begin{equation}\label{eq:alpha}
\alpha=\alpha_B=\st(\pi_j\pi_{j+1}\dots\pi_{j+2t+1})\in\S_{2t+2},
\end{equation}
that is, the permutation obtained by restricting to the marked occurrences in positions $B$. Then the pair $(\alpha,\{1,3,\dots,2t-1\})$ is a $2134,3241$-tight cluster.
Note that for occurrences of these patterns to overlap in two positions, the first pattern must be a $3241$, whereas the second can be either $3241$ or $2134$. Thus, the marked occurrences in the above tight cluster must be all $3241$ except for the last one, which can be either $3241$ or $2134$. We claim that, in each of these two cases, the permutation $\alpha$ is uniquely determined. 

Indeed, when all the marked occurrences are $3241$, we must have 
$\alpha_{2i+2}<\alpha_{2i}<\alpha_{2i-1}<\alpha_{2i+1}$
for all $i\in[t]$, and so
\[\alpha_{2t+2}<\alpha_{2t}<\dots<\alpha_2<\alpha_1<\alpha_3<\cdots<\alpha_{2t+1},\]
or equivalently, 
\begin{equation}\label{eq:alpha1}
    \alpha=(t+2)(t+1)(t+3)t(t+4)(t-1)\dots(2t+1)2(2t+2)1.
\end{equation}
When the last occurrence is $2134$, we have $\alpha_{2i+2}<\alpha_{2i}<\alpha_{2i-1}<\alpha_{2i+1}$
for all $i\in[t-1]$, and 
$\alpha_{2t}<\alpha_{2t-1}<\alpha_{2t+1}<\alpha_{2t+2}$, and so
\[\alpha_{2t}<\alpha_{2t-2}<\dots<\alpha_2<\alpha_1<\alpha_3<\cdots<\alpha_{2t+1}<\alpha_{2t+2},\]
or equivalently, 
\begin{equation}\label{eq:alpha2}
\alpha=(t+1)t(t+2)(t-1)(t+3)(t-2)\dots(2t)1(2t+1)(2t+2).
\end{equation}

Similarly, if $\pi'$ is in the right-hand side of~\eqref{eq:bij}, and $\alpha'=\st(\pi'_\ell\pi'_{\ell+1}\dots\pi'_{\ell+2t+1})\in\S_{2t+2}$, the pair $(\alpha',\{1,3,\dots,2t-1\})$ is a $2314,3421$-tight cluster. The marked occurrences must be all $2314$ except for the last one, which can be either $2314$ or $3421$. In the first case, we must have
\begin{equation}\label{eq:alpha'1}
\alpha'=(t+1)(t+2)t(t+3)(t-1)(t+4)\dots2(2t+1)1(2t+2).
\end{equation}
In the second case,
\begin{equation}\label{eq:alpha'2}
\alpha'=(t+2)(t+3)(t+1)(t+4)t(t+5)\dots3(2t+2)21.
\end{equation}

Now we are ready to describe the bijection. Given $\pi$ in the left-hand side of~\eqref{eq:bij}, for each block $B$ in the partition of $S$, we consider the permutation $\alpha$ obtained by restricting to that block. 
If $\alpha$ is given by equation~\eqref{eq:alpha1}, we turn it into 
$\alpha'$ as given by equation~\eqref{eq:alpha'2}, by permuting the corresponding entries of $\pi$; if $\alpha$ is given by equation~\eqref{eq:alpha2}, we turn it into $\alpha'$ as given by equation~\eqref{eq:alpha'1}.  We let $\pi'$ be the resulting permutation after these changes have been applied to each block $B$. Note that this transformation has a simple description in both cases: $\alpha'$ is obtained from $\alpha$ by transposing the entries in positions $2i$ and $2i+1$ for each $i\in[t]$. Thus, $\pi'$ is obtained from $\pi$ by transposing, for each block $B=\{j,j+2,j+4,\dots,j+2(t-1)\}$ as above, the entries in positions $j+2i-1$ and $j+2i$ for each $i\in[t]$.

It is important that, in both cases, the first and the last entry of $\alpha$ are preserved. This property guarantees that the changes applied to the restrictions of $\pi$ to different blocks $B_1$ and $B_2$ are independent. Indeed, since elements in $B_1$ and $B_2$ differ by at least $3$ by construction, the restrictions $\alpha_{B_1}$ and $\alpha_{B_2}$ do not overlap in more than one position. 
The same property also guarantees that $\pi_1=\pi'_1$ and $\pi_n=\pi'_n$.

Finally, if $n-3\in S$, then the last marked occurrence of the last block is $\pi_{n-3}\pi_{n-2}\pi_{n-1}\pi_n$. 
This must be an occurrence of $2134$ or $3241$. In both cases, the entries $\pi_{n-2}$ and $\pi_{n-1}$ are transposed, so $\pi'$ ends with an occurrence of $2314$ or $3421$, respectively. 
\end{proof}

\begin{example}
    Let $n=15$ and $S=\{3,5,7,10,12\}\subseteq[12]$, and let
    $$\pi=7\,8\,\ul{6}\,5\,\ul{9}\,2\,\ol{11}\,1\,12\,\ul{13}\,10\,\ul{14}\,4\,15\,3,$$
    which satisfies $\Em(\{2134,3241\},\pi)\supseteq S$ (the initial positions of marked occurrences of $3241$ are underlined, and those of $2134$ are overlined). Following the proof of Lemma~\ref{lem:moccurrences}, we partition $S$ into two blocks $\{3,5,7\}$ and $\{10,12\}$. Thus, $\pi'$ is obtained from $\pi$ by transposing the entries in positions $(4,5)$, $(6,7)$ and $(8,9)$ for the first block, and $(11,12)$ and $(13,14)$ for the second block, resulting in 
    $$\pi'=7\,8\,\ul{6}\,9\,\ul{5}\,11\,\ul{2}\,12\,1\,\ol{13}\,14\,\ol{10}\,15\,4\,3,$$
    which satisfies $\Em(\{2314,3421\},\pi')\supseteq S$ (the initial positions of marked occurrences of $2314$ are underlined, and those of $3421$ are overlined).
\end{example}

We remark that, in the above example, $\pi'$ has an additional occurrence of $2314$ at the very beginning, which is not marked (i.e., $1\notin S$), whereas $\pi$ did not have any occurrence of 
$2134$ or $3241$ in that position. This highlights the fact that the bijection in Lemma~\ref{lem:moccurrences} only works once we fix a subset $S$ of marked occurrences. This is why we have to resort to inclusion-exclusion to prove the following lemma.

\begin{lemma}\label{lem:avoidboth}
For all $a,b\in[n]$,
	\[\card{\{\pi\in\Av_n^{2134}(2134,3241):\pi_1=a,\pi_n=b\}}=\card{\{\pi'\in\Av_n^{2314}(2314,3421):\pi'_1=a,\pi'_n=b\}}.\]
\end{lemma}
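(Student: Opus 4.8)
The plan is to prove Lemma~\ref{lem:avoidboth} by inclusion-exclusion, using Lemma~\ref{lem:moccurrences} as the key input. The quantity $\card{\{\pi\in\Av_n^{2134}(2134,3241):\pi_1=a,\pi_n=b\}}$ counts permutations that end with a $2134$ at positions $n-3,\dots,n$ and have no other occurrence of $2134$ or $3241$. I would express this ``exactly one marked occurrence, located at the end'' condition as a signed sum over superset-of-$S$ counts. Concretely, for each $S\subseteq[n-3]$ let $N(S)=\card{\{\pi\in\S_n:\Em(\{2134,3241\},\pi)\supseteq S,\ \pi_1=a,\ \pi_n=b\}}$, and similarly $N'(S)$ on the primed side. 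I want to isolate permutations whose occurrence set is \emph{exactly} $\{n-3\}$ and for which that occurrence is a $2134$. The natural move is to restrict attention to subsets $S$ containing $n-3$ (forcing the end occurrence) and use inclusion-exclusion over the remaining possible positions in $[n-4]$ to subtract off the permutations with extra occurrences.

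The main step is to set up the sieve so that both sides are governed by the \emph{same} signed combination of the $N(S)$ and $N'(S)$ respectively, and then invoke Lemma~\ref{lem:moccurrences} termwise. First I would write
\[
\card{\{\pi\in\Av_n^{2134}(2134,3241):\pi_1=a,\pi_n=b,\ \st(\pi_{n-3}\dots\pi_n)=2134\}}
=\sum_{S\ni n-3}(-1)^{\card{S}-1}N(S),
\]
where the sum is over $S\subseteq[n-3]$ containing $n-3$; this is the standard inclusion-exclusion identity that extracts permutations whose marked-position set is exactly $\{n-3\}$ from the superset counts $N(S)$ (the sign $(-1)^{\card S-1}$ and the constraint $n-3\in S$ together realize the indicator of ``$\Em=\{n-3\}$''). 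The analogous identity holds on the primed side with $2314$ in place of $2134$. Now Lemma~\ref{lem:moccurrences} supplies, for each fixed $S$, a bijection preserving $\pi_1=a$ and $\pi_n=b$, hence $N(S)=N'(S)$; and its final clause guarantees that when $n-3\in S$, the end occurrence is $2134$ on the left exactly when it is $2314$ on the right. Therefore the two signed sums agree term by term, and since the left-hand sum equals precisely $\card{\{\pi\in\Av_n^{2134}(2134,3241):\pi_1=a,\pi_n=b\}}$ (a permutation in $\Av_n^{2134}(\cdot)$ automatically ends in $2134$), and likewise for the primed side, the lemma follows.

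The delicate point, and the one I expect to be the main obstacle, is verifying that the inclusion-exclusion is set up correctly so that it genuinely isolates ``$\Em=\{n-3\}$ with the end occurrence equal to $2134$'' rather than some nearby condition. In particular one must confirm that the sets in Lemma~\ref{lem:moccurrences} are indexed by \emph{all} of $[n-3]$ (so every potentially-occupied position is accounted for) and that the classical Möbius/inclusion-exclusion inversion over subsets of $[n-3]$ containing $n-3$ produces the exact-set indicator with the stated signs; a sign error or an off-by-one in which positions may carry occurrences would break the termwise cancellation. I would also double-check the edge cases where $S=\{n-3\}$ (the main ``diagonal'' term, giving the permutations that genuinely end in the pattern and avoid it elsewhere) and where the summand $N(S)$ vanishes because $S$ has two consecutive entries, since the patterns are non-$3$-overlapping; these vanishing terms must match on both sides, which they do because Lemma~\ref{lem:moccurrences} already observes the same overlap obstruction for $2314,3421$. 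Once the combinatorial bookkeeping is pinned down, the rest is a direct appeal to Lemma~\ref{lem:moccurrences}.
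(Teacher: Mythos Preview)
Your approach is essentially the same as the paper's: inclusion--exclusion over subsets $S\subseteq[n-3]$ containing $n-3$, with Lemma~\ref{lem:moccurrences} supplying the termwise equality. The paper packages this by defining $f_{\ge}(S)$ (your $N(S)$ with one extra constraint) and $f_{=}(S)$, then applies M\"obius inversion.

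There is one bookkeeping slip to fix. As you defined it, $N(S)$ does \emph{not} include the constraint $\st(\pi_{n-3}\pi_{n-2}\pi_{n-1}\pi_n)=2134$, so the inclusion--exclusion
\[
\sum_{\substack{S\subseteq[n-3]\\ n-3\in S}}(-1)^{\card{S}-1}N(S)
\]
isolates all $\pi$ with $\Em(\{2134,3241\},\pi)=\{n-3\}$, $\pi_1=a$, $\pi_n=b$; this includes permutations ending in $3241$ as well as in $2134$. Hence your displayed identity, whose left side has the extra condition $\st(\pi_{n-3}\dots\pi_n)=2134$, is not correct as written. The fix is exactly what the paper does: build the end-type constraint into the superset counts by setting
\[
f_{\ge}(S)=\card{\{\pi\in\S_n:\Em(\{2134,3241\},\pi)\supseteq S,\ \pi_1=a,\ \pi_n=b,\ \st(\pi_{n-3}\dots\pi_n)=2134\}},
\]
and similarly $g_{\ge}(S)$ with $2314$ on the primed side. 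Lemma~\ref{lem:moccurrences} (including its final clause) then gives $f_{\ge}(S)=g_{\ge}(S)$ for every $S\ni n-3$, and your inclusion--exclusion now yields $f_{=}(\{n-3\})=g_{=}(\{n-3\})$, which is precisely the lemma. You clearly had this in mind when you invoked the ``final clause''; the only issue is that the constraint must live inside $N(S)$ for the sieve identity to hold.
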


\begin{proof}
For any set $\{n-3\}\subseteq S\subseteq[n-3]$, let 
\begin{align*}
f_=(S)&=\card{\{\pi\in\S_n:\Em(\{2134,3241\},\pi)=S, \pi_1=a,\pi_n=b, \st(\pi_{n-3}\pi_{n-2}\pi_{n-1}\pi_n)=2134\}},\\
f_\ge(S)&=\card{\{\pi\in\S_n:\Em(\{2134,3241\},\pi)\supseteq S, \pi_1=a,\pi_n=b, \st(\pi_{n-3}\pi_{n-2}\pi_{n-1}\pi_n)=2134\}},\\
g_=(S)&=\card{\{\pi'\in\S_n:\Em(\{2314,3421\},\pi')=S, \pi'_1=a,\pi'_n=b, \st(\pi'_{n-3}\pi'_{n-2}\pi'_{n-1}\pi'_n)=2314\}},\\
g_\ge(S)&=\card{\{\pi'\in\S_n:\Em(\{2314,3421\},\pi')\supseteq S, \pi'_1=a,\pi'_n=b, \st(\pi'_{n-3}\pi'_{n-2}\pi'_{n-1}\pi'_n)=2314\}}.
\end{align*}

Since $f_\ge(S)=\sum_{T\supseteq S}f_=(S)$ and $g_\ge(S)=\sum_{T\supseteq S}g_=(S)$, the Principle of Inclusion-Exclusion \cite[Theorem 2.1.1]{EC1} implies that
\[f_=(S)=\sum_{T\supseteq S}(-1)^{\card{T\setminus S}}f_\ge(T) \quad\text{and}\quad g_=(S)=\sum_{T\supseteq S}(-1)^{\card{T\setminus S}}g_\ge(T).\] 
By Lemma~\ref{lem:moccurrences}, we know that $f_\ge(S)=g_\ge(S)$ for all $\{n-3\}\subseteq S\subseteq[n-3]$.
We conclude that
$f_=(S)=g_=(S)$ for all $\{n-3\}\subseteq S\subseteq[n-3]$.
In particular, taking $S=\{n-3\}$, we have $f_=(\{n-3\})=g_=(\{n-3\})$.
Finally, note that
\begin{align*} 
f_=(\{n-3\})&=\card{\{\pi\in\Av_n^{2134}(2134,3241):\pi_1=a,\pi_n=b\}},\\
g_=(\{n-3\})&=\card{\{\pi\in\Av_n^{2314}(2314,3421):\pi_1=a,\pi_n=b\}}.\qedhere
\end{align*}
\end{proof}

We can now finally prove that $2134$ and $3241$ form a tied pair.

\begin{theorem}\label{thm:2134_3241}
We have $$2134\equiv3241.$$
\end{theorem}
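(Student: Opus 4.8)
The plan is to deduce $2134\equiv3241$ from the two preceding lemmas, together with the symmetry Observation~\ref{obs}. First I would recall from Theorem~\ref{thm:PrGF} that
\[
\Pr(\sigma\prec\tau)=\sum_{n\ge4}\frac{\card{\Av_n^\sigma(\sigma,\tau)}}{n!},
\]
so it suffices to compare the sizes of $\Av_n^{2134}(2134,3241)$ and $\Av_n^{3241}(2134,3241)$ for every $n$. Rather than attempt a direct bijection between these two sets (which, as the surrounding text warns, does not seem to exist naturally), I would route the argument through the complementary patterns. Since taking complements sends $2134$ to $3421$ and $3241$ to $2314$, part~2 of Observation~\ref{obs} gives
\[
\Pr(2134\prec3241)=\Pr(3421\prec2314),
\]
and equivalently $\card{\Av_n^{2134}(2134,3241)}=\card{\Av_n^{3421}(3421,2314)}$ for all $n$, by the complementation bijection $\pi\mapsto\comp\pi$ applied entrywise.

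The crux is Lemma~\ref{lem:avoidboth}, which I would apply after summing over the possible first and last entries. Summing the identity in Lemma~\ref{lem:avoidboth} over all $a,b\in[n]$ yields
\[
\card{\Av_n^{2134}(2134,3241)}=\card{\Av_n^{2314}(2314,3421)}
\]
for every $n$. Here the key subtlety, already handled inside Lemma~\ref{lem:avoidboth}, is that the bijection of Lemma~\ref{lem:moccurrences} preserves $\pi_1$ and $\pi_n$ and respects the condition that the terminal occurrence be $2134$ (resp.\ $2314$); this is exactly what lets the inclusion-exclusion collapse to the singleton set $S=\{n-3\}$, which encodes precisely the requirement that the pattern occur at the very end and nowhere else. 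I would therefore emphasize that the real work has already been done in Lemma~\ref{lem:moccurrences} and Lemma~\ref{lem:avoidboth}; the present theorem is the harvesting step.

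Combining these pieces, I would observe that $\Av_n^{3421}(3421,2314)=\Av_n^{2314}(2314,3421)$ as sets for each $n$ (they are the same collection of permutations—those ending in one of the two patterns and avoiding both elsewhere—and the superscript merely records which terminal pattern we count). Hence, chaining the three equalities,
\[
\card{\Av_n^{3241}(2134,3241)}
=\card{\Av_n^{2314}(2134,3241)}'
\]
should be unwound carefully: the cleanest route is to note $\Pr(2134\prec3241)=\Pr(3421\prec2314)$ by complementation, then $\Pr(3421\prec2314)=1-\Pr(2314\prec3421)$ by part~1 of Observation~\ref{obs}, and finally Lemma~\ref{lem:avoidboth} (summed over $a,b$) gives $\Pr(2134\prec3241)=\Pr(2314\prec3421)$. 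Setting $p=\Pr(2134\prec3241)$ and $q=\Pr(2314\prec3421)$, I would arrive at the two relations $p=1-q$ and $p=q$, forcing $p=q=\tfrac12$, which is exactly $2134\equiv3241$. The main obstacle is simply bookkeeping the several pattern-relabelings (complement versus the $2314/3421$ detour) so that the symmetry identities and the counting lemma point in consistent directions; no further combinatorial construction is needed beyond what the lemmas supply.
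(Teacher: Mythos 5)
Your proof is correct and takes essentially the same route as the paper: both rest on Lemma~\ref{lem:avoidboth} summed over $a,b\in[n]$ combined with complementation, the paper applying the complement at the set level to conclude $\card{\Av_n^{2134}(2134,3241)}=\card{\Av_n^{3241}(2134,3241)}$ and finishing via Corollary~\ref{cor:tie}, whereas you apply it at the probability level and solve the system $p=q$, $p=1-q$ obtained from Observation~\ref{obs}. One remark: your intermediate claim that $\Av_n^{3421}(3421,2314)=\Av_n^{2314}(2314,3421)$ as sets is false---the superscript specifies which pattern occupies the final four positions, so these two sets are disjoint for $n\ge 4$---but this does not damage your argument, since you discard that chain in favor of the ``cleanest route,'' which is self-contained and valid.
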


\begin{proof}
    Taking complements of the permutations in the right-hand-side of Lemma~\ref{lem:avoidboth}, we see that, for any $a,b\in[n]$, 
  \begin{multline*}
	\card{\{\pi\in\Av_n^{2134}(2134,3241):\pi_1=a,\pi_n=b\}}\\
	=\card{\{\pi'\in\Av_n^{3241}(2134,3241):\pi'_1=n+1-a,\pi'_n=n+1-b\}}.
\end{multline*}  
Summing over all $a,b\in[n]$, we obtain 
\[\card{\Av_n^{2134}(2134,3241)}=\card{\Av_n^{3241}(2134,3241)},\]
and the result now follows from Corollary~\ref{cor:tie}.
\end{proof}


\section{Non-transitivity and a conjectural winning strategy}
\label{Sec:Conjecture}

A surprising feature of the original Penney's game on words is its non-transitivity, meaning that there are cycles in the directed graph of {\em beater} relations, i.e., the graph on binary words of length $k$ where there is an edge from $v$ to $w$ if, among these two words, $v$ is more likely to appear first in a random sequence of coin flips. In fact, every vertex in this graph has incoming edges~\cite{Felix,GuibasOdlyzko}. 

It follows from Tables~\ref{tab:length3} and~\ref{tab:length4} that the permutation analogue of Penney's game is also non-transitive, and that, at least for $3\le k\le 5$, every pattern of length $k$ is beaten by some other pattern, where we say that $\tau$ {\em beats} $\sigma$ if $\Pr(\sigma\prec\tau)<\frac12$. 

Figure~\ref{Fig:3} shows the graph of beater relations among patterns of length~$3$, which has a directed edge from $\tau$ to $\sigma$ if $\tau$ beats $\sigma$. This edge is a solid edge if $\tau$ is the {\em best beater} for $\sigma$, meaning that it minimizes $\Pr(\sigma\prec\tau)$ over all $\tau$ of the same length as~$\sigma$.

\begin{figure}[htb]
\centering
\begin{tikzpicture}[scale=.9]
\node (123) at (0,1) {$123$};
\node (312) at (0,-1) {$312$};
\node (231) at (1.5,0) {$231$};
\node (321) at (3,1) {$321$};
\node (132) at (3,-1) {$132$};
\node (213) at (4.5,0) {$213$};
\draw[->,thick] (123)--(231);
\draw[->,thick] (231)--(312);
\draw[->,thick] (312)--(123);
\draw[->,thick] (321)--(213);
\draw[->,thick] (213)--(132);
\draw[->,thick] (132)--(321);
\draw[->,thick,dotted] (231)--(321);
\draw[->,thick,dotted] (231)-- (132);
\draw[->,thick,dotted] (213)++(0,.4) to [bend right =40] (123);
\draw[->,thick,dotted] (213)++(0,-.4) to [bend left =40] (312);
\end{tikzpicture}
\caption{The beater relations among patterns in $\S_3$. Solid edges indicate best beaters.}
\label{Fig:3}
\end{figure}
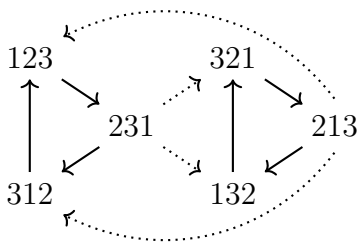

A {\em winning strategy} for player~B is a method that, given any choice of $\sigma\in\S_k$ for player~A, produces a pattern $\tau\in\S_k$ that beats $\sigma$.
For the original Penney's game on words, it was shown by Guibas--Odlyzko~\cite{GuibasOdlyzko} and Felix~\cite{Felix} that, if player~A chooses a word $a_1\dots a_k$, then there is a winning strategy for player~B which consists of choosing a word of the form $ba_1\dots a_{k-1}$ for some suitable $b$. More generally, a description of the values of $b$ that maximize the winning probability for player~B was given in~\cite{Felix} for words over any finite alphabet. We conjecture that a similar winning strategy exists for our permutation version of Penney's game.

\begin{conjecture}\label{conj}
	For any $k\ge3$ and any $\sigma=\sigma_1\dots\sigma_{k-1}\sigma_k\in\S_k$, the permutation $\tau=\sigma_k\sigma_1\dots\sigma_{k-1}$ satisfies that
 \[\Pr(\sigma\prec\tau)<\frac12,\]
 thus giving a winning strategy for player~B.
\end{conjecture}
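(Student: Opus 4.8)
The plan is to reduce the conjecture to a counting comparison and then exploit the one structural feature that singles out the rotation $\tau=\sigma_k\sigma_1\dots\sigma_{k-1}$. By Theorem~\ref{thm:PrGF} we have $\Pr(\sigma\prec\tau)=\sum_{n\ge k}|\Av_n^\sigma(\sigma,\tau)|/n!$ and $\Pr(\tau\prec\sigma)=\sum_{n\ge k}|\Av_n^\tau(\sigma,\tau)|/n!$, and these two sums add to $1$ since $\sigma$ and $\tau$ are incomparable. Hence it suffices to prove that $|\Av_n^\sigma(\sigma,\tau)|\le|\Av_n^\tau(\sigma,\tau)|$ for every $n$, with strict inequality for at least one $n$. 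The feature to exploit is that $\tau_2\dots\tau_k=\sigma_1\dots\sigma_{k-1}$ as sequences, so $\st(\tau_2\dots\tau_k)=\st(\sigma_1\dots\sigma_{k-1})$ and therefore $k-1\in\cB_{\tau,\sigma}$: the length-$(k-1)$ suffix of $\tau$ is order-isomorphic to the length-$(k-1)$ prefix of $\sigma$. This is exactly the permutation analogue of the maximal prefix/suffix overlap that underlies the Guibas--Odlyzko and Felix winning words $ba_1\dots a_{k-1}$, and morally it is the term that, through Conway's formula (Theorem~\ref{thm:Conway}), tilts the word game in favor of $\tau$; since no Conway formula is available for permutations, the overlap must instead be cashed in combinatorially.

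Second, I would build the comparison around the natural rotation map $R$ that sends a permutation whose last $k$ entries form $\sigma$ to the one obtained by moving the last entry $\pi_n$ into position $n-k+1$ (shifting $\pi_{n-k+1}\dots\pi_{n-1}$ one step to the right). Because the block $\pi_{n-k+1}\dots\pi_n$ is an occurrence of $\sigma_1\dots\sigma_k$, the rotated block $\pi_n\pi_{n-k+1}\dots\pi_{n-1}$ is an occurrence of $\sigma_k\sigma_1\dots\sigma_{k-1}=\tau$, so $R$ is a bijection from the set of length-$n$ permutations ending in $\sigma$ onto those ending in $\tau$. It changes only the last $k$ positions, so it can create or destroy an occurrence of $\sigma$ or $\tau$ only within the last $2k-1$ entries. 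If $R$ always mapped $\Av_n^\sigma(\sigma,\tau)$ into $\Av_n^\tau(\sigma,\tau)$ we would be done up to strictness; the obstruction, and the reason equality fails, is that the heavy overlap $\{1,k-1\}\subseteq\cB_{\tau,\sigma}$ means the relocated entry can complete a \emph{new} occurrence of $\sigma$ or $\tau$ just before the final block. This is precisely the phenomenon that the clean bijective ties of Section~\ref{Sec:GeneralProbability} (Theorems~\ref{thm:NoOverlap} and~\ref{thm:1k2k-1general}) avoid by hypothesis and that forces an imbalance here.

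To control these spurious occurrences I would imitate the inclusion--exclusion machinery of Section~\ref{Sec:Length4}: rather than comparing the avoidance classes directly, pass to the super-counts of permutations that \emph{contain} $\sigma$ (resp.\ $\tau$) at a prescribed marked set $S$ of positions, organize the overlapping marked occurrences into tight clusters as in Lemma~\ref{lem:moccurrences}, and recover the avoidance counts by Möbius inversion as in Lemma~\ref{lem:avoidboth}. The goal is a signed identity exhibiting $|\Av_n^\tau(\sigma,\tau)|-|\Av_n^\sigma(\sigma,\tau)|$ as a nonnegative sum, with the strict surplus coming from those $\tau$-ending avoiders whose final $\tau$ is created ``from scratch'' (no admissible entry in position $n-k$), which have no $R$-preimage in $\Av_n^\sigma(\sigma,\tau)$; the smallest such $n$ should already give strict inequality and match the sign pattern in Table~\ref{tab:length4}.

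The hard part will be the bookkeeping for a completely arbitrary $\sigma$: the sets $\cB_{\sigma,\sigma}$, $\cB_{\tau,\tau}$, $\cB_{\sigma,\tau}$ and $\cB_{\tau,\sigma}$ can all be nontrivial and depend intricately on $\sigma$, so one must rule out that the reverse overlaps (encoded in $\cB_{\sigma,\tau}$) ever hand $\sigma$ a compensating advantage. Proving the inequality \emph{uniformly} in $\sigma$ is where a case-free argument is needed, and I expect the cleanest conceptual route is to reformulate everything as a first-passage comparison via Theorem~\ref{thm:precEF}, showing $\EF_{\tau\to\sigma}-\EF_{\sigma\to\tau}<2(\ET_\sigma-\ET_\tau)$; the renewal interpretation is transparent --- completing $\tau$ leaves the sequence already displaying the prefix $\st(\sigma_1\dots\sigma_{k-1})$ of $\sigma$, so $\tau$ consistently ``leads into'' $\sigma$ but not conversely --- but turning this renewal picture into usable bounds on the $\EF$'s for general $\sigma$ is the central difficulty, since closed forms for $\EF_{\sigma\to\tau}$ are available only in the special cases of Proposition~\ref{prop:EFiota21}.
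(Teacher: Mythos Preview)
This statement is a \emph{conjecture} in the paper, not a theorem: the paper offers no proof of it. The authors only remark that it holds for $k=3$ by inspection of Table~\ref{tab:length3}, and that numerical estimates (truncating the series in Theorem~\ref{thm:PrGF} at $n=11$) support it for $k=4$ and $k=5$. There is therefore no ``paper's own proof'' to compare your proposal against.

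Your proposal is not a proof either, and you are candid about this: you identify a plausible line of attack (the rotation map $R$, the maximal overlap $k-1\in\cB_{\tau,\sigma}$, an inclusion--exclusion correction in the spirit of Section~\ref{Sec:Length4}, or alternatively the renewal inequality via Theorem~\ref{thm:precEF}) and then explicitly flag the step you cannot carry out---controlling the competing overlaps in $\cB_{\sigma,\sigma}$, $\cB_{\tau,\tau}$, $\cB_{\sigma,\tau}$ uniformly in $\sigma$. That is exactly the obstacle; nothing in the paper gets past it, and your sketch does not either. One further caution: your reduction to the termwise inequality $|\Av_n^\sigma(\sigma,\tau)|\le|\Av_n^\tau(\sigma,\tau)|$ for every $n$ is sufficient for the conjecture but strictly stronger, and there is no a priori reason it should hold termwise (the paper's tied-pair bijections all work termwise, but the strict inequalities in Tables~\ref{tab:length3} and~\ref{tab:length4} are only asserted for the sums). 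Before investing in the cluster/inclusion--exclusion machinery, it would be worth checking numerically whether the termwise inequality is even true for all $\sigma\in\S_4$ and $\S_5$.
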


This conjecture holds for $k=3$, as can be seen from Table~\ref{tab:length3} or Figure~\ref{Fig:3}. 
Based on the numerical estimates obtained by truncating the series in Theorem~\ref{thm:PrGF} at $n=11$, the conjecture also holds for $k=4$ and $k=5$.
In many of the cases we tried, this strategy also seems to give the best beater, but not always. For example, for length~4 we found one exception (two if we consider symmetries): letting $\sigma=2341$, the pattern $\tau=1234$ suggested by Conjecture~\ref{conj} satisfies
 \[\Pr(\sigma\prec\tau)\approx0.443,\]
whereas the best beater is $\tau'=2134$, which satisfies
 \[\Pr(\sigma\prec\tau')\approx0.420.\]
It would be interesting to describe an optimal strategy for player~B.

\begin{problem}\label{prob}
	For any $k\ge3$ and any $\sigma\in\S_k$, find a permutation $\tau$ that minimizes $\Pr(\sigma\prec\tau)$.
\end{problem}
For patterns of length~4, the graph of the best beater relations, obtained using numerical estimates, is drawn in Figure~\ref{Fig:4}.

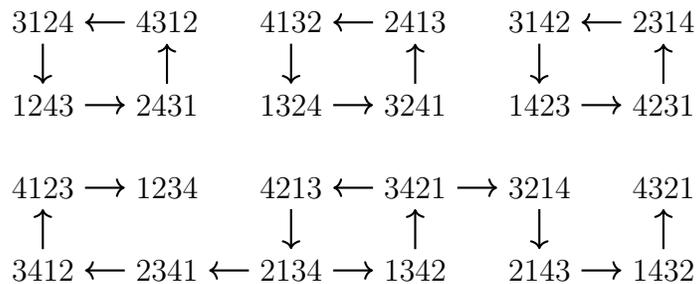
\begin{figure}[htb]
\centering
\begin{tikzpicture}[scale=.55]
\node (1243) at (0,0) {$1243$};
\node (2431) at (3,0) {$2431$};
\node (4312) at (3,2) {$4312$};
\node (3124) at (0,2) {$3124$};
\draw[->,thick] (1243)--(2431);
\draw[->,thick] (2431)--(4312);
\draw[->,thick] (4312)--(3124);
\draw[->,thick] (3124)--(1243);
\begin{scope}[shift={(6,0)}]
\node (1324) at (0,0) {$1324$};
\node (3241) at (3,0) {$3241$};
\node (2413) at (3,2) {$2413$};
\node (4132) at (0,2) {$4132$};
\draw[->,thick] (1324)--(3241);
\draw[->,thick] (3241)--(2413);
\draw[->,thick] (2413)--(4132);
\draw[->,thick] (4132)--(1324);
 \end{scope}
\begin{scope}[shift={(12,0)}]
\node (1423) at (0,0) {$1423$};
\node (4231) at (3,0) {$4231$};
\node (2314) at (3,2) {$2314$};
\node (3142) at (0,2) {$3142$};
\draw[->,thick] (1423)--(4231);
\draw[->,thick] (4231)--(2314);
\draw[->,thick] (2314)--(3142);
\draw[->,thick] (3142)--(1423);
  \end{scope}
\begin{scope}[shift={(6,-4)}]
\node (2134) at (0,0) {$2134$};
\node (1342) at (3,0) {$1342$};
\node (3421) at (3,2) {$3421$};
\node (4213) at (0,2) {$4213$};
\node (2341) at (-3,0) {$2341$};
\node (3412) at (-6,0) {$3412$};
\node (4123) at (-6,2) {$4123$};
\node (1234) at (-3,2) {$1234$};
\node (3214) at (6,2) {$3214$};
\node (2143) at (6,0) {$2143$};
\node (1432) at (9,0) {$1432$};
\node (4321) at (9,2) {$4321$};
\draw[->,thick] (2134)--(1342);
\draw[->,thick] (1342)--(3421);
\draw[->,thick] (3421)--(4213);
\draw[->,thick] (4213)--(2134);
\draw[->,thick] (2134)--(2341);
\draw[->,thick] (2341)--(3412);
\draw[->,thick] (3412)--(4123);
\draw[->,thick] (4123)--(1234);
\draw[->,thick] (3421)--(3214);
\draw[->,thick] (3214)--(2143);
\draw[->,thick] (2143)--(1432);
\draw[->,thick] (1432)--(4321);
 \end{scope}
\end{tikzpicture}
\caption{The best-beater relations among patterns in $\S_4$.}
\label{Fig:4}\end{figure}

We conclude by mentioning several directions for further work. 
In our permutation analogue of Penney's game, we generated permutations by using a sequence $\X$ of i.i.d.\ continuous random variables. However, there are other reasonable models to generate random permutations, which result in different pattern probabilities. For example, one can use a Markov chain to generate each permutation from the previous one by setting the transition probability from $\sigma\in\S_k$ to $\tau\in\S_k$ to be $1/k$ if the last $k-1$ entries of $\sigma$ are in the same relative order as the first $k-1$ entries of $\tau$, and $0$ otherwise. An initial analysis of Penney's game under this model can be found in~\cite{Kathy-thesis}.

Other interesting variations of our permutation analogue of Penney's game arise when using different notions of pattern containment.  One downside of such variations is that, unlike in our version, it is not enough to look at the last $k$ values of the random sequence $\X$ to determine the appearance of a pattern of length $k$. For example, using the classical notion of pattern containment, the game would end when the random sequence $\X$ has any subsequence, not necessarily consecutive, whose entries are in the same relative order as one of the permutations chosen by the players.

Alternatively, one could use a more restrictive notion of pattern containment by requiring both the positions and the values of an occurrence to be consecutive. These are called {\em Hertzsprung} patterns in~\cite{Claesson-H}, {\em rigid} patterns in~\cite{Myers}, and {\em very tight} patterns in~\cite{Bona}. 
Ordinary generating functions for occurrences of such patterns in permutations have been obtained by Claesson~\cite{Claesson-H} by adapting some of the techniques used in the case of words~\cite{GuibasOdlyzko}.
However, a major drawback of Penney's game in this setting is that it is not guaranteed to end. Indeed, the expected time to find the first occurrence of the Hertzsprung pattern $12$ is already infinite. And if $\sigma$ is a Hertzsprung pattern of length at least~3, there is a positive probability that $\sigma$ will never appear at all in the random sequence. Another unusual feature in this setting is that a Hertzsprung pattern that appears in $X_1,X_2,\dots,X_{k}$ may no longer appear in $X_1,X_2,\dots,X_{k+1}$. In particular, the naive generalization of Theorem~\ref{thm:ET} does not work for Hertzsprung patterns.

A hybrid between our permutation version of Penney's game and the original version is obtained by letting the random variables $X_i$ take values in a finite set $[m]$, and allowing patterns to have repeated letters. Our version would then correspond to the case $m\to\infty$.
Finally, one could also consider versions of Penney's game with more than two players.

\acknowledgements
\label{sec:ack}
The authors thank Peter Winkler, Anders Claesson, and some anonymous referees for helpful suggestions. 

\bibliographystyle{plain}
\bibliography{penneys_game}
\label{sec:biblio}

\end{document}